\newtheorem{thm}[equation]{Theorem}
\newtheorem{lem}[equation]{Lemma}
\theoremstyle{definition}
\newtheorem{defn}[equation]{Definition}
\newtheorem{rmk}[equation]{Remark}
\numberwithin{equation}{section}
\newcommand\abs[2][empty]{\csname#1\endcsname \lvert{#2}\csname#1\endcsname\rvert}
\newcommand\doublebar[2][empty]{\csname#1\endcsname \lVert{#2}\csname#1\endcsname\rVert}
\newcommand\mat[1]{\bm{#1}}
\newcommand\arr[1]{\bm{\dot{#1}}}
\newcommand\Div{\mathop{\mathrm{div}}\nolimits}
\newcommand\Tr{\mathop{\smash{\arr{\mathrm{Tr}}}\vphantom{T}}\nolimits}
\newcommand\Trace{\mathop{\mathrm{Tr}}\nolimits}
\newcommand\M{\mathop{\smash{\arr{\mathrm{M}}}\vphantom{M}}\nolimits}
\newcommand\re{\mathop{\mathrm{Re}}\nolimits}
 \let\R\RR
\newcommand\N{\mathbb{N}} 
\newcommand\1{\mathbf{1}}
\newcommand\D{\mathcal{D}}
\newcommand\s{\mathcal{S}}
\newcommand\XX{\mathfrak{X}}
\newcommand\YY{\mathfrak{Y}}
\newcommand\pureH{\parallel}
\newcommand\dmn{{n+1}}
\newcommand\pdmn{{(n+1)}}
\newcommand\dmnMinusOne{n}
\def\citation#1{}
\def\bibcite#1#2{}
\global\def\eqref{\@ifstar\@eqref\@@eqref}
\global\def\@eqref#1{\textup{\tagform@{\ref*{#1}}}}
\global\def\@@eqref#1{\textup{\tagform@{\ref{#1}}}}
\begin{document}

\title[Trace theorems for solutions]{Dirichlet and Neumann boundary values of solutions to higher order elliptic equations}

\author{Ariel Barton}
\address{Ariel Barton, Department of Mathematical Sciences,
			309 SCEN,
			University of Ar\-kan\-sas,
			Fayetteville, AR 72701}
\email{aeb019@uark.edu}

\author{Steve Hofmann}
\address{Steve Hofmann, 202 Math Sciences Bldg., University of Missouri, Columbia, MO 65211}
\email{hofmanns@missouri.edu}
\thanks{Steve Hofmann is partially supported by the NSF grant DMS-1361701.}

\author{Svitlana Mayboroda}
\address{Svitlana Mayboroda, Department of Mathematics, University of Minnesota, Minneapolis, Minnesota 55455}
\email{svitlana@math.umn.edu}
\thanks{Svitlana Mayboroda is partially supported by the NSF CAREER Award DMS 1056004,  the NSF INSPIRE Award DMS 1344235, and the NSF Materials Research Science and Engineering Center Seed Grant.}

\begin{abstract}
We show that if $u$ is a solution to a linear elliptic differential equation of order $2m\geq 2$ in the half-space with $t$-independent coefficients, and if $u$ satisfies certain area integral estimates, then the Dirichlet and Neumann boundary values of $u$ exist and lie in a Lebesgue space $L^p(\mathbb{R}^n)$ or Sobolev space $\dot W^p_{\pm 1}(\mathbb{R}^n)$. Even in the case where $u$ is a solution to a second order equation, our results are new for certain values of~$p$.
\end{abstract}

\keywords{Elliptic equation, higher-order differential equation, Dirichlet boundary values, Neumann boundary values}

\subjclass[2010]{Primary
35J67,  	
Secondary
35J30, 
31B10
}

\maketitle 

\tableofcontents

\section{Introduction}

This paper is part of an ongoing study of elliptic differential operators of the form
\begin{equation}\label{B:eqn:divergence}
Lu = (-1)^m \sum_{\abs{\alpha}=\abs{\beta}=m} \partial^\alpha (A_{\alpha\beta} \partial^\beta u)\end{equation}
for $m\geq 1$, with general bounded measurable coefficients.

Specifically, we consider boundary value problems for such operators. One such problem is the Dirichlet problem
\begin{equation}\label{B:eqn:introduction:Dirichlet}
Lu =0 \text{ in }\Omega,
\qquad
\nabla^{m-1} u= \arr f\text{ on }\partial\Omega\end{equation}
for a specified domain~$\Omega$ and array $\arr f$ of boundary functions. 

We are also interested in the corresponding higher-order Neumann problem, defined as follows. We say that  $Lu=0$ in~$\Omega$ in the weak sense if
\begin{equation*}\sum_{\abs\alpha=\abs\beta=m} \int_\Omega \partial^\alpha\varphi\,A_{\alpha\beta}\,\partial^\beta u=0
\end{equation*}
for all smooth functions $\varphi$ whose support is compactly contained in~$\Omega$.
If $\varphi$ is smooth and compactly supported in $\R^\dmn\supsetneq \Omega$, then the above integral is no longer zero; however, it depends only on $u$ and the behavior of $\varphi$ near the boundary, not the values of $\varphi$ in the interior of~$\Omega$. The Neumann problem with boundary data $\arr g$ is then the problem of finding a function $u$ such that
\begin{equation}\label{B:eqn:introduction:Neumann}
\sum_{\abs\alpha=\abs\beta=m} \int_\Omega \partial^\alpha\varphi\,A_{\alpha\beta}\,\partial^\beta u=\sum_{\abs\gamma=m-1}\int_{\partial\Omega} \partial^\gamma\varphi\,g_\gamma\,d\sigma\quad\text{for all }\varphi\in C^\infty_0(\R^\dmn).\end{equation}
In the second-order case ($m=1$), if $\mat A$ and $\nabla u$ are continuous up to the boundary, then integrating by parts reveals that $g=\nu\cdot\mat A\nabla u$, where $\nu$ is the unit outward normal vector, and so this notion of Neumann problem coincides with the more familiar Neumann problem in the second order case. 

In the higher order case, the Neumann boundary values $\arr g$ of $u$ are a linear operator on $\{\nabla^{m-1} \varphi\big\vert_{\partial\Omega}:\varphi\in C^\infty_0(\R^\dmn)\}$. 
Given a bound on the above integral in terms of, for example, $\doublebar{\nabla^{m-1}\varphi\big\vert_{\partial\Omega}}_ {L^{p'}(\partial\Omega)}$, we may extend $\arr g$ by density to a linear operator on a closed subspace of ${L^{p'}(\partial\Omega)}$; however, gradients of smooth functions are not dense in ${L^{p'}(\partial\Omega)}$, and so $\arr g$ lies not in the dual space $L^p(\partial\Omega)$ but in a quotient space of $L^p(\partial\Omega)$. We refer the interested reader to  \cite{BarM16B,BarHM15p} for further discussion of the nature of higher order Neumann boundary values.

In this paper we will focus on trace results. That is, for a specific class of coefficients~$\mat A$, given a solution $u$ to $Lu=0$ in the upper half-space, and given that a certain norm of $u$ is finite, we will show that the Dirichlet and Neumann boundary values exist, and will produce estimates on the Dirichlet and Neumann boundary values $\arr f$ and $\arr g$ in formulas~\eqref{B:eqn:introduction:Dirichlet} or~\eqref{B:eqn:introduction:Neumann}; specifically, we will find norms of $u$ that force $\arr f$ and $\arr g$ to lie in Lebesgue spaces $L^p(\partial\R^\dmn_+)$ or Sobolev spaces $\dot W^p_{\pm 1}(\partial\R^\dmn_+)$.

These results may be viewed as a converse to the well-posedness results central to the theory; that is, well-posedness results begin with the boundary values $\arr f$ or $\arr g$ and attempt to construct functions $u$ that satisfy the problems~\eqref{B:eqn:introduction:Dirichlet} or~\eqref{B:eqn:introduction:Neumann}. 


We now turn to the specifics of our results.

We will consider solutions $u$ to $Lu=0$ in the upper half-space $\R^\dmn_+$, where $L$ is an operator of the form \eqref{B:eqn:divergence}, with coefficients that are $t$-indepen\-dent in the sense that
\begin{equation}\label{B:eqn:t-independent}\mat A(x,t)=\mat A(x,s)=\mat A(x) \quad\text{for all $x\in\R^n$ and all $s$, $t\in\R$}.\end{equation}
At least in the case of well-posedness results, it has long been known (see \cite{CafFK81}) that some regularity of the coefficients $\mat A$ in formula~\eqref{B:eqn:divergence} is needed. Many important results in the second order theory have been proven in the case of $t$-independent coefficients in the half-space; see, for example, \cite{KenR09,AlfAAHK11,AusAH08,AusAM10,Bar13,AusM14,HofKMP15A,HofKMP15B,HofMitMor15,BarM16A}. The $t$-independent case may also be used as a starting point for certain $t$-dependent perturbations; see, for example, \cite{KenP93,KenP95,AusA11,HofMayMou15}.
In the higher-order case, well posedness of the Dirichlet problem for certain fourth-order differential operators (of a strange form, that is, not of the form~\eqref{B:eqn:divergence}) with $t$-independent coefficients was established in \cite{BarM13}. The theory of boundary value problems for $t$-independent operators of the form \eqref{B:eqn:divergence} is still in its infancy; 
the authors of the present paper have begun its study in the papers \cite{BarHM15p,BarHM17pA} and intend to continue its study in the present paper, in \cite{BarHM17pC}, and in future work.

We will be interested in solutions that satisfy bounds in terms of the Lusin area integral  $\mathcal{A}_2$ given by
\begin{equation}\label{B:eqn:lusin:intro}
\mathcal{A}_2 H(x) = \biggl(\int_0^\infty \int_{\abs{x-y}<t} \abs{H(y,t)}^2\frac{dy\,dt}{\abs{t}^\dmn}\biggr)^{1/2} \quad\text{for $x\in\R^n$}
.\end{equation}

Our main results may be stated as follows.
\begin{thm} \label{B:thm:intro}
Suppose that $L$ is an elliptic operator associated with coefficients $\mat A$ that are $t$-independent in the sense of formula~\eqref{B:eqn:t-independent} and satisfy the ellipticity conditions \eqref{B:eqn:elliptic} and~\eqref{B:eqn:elliptic:bounded}.

If $Lu=0$ in $\R^\dmn_+$, denote the Dirichlet and Neumann boundary values of $u$ by $\Tr_{m-1}^+ u$ and $\M_{\mat A}^+ u$, respectively.

There exist some constants $\varepsilon_1>0$ and $\varepsilon_2>0$ depending only on the dimension $\dmn$ and the constants $\lambda$ and $\Lambda$ in the bounds \eqref{B:eqn:elliptic} and~\eqref{B:eqn:elliptic:bounded} such that the following statements are valid. 
(If $\dmn=2$ or $\dmn=3$ then $\varepsilon_1=\infty$.)

Let $v$ and $w$ be functions defined in $\R^\dmn_+$ such that $Lv=Lw=0$ in $\R^\dmn_+$. 
Suppose that $\mathcal{A}_2(t\nabla^m v)\in L^p(\R^n)$ and $\mathcal{A}_2(t\nabla^m \partial_t w)\in L^p(\R^n)$ for some $1<p<\infty$. If $p>2$, assume in addition that $\nabla^m v\in L^2(\R^n\times(\sigma,\infty))$ and $\nabla^m \partial_\dmn w\in L^2(\R^n\times(\sigma,\infty))$ for all $\sigma>0$. (It is acceptable if the $L^2$ norm approaches infinity as $\sigma\to 0^+$.)

If $p$ lies in the range indicated below, then there exists a constant array $\arr c$ and a function $\tilde w$, with  $L\tilde w=0$ and  $\nabla^m \partial_\dmn \tilde w = \nabla^m \partial_\dmn w$ in $\R^\dmn$, such that the Dirichlet and Neumann boundary values of $ v$ and $\tilde w$ exist in the sense of formulas~\eqref{B:eqn:Dirichlet} and \eqref{B:eqn:Neumann:E} and satisfy the bounds
\begin{align}
\label{B:eqn:introduction:Dirichlet:1}
\doublebar{\Tr_{m-1}^+ v-\arr c}_{L^p(\R^n)} 
&\leq C_p\doublebar{\mathcal{A}_2(t\nabla^m v)}_{L^p(\R^n)}
,&&1<p\leq 2+\varepsilon_1
,\\
\label{B:eqn:introduction:Neumann:1}
\doublebar{\M_{\mat A}^+ v}_{\dot W_{-1}^p(\R^n)} 
&\leq C_p\doublebar{\mathcal{A}_2(t\nabla^m v)}_{L^p(\R^n)}
,&&1<p< \infty
,\\
\label{B:eqn:introduction:Dirichlet:2}
\doublebar{\Tr_{m-1}^+\tilde w}_{\dot W_1^p(\R^n)} 
&\leq C_p\doublebar{\mathcal{A}_2(t\nabla^m \partial_t w)}_{L^p(\R^n)}
,&&1<p\leq 2+\varepsilon_2
,\\
\label{B:eqn:introduction:Neumann:2}
\doublebar{\M_{\mat A}^+\tilde w}_{L^p(\R^n)} 
&\leq C_p\doublebar{\mathcal{A}_2(t\nabla^m \partial_t w)}_{L^p(\R^n)}
,&&1<p\leq 2+\varepsilon_2
.\end{align}


Define
\begin{equation}\label{B:eqn:w:bound}
W_{p,q}(\tau)=\biggl(\int_{\R^n} \biggl(\fint_{B((x,\tau),\tau/2)} \abs{\nabla^m w}^q dx\biggr)^{p/q}\biggr)^{1/p}.
\end{equation}

If for some $q>0$ and some $\tau>0$ we have that $W_{p,q}(\tau)<\infty$,
then $\nabla^m w=\nabla^m \tilde w$. 

If for some $q>0$ we have that $W_{p,q}(\tau)$ is bounded uniformly in $\tau>0$, then we have the estimate
\begin{equation}
\label{B:eqn:introduction:Neumann:2:infinity}
\doublebar{\M_{\mat A}^+ w}_{L^p(\R^n)} 
\leq C_p\doublebar{\mathcal{A}_2(t\nabla^m \partial_t w)}_{L^p(\R^n)} +C_{p,q}\sup_{\tau>0}W_{p,q}(\tau),\quad 1<p<\infty
.\end{equation}
\end{thm}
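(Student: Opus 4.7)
The plan is to study the difference $h := w - \tilde w$ in $\R^\dmn_+$. By the construction of $\tilde w$, we have $\nabla^m \partial_\dmn(w - \tilde w) = 0$, so $\nabla^m h$ is $t$-independent, and $Lh = 0$ in $\R^\dmn_+$. The first statement reduces to showing $\nabla^m h \equiv 0$. For the second, I observe that the Neumann functional in \eqref{B:eqn:Neumann:E} depends on $u$ only through the $m$-th gradient appearing in $\partial^\alpha\varphi\,A_{\alpha\beta}\,\partial^\beta u$, so once $\nabla^m w = \nabla^m \tilde w$, we have $\M_{\mat A}^+ w = \M_{\mat A}^+ \tilde w$, and the bound \eqref{B:eqn:introduction:Neumann:2} for $\tilde w$ transfers to $w$ and produces the area-integral term of \eqref{B:eqn:introduction:Neumann:2:infinity}.

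To prove $\nabla^m h = 0$, I would first transfer the hypothesis to $h$: combining the area integral $\doublebar{\mathcal{A}_2(t\nabla^m\partial_t w)}_{L^p(\R^n)}$ with the boundary estimates \eqref{B:eqn:introduction:Dirichlet:2} and \eqref{B:eqn:introduction:Neumann:2} for $\tilde w$, and interior Caccioppoli plus reverse-Hölder estimates for solutions of $L\tilde w = 0$, one obtains $W_{p,q}^{\tilde w}(\tau) \lesssim \doublebar{\mathcal{A}_2(t\nabla^m\partial_t w)}_{L^p}$ uniformly in $\tau > 0$, so that $W_{p,q}^{h}(\tau) < \infty$ at the given $\tau$. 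Next, the $t$-independence of $\nabla^m h$ forces the polynomial-in-$t$ representation $h(x,t) = a_0(x) + \sum_{j=1}^m (t^j/j!)\,p_j(x)$, where each $p_j$ ($j \geq 1$) is a polynomial in $x$ of degree at most $m-j$ (from the $t$-independence of all mixed $m$-th partials). Only $\nabla_x^m a_0$ and the constant top-order coefficients of the $p_j$ contribute to $\nabla^m h$; any nonzero such constant would give $W_{p,q}^h(\tau) = \infty$ since constants are not in $L^p(\R^n)$. Hence those top coefficients vanish, the $p_j$ do not contribute to $\nabla^m h$, and $\nabla^m h = \nabla_x^m a_0(x)$. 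Extending $\bar h(x,t) := a_0(x) + \sum_{j \geq 1} (t^j/j!) p_j(x)$ to all $t \in \R$ yields a weak solution of $L\bar h = 0$ on $\R^\dmn$ (the equation is already $t$-independent), whose $x$-slice $a_0$ satisfies the strongly elliptic $2m$-th order equation $L_0 a_0 = 0$ on $\R^n$, where $L_0$ is obtained by restricting the tensor $\mat A$ to multi-indices with $\alpha_\dmn = \beta_\dmn = 0$; ellipticity of $L_0$ follows from that of $L$ applied to $t$-independent test functions. Upgrading the $W_{p,q}$-type bound on $\nabla_x^m a_0$ via reverse-Hölder self-improvement places $a_0$ in $\dot W^{m,p}(\R^n)$, and a standard higher-order Liouville principle — no nontrivial $\dot W^{m,p}(\R^n)$-solution to a strongly elliptic divergence-form operator on $\R^n$ — then forces $\nabla_x^m a_0 = 0$.

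For the quantitative estimate \eqref{B:eqn:introduction:Neumann:2:infinity}, the uniform assumption $\sup_\tau W_{p,q}(\tau) < \infty$ certainly implies $W_{p,q}(\tau) < \infty$ at some $\tau$, so the first claim applies and yields $\M_{\mat A}^+ w = \M_{\mat A}^+ \tilde w$; the area-integral term is then immediate from \eqref{B:eqn:introduction:Neumann:2}. The additive $C_{p,q}\sup_\tau W_{p,q}(\tau)$ supplies the local $L^q$-integrability of $\nabla^m w$ needed to verify the pairing \eqref{B:eqn:Neumann:E} directly for $w$ (rather than via the surrogate $\tilde w$), absorbing any slack from the quotient-space representation of Neumann values noted in the introduction. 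The main obstacle is the Liouville step: converting a single-scale $W_{p,q}$-finiteness into the global rigidity $\nabla_x^m a_0 = 0$ requires both reverse-Hölder self-improvement for $L_0$-solutions on $\R^n$ and entire-solution rigidity in $\dot W^{m,p}(\R^n)$, and must be arranged to interact carefully with the higher-order polynomial-in-$t$ structure of $h$ produced by the $t$-independence of $\nabla^m h$.
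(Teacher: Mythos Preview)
Your treatment of the first $W_{p,q}$ statement --- showing that finiteness of $W_{p,q}(\tau)$ for a single $\tau$ forces $\nabla^m h=0$ for $h=w-\tilde w$ --- is essentially the paper's argument. In the paper $\tilde w=w-P$, so your $h$ is precisely the paper's $P$; the structure $P(x,t)=P_1(x,t)+P_2(x)$ with $P_1$ a polynomial of degree at most $m$ is exactly your polynomial-in-$t$ decomposition, and the Liouville step (vanishing of a $\dot W^{m,p}(\R^n)$-solution to the horizontal operator $L_\pureH$) is carried out in the paper at the end of Section~\ref{B:sec:Dirichlet} via Meyers' reverse H\"older inequality on balls $\Delta(0,R)$ and letting $R\to\infty$. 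So on this point you and the paper agree.

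Your approach to the bound~\eqref{B:eqn:introduction:Neumann:2:infinity}, however, has a genuine gap. You propose to derive it from \eqref{B:eqn:introduction:Neumann:2} once $\nabla^m w=\nabla^m\tilde w$, but \eqref{B:eqn:introduction:Neumann:2} is only asserted for $1<p\le 2+\varepsilon_2$, whereas \eqref{B:eqn:introduction:Neumann:2:infinity} is claimed for \emph{all} $1<p<\infty$. For $p>2+\varepsilon_2$ the function $\tilde w$ is not even constructed, so the entire reduction collapses. Moreover, the additive term $C_{p,q}\sup_{\tau>0}W_{p,q}(\tau)$ is not a harmless slack absorber: in the paper's proof (Theorem~\ref{B:thm:Neumann:2} and Lemma~\ref{B:lem:Neumann:regularity}) one pairs $\mat A\nabla^m w$ against $\nabla^m\mathcal E\varphi$, integrates by parts in $t$, and obtains boundary terms at $t=\varepsilon$ and $t=T$ of the form
\[
\int_{\R^n}\partial^\gamma(\tau^{m-j-1}\mathcal Q_\tau^m\psi_j)\,\overline{A_{\gamma\beta}\,\partial^\beta w(\,\cdot\,,\tau)}
\quad\text{evaluated at }\tau\in\{\varepsilon,T\},
\]
which are controlled precisely by $\sup_{\tau>0}\tau^{-1}\doublebar{\mathcal A_2(\1_{\tau/2<t<3\tau/2}t\nabla^m w)}_{L^p(\R^n)}\approx \sup_{\tau>0}W_{p,2}(\tau)$ via Lemma~\ref{B:lem:Neumann:slices}. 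This is where the $W_{p,q}$ term in \eqref{B:eqn:introduction:Neumann:2:infinity} actually originates, and it is the mechanism that allows the estimate to hold in the full range $1<p<\infty$ without any appeal to $\tilde w$ or to~\eqref{B:eqn:introduction:Neumann:2}.
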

Here the $L^p$ and $\dot W^p_{-1}$ norms of the Neumann boundary values are meant in the sense of operators on (not necessarily dense) subspaces of $L^{p'}$ and $\dot W^{p'}_1$, that is, in the sense that
\begin{align*}
\doublebar{\M_{\mat A} v}_{\dot W_{-1}^p(\R^n)}
&=\sup_{\varphi\in C^\infty_0(\R^\dmn)}
\frac{
\abs{\langle \nabla^{m-1} \varphi(\,\cdot\,,0),\M_{\mat A} v\rangle_{\R^n}}}{\doublebar{\nabla^{m-1}\varphi(\,\cdot\,,0)}_{\dot W_1^{p'}(\R^n)}}
,\\
\doublebar{\M_{\mat A} w}_{L^p(\R^n)}
&=\sup_{\varphi\in C^\infty_0(\R^\dmn)}
\frac{
\abs{\langle \nabla^{m-1} \varphi(\,\cdot\,,0),\M_{\mat A} \tilde w\rangle_{\R^n}}}{C\doublebar{\nabla^{m-1}\varphi(\,\cdot\,,0)}_{L^{p'}(\R^n)} }
.\end{align*}

These results are new in the higher order case. In the second order case, the bounds \eqref{B:eqn:introduction:Dirichlet:1}--\eqref{B:eqn:introduction:Neumann:2} are known in the case $p=2$, but are new for certain other values of~$p$.

Specifically, if $L$ and $L^*$ are second order operators that satisfy the De Giorgi-Nash-Moser condition, then the bound \eqref{B:eqn:introduction:Neumann:2:infinity} is new in the case $p>2+\varepsilon$ and the bounds \eqref{B:eqn:introduction:Dirichlet:1} and~\eqref{B:eqn:introduction:Neumann:1} are new in the case $1<p<2-\varepsilon$, where $\varepsilon$ is a positive number related to the exponent in the De Giorgi-Nash condition. If $L$ is an arbitrary second order operator (that is, without the De Giorgi-Nash-Moser condition), then the 
bound~\eqref{B:eqn:introduction:Dirichlet:1} is new in the case $1<p<2$, the bound \eqref{B:eqn:introduction:Dirichlet:2} is new in the case $1<p<2n/(n+2)$, the bound \eqref{B:eqn:introduction:Neumann:1} is new in the cases $1<p<2$ and $2n/(n-2)<p<\infty$, and the bound~\eqref{B:eqn:introduction:Neumann:2} is new in the cases $1<p<2n/(n+2)$ and $2<p<\infty$.


\begin{rmk}
Let $\widetilde N H(x) = \sup\{ \bigl(\fint_{B((y,t),t/2)} \abs{H}^2\bigr)^{1/2}:\abs{x-y}<t\}$ be the modified nontangential maximal function introduced in \cite{KenP93}.
%
Estimates of the form $\doublebar{\widetilde N (\nabla^{m-1}u)}_{L^p(\R^n)}\approx \doublebar{\mathcal{A}_2(t\nabla^m u)}_{L^p(\R^n)}$ for a solution $u$ to $Lu=0$ have played an important role in the theory of boundary value problems. See \cite{Dah80a,DahJK84,DahKPV97,HofKMP15A} for proofs of this equivalence under various assumptions on~$L$. 

This equivalence can be used to solve boundary value problems. In \cite{HofKMP15A}, the authors used this equivalence together with the method of $\varepsilon$-approximability of \cite{KenKPT00} to establish well posedness of the Dirichlet problem with $L^p$ boundary data for second order operators with $t$-independent coefficients. In the higher order case, in \cite{She06B,KilS11A} Kilty and Shen have used this equivalence to prove well posedness of the $L^p$-Dirichlet and $\dot W^q_1$-Dirichlet problems for constant coefficient operators and various ranges of $p$ and~$q$, and in \cite{Ver96} Verchota used this equivalence to prove a maximum principle in three-dimensional Lipschitz domains for constant coefficient elliptic systems.

The results of the present paper constitute a major first step towards proving an estimate of the form $\doublebar{\widetilde N (\nabla^{m-1}u)}_{L^p(\R^n)}\leq C \doublebar{\mathcal{A}_2(t\nabla^m u)}_{L^p(\R^n)}$. Specifically, if $Lu=0$ in $\R^\dmn_+$ and $\nabla^m u\in L^2(\R^\dmn_+)$, then we will see (formula~\eqref{B:eqn:green} below) that 
\[\nabla^m u=-\nabla^m \D^{\mat A}(\Tr_{m-1}^+  u) + \nabla^m \s^{L}(\M_{\mat A}^+  u) \]
where $\D^{\mat A}$ and $\s^L$ denote the double and single layer potentials. This Green's formula will be extended to solutions $u$ that satisfy $\mathcal{A}_2(t\nabla^m u)\in L^2(\R^n)$ in \cite{BarHM17pC}. In a forthcoming paper \cite{BarHM17pD}, we intend to extend the Green's formula to solutions $u$ with $\mathcal{A}_2(t\nabla^m u)\in L^p(\R^n)$, and to show that the double and single layer potentials satisfy nontangential estimates; combined with Theorem~\ref{B:thm:intro}, this implies the desired estimate $\doublebar{\widetilde N (\nabla^{m-1}u)}_{L^p(\R^n)}\leq C \doublebar{\mathcal{A}_2(t\nabla^m u)}_{L^p(\R^n)}$.


\end{rmk}

We mention some refinements to Theorem~\ref{B:thm:intro}. 

The definition \eqref{B:eqn:Neumann:E} below of Neumann boundary values is somewhat delicate; a more robust formulation of $\M_{\mat A}^+ w$ is stated in Theorem~\ref{B:thm:Neumann:2}. (The delicate formulation is necessary to contend with $v$ in the full generality of Theorem~\ref{B:thm:intro}; however, if $v$ satisfies some additional regularity assumptions, such as $\nabla^m v\in L^2(\R^\dmn_+)$, then the formulation of Neumann boundary values of formula~\eqref{B:eqn:Neumann:E} coincides with more robust formulations. See Section~\ref{B:sec:dfn:Neumann}.)

There is some polynomial $P$ of degree $m-1$ such that $\nabla^{m-1}P=\arr c$. Clearly $\tilde v=v-P$ is also a solution to $L\tilde v=0$ in $\R^\dmn_+$, and $\nabla^m\tilde v=\nabla^m v$ and so $\tilde v$ satisfies the same estimates as~$v$, and furthermore $\M_{\mat A}\tilde v=\M_{\mat A} v$.

Some additional bounds on $\tilde w$ and $\tilde v=v-P$ are stated in Theorems~\ref{B:thm:Dirichlet:1} and~\ref{B:thm:Dirichlet:2}. In particular, we have that
\begin{align*}
\sup_{t>0}\doublebar{\nabla^{m-1} v(\,\cdot\,,t)-\arr c}_{L^p(\R^n)} 
&\leq C_p\doublebar{\mathcal{A}_2(t\nabla^m v)}_{L^p(\R^n)}
,\\
\sup_{t>0}\doublebar{\nabla^{m} \tilde w(\,\cdot\,,t)}_{L^p(\R^n)}
&\leq C_p\doublebar{\mathcal{A}_2(t\nabla^m \partial_t w)}_{L^p(\R^n)}
\end{align*}
and the limits 
\begin{align*}
\lim_{T\to\infty}\doublebar{\nabla^{m-1} v(\,\cdot\,,T)-\arr c}_{L^p(\R^n)} 
&+
\lim_{t\to 0^+}\doublebar{\nabla^{m-1} v(\,\cdot\,,t)-\Tr_{m-1}^+ v}_{L^p(\R^n)} 
=0
,\\
\lim_{T\to\infty}\doublebar{\nabla^{m} \tilde w(\,\cdot\,,T)}_{L^p(\R^n)} 
&+
\lim_{t\to 0^+}\doublebar{\nabla^{m} \tilde w(\,\cdot\,,t)-\Tr_m^+\tilde w}_{L^p(\R^n)} 
=0
\end{align*}
are valid. Notice that an $L^p$ bound on $\nabla^m \tilde w(\,\cdot\,,t)$ is stronger than a $\dot W^p_1$ bound on $\nabla^{m-1} \tilde w(\,\cdot\,,t)$, as the former involves estimates on all derivatives of order $m-1$ while the latter involves only derivatives at least one component of which are tangential to the boundary.


It is clear that $W_{p,p}(\tau)\leq C\sup_{t>0}\doublebar{\nabla^{m}  w(\,\cdot\,,t)}_{L^p(\R^n)}$. In addition, we remark that $W_{p,2}(\tau)\leq \doublebar{\widetilde N(\nabla^m w)}_{L^p(\R^n)}^p$, where $\widetilde N$ is the modified nontangential maximal function introduced in \cite{KenP93}.

We now review the history of such results. The theory of boundary values of harmonic functions may be said to begin with Fatou's celebrated result \cite{Fat06} that if  $f$ is holomorphic in the upper half-plane, and if $\sup_{t>0}\doublebar{f(\,\cdot\,,t)}_{L^p(\R)}$ is finite, then $f$ has Dirichlet boundary values in the sense that there is some $f_0\in L^p(\R)$ such that as $t\to 0^+$, we have $\doublebar{f(\,\cdot\,,t)-f_0}_{L^p(\R)} \to 0$ and $f(x,t)\to f_0(x)$ for almost every $x\in \R$.


If $p\geq 2$, then this result may be extended from holomorphic functions in $\R^2_+$ to harmonic functions in $\R^\dmn_+$ for $n\geq 1$; see \cite{Cal50,Car62}. Furthermore, with slightly stronger assumptions the same result (with $1\leq p\leq \infty$) is true for functions harmonic in Lipschitz domains; see \cite{HunW68,HunW70}.
Specifically, 
let $N$ be the nontangential maximal operator
\begin{equation*}
NH(X)=\sup_{\{Y\in\Omega:\abs{X-Y}<\sqrt{2}\delta(Y)\}} \abs{H(Y)}\quad\text{for $X\in\partial\Omega$.}\end{equation*}
If $u$ is harmonic in a Lipschitz domain~$\Omega$, then at almost every $X\in\partial\Omega$ (with respect to harmonic measure) for which $Nu(X)<\infty$, a pointwise nontangential limit exists (that is, $\lim_{Y\to X} u(Y)$ exists provided we consider only $Y$ in the nontangential cone ${\{Y\in\Omega:\abs{X-Y}<\sqrt{2}\delta(Y)\}}$).
Sets of harmonic measure zero on boundaries of Lipschitz domains have surface measure zero; see \cite{Dah77}.
In particular, if $Nu\in L^p(\partial\Omega)$ then $u$ has nontangential limits almost everywhere; if $f(X)=\lim_{Y\to X\>\text{n.t.}}u(Y)$ then necessarily $\abs{f(X)}\leq Nu(X)$.

In \cite{Dah80a}, Dahlberg showed that if $u$ is harmonic in a bounded Lipschitz domain $\Omega\subset\R^\dmn$, then if $u$ is normalized appropriately we have that
\begin{equation}\label{B:eqn:lusin:nontangential}\doublebar{\mathcal{A}_2^\Omega(\delta\nabla u)}_{L^p(\partial\Omega)}\approx\doublebar{Nu}_{L^p(\partial\Omega)}
,\qquad 0< p< \infty
\end{equation}
where $\mathcal{A}_2^\Omega$ is a variant on the Lusin area integral of formula~\eqref{B:eqn:lusin:intro} appropriate to the domain~$\Omega$.
Thus, Dahlberg's results imply the analogue to the bound \eqref{B:eqn:introduction:Dirichlet:1} (for $0<p<\infty$) in Lipschitz domains for harmonic functions~$v$. Because the gradient of a harmonic function is harmonic, Dahlberg's results also imply the Lipschitz analogue to the bounds \eqref{B:eqn:introduction:Dirichlet:2} and \eqref{B:eqn:introduction:Neumann:2} (with Neumann boundary values $\nu\cdot \nabla w$) for harmonic functions.

Turning to more general second order operators, in \cite{CafFMS81} the results of \cite{HunW68,HunW70} for nontangentially bounded harmonic functions were generalized to the case of nontangentially bounded solutions to $\Div \mat A\nabla u=0$ where $\mat A$ is a real-valued matrix.
The equivalence \eqref{B:eqn:lusin:nontangential} was established in \cite{DahJK84} for such~$u$, provided that the Dirichlet problem with boundary data in $L^p(\partial\Omega)$ is well posed for at least one $p$ with $1<p<\infty$. If the Dirichlet problem is well-posed then $L$-harmonic measure is absolutely continuous with respect to surface measure. Thus, for such coefficients the analogue to the bound \eqref{B:eqn:introduction:Dirichlet:1}, in Lipschitz domains, and for $1<p<\infty$, is valid.

In \cite[Section~3]{KenP93} 
it was shown that if $\Div \mat A\nabla w=0$ in the unit ball, where $\mat A$ is real, and if $\widetilde N(\nabla w)\in L^p(\partial\Omega)$ for $1<p<\infty$, where $\widetilde N$ is a suitable modification of the nontangential maximal function, then the Dirichlet boundary values $w\big\vert_{\partial\Omega}$ lie in the boundary Sobolev space $\dot W^p_1(\partial\Omega)$ and the Neumann boundary values $M_{\mat A}^\Omega w = \nu\cdot \mat A\nabla w$ lie in $L^p(\partial\Omega)$. With some modifications, the requirement that $\mat A$ be real-valued may be dropped (and indeed the same argument is valid for higher order operators). These results are the analogues to the bounds  \eqref{B:eqn:introduction:Dirichlet:2} and~\eqref{B:eqn:introduction:Neumann:2} with nontangential estimates in place of area integral estimates.

Turning to the case of complex coefficients, or the case where well-posedness of the Dirichlet problem is {not} assumed, in \cite[Theorem~2.3(i), (iii)]{AusA11}, the equivalence 
\begin{equation}
\label{B:eqn:lusin:nontangential:2}
\doublebar{\mathcal{A}_2(t\nabla \partial_t w)}_{L^2(\R^n)} \approx
\doublebar{\widetilde N(\nabla w)}_{L^2(\R^n)}
\end{equation}
for solutions $w$ to elliptic equations with $t$-independent coefficients was established; combined with the arguments of \cite{KenP93}, this yields the bounds \eqref{B:eqn:introduction:Dirichlet:2} and~\eqref{B:eqn:introduction:Neumann:2} for $p=2$ and $m=1$. (Under some further assumptions, this equivalence was established in \cite{AlfAAHK11}.) Furthermore, in \cite[Theorem~2.4(i)]{AusA11} the bound \eqref{B:eqn:introduction:Dirichlet:1} was established for general $t$-independent coefficients, again for $p=2$ and $m=1$. These results extend to $t$-dependent operators that satisfy a small (or finite) Carleson norm condition.

The result \eqref{B:eqn:introduction:Neumann:2}, and indeed the Neumann problem with boundary data in negative smoothness spaces, has received little attention to date; most of the known results involve the Neumann problem for inhomogeneous differential equations and the related theory of Neumann boundary value problems with data in fractional smoothness spaces \cite{FabMM98,Zan00,Agr07,Agr09,MitM13A,MitM13B,BarM16A}. However, the Neumann problem with boundary data in the negative Sobolev space $\dot W^p_{-1}(\partial\R^\dmn_+)$ was investigated in \cite{AusM14}; furthermore, as a consequence of \cite[Theorems~1.1--1.2]{AusS14p}, we have the bound \eqref{B:eqn:introduction:Neumann:1} with $m=1$ and $2-\varepsilon<p<2n/(n-2)+\varepsilon$, where $\varepsilon>0$ depends on~$L$, as well as improved ranges of~$p$ for the bounds \eqref{B:eqn:introduction:Dirichlet:1}, \eqref{B:eqn:introduction:Dirichlet:2} and~\eqref{B:eqn:introduction:Neumann:2} with $m=1$. 
(Specifically, the bound \eqref{B:eqn:introduction:Dirichlet:1} was also established for $2-\varepsilon<p<2n/(n-2)+\varepsilon$, and the bounds \eqref{B:eqn:introduction:Dirichlet:2} and~\eqref{B:eqn:introduction:Neumann:2} were established for $2n/(n+2)-\varepsilon<p<2+\varepsilon$. In the case when  $L$ and $L^*$ satisfy the De Giorgi-Nash-Moser estimates, and in a few other special cases, the estimates are valid in the ranges $1<p<2+\varepsilon$  and $2-\varepsilon<p<\infty$. 




We remark that Fatou's theorem, our Theorem~\ref{B:thm:intro}, and many of the other results discussed above, are valid only for solutions to elliptic equations. An arbitrary function that satisfies square function estimates or nontangential bounds need not have a limit at the boundary in any sense. Many of the trace results applied in the higher order theory have been proven in much higher generality. It is well known that if $u$ is \emph{any} function in the Sobolev space $\dot W^p_m(\Omega)$, where $\Omega$ is a bounded Lipschitz domain, $1<p<\infty$ and $m\geq 1$ is an integer, then $\Tr_{m-1}^\Omega u$ lies in the Besov space $\dot B^{p,p}_{1-1/p}(\partial\Omega)$. Similar results are true if $u$ lies in a Besov space $\dot B^{p,q}_{s+1/p}(\Omega)$ (see \cite{JonW84}) or a weighted Sobolev space (see \cite{MazMS10,Kim07,Bar16pB}). These results all yield that the boundary values $\Tr_{m-1}^\Omega u$ lie in a boundary Besov space $\dot B^{p,p}_s(\partial\Omega)$, with smoothness parameter $s$ satisfying $0<s<1$.

Such results, and their (i.e., extension results) have been used to pass between the Dirichlet problem for a homogeneous differential equation and the Dirichlet problem with homogeneous boundary data, that is, between the problems
\begin{align}
\label{B:eqn:Dirichlet:inside}
L u&=H \text{ in }\Omega,
&\nabla^{m-1} u&=0\text{ on }\partial\Omega, 
&\doublebar{ u}_\XX&\leq C \doublebar{ H}_\YY,
\\
\label{B:eqn:Dirichlet:boundary}
L u&=0 \text{ in }\Omega,
&\nabla^{m-1} u&=\arr f \text{ on }\partial\Omega, 
&\doublebar{ u}_\XX&\leq C \doublebar{\arr f}_{\dot B^{p,p}_s(\partial\Omega)}
\end{align}
for some appropriate spaces $\XX$ and~$\YY$. See \cite{AdoP98, Agr07, MazMS10, MitMW11, MitM13A, MitM13B, BreMMM14, Bar16pA, Bar16pB}.

We are interested in the case where the boundary data lies in a Lebesgue space or Sobolev space, that is, where the smoothness parameter is an integer. In this case the obvious associated inhomogeneous problem is ill-posed, even in very nice cases (for example, for harmonic functions in the half-space) and so the arguments involving the inhomogeneous problem~\eqref{B:eqn:Dirichlet:inside} are not available. Furthermore, in this case it generally is necessary to exploit the fact that $u$ is a solution to an elliptic equation, and so the method of proof of Theorem~\ref{B:thm:intro} is completely different.

The outline of this paper is as follows. In Section~\ref{B:sec:dfn} we will define the terminology we will use throughout the paper. In Section~\ref{B:sec:known} we will summarize some known results of the theory of higher order elliptic equations. 
In Section~\ref{B:sec:preliminaries} we will prove a few results that will be of use in both Sections~\ref{B:sec:Dirichlet} and~\ref{B:sec:Neumann}. In particular, we will prove Lemma~\ref{B:lem:control:lusin}, the technical core of our paper.
Finally, we will prove our results concerning Dirichlet boundary values in Section~\ref{B:sec:Dirichlet}, and our results concerning Neumann boundary values in Section~\ref{B:sec:Neumann}; these results will be stated as Theorems~\ref{B:thm:Dirichlet:1}, \ref{B:thm:Dirichlet:2}, \ref{B:thm:Neumann:1} and~\ref{B:thm:Neumann:2}.
We mention that many of the ideas in the present paper come from the proof of the main estimate (3.9) of \cite{HofKMP15B}. The results of the present paper allow for a slightly different approach to proving the results of \cite{HofKMP15B}; see \cite[Remark~\ref*{C:rmk:HofKMP15B}]{BarHM17pC}.

\subsection*{Acknowledgements}
We would like to thank the American Institute of Mathematics for hosting the SQuaRE workshop on ``Singular integral operators and solvability of boundary problems for elliptic equations with rough coefficients,'' and the Mathematical Sciences Research Institute for hosting a Program on Harmonic Analysis,  at which many of the results and techniques of this paper were discussed.

\section{Definitions}\label{B:sec:dfn}

In this section, we will provide precise definitions of the notation and concepts used throughout this paper. 

We mention that throughout this paper, we will work with elliptic operators~$L$ of order~$2m$ in the divergence form \eqref{B:eqn:divergence} acting on functions defined on~$\R^\dmn$.
As usual, we let $B(X,r)$ denote the ball in $\R^n$ of radius $r$ and center $X$. We let $\R^\dmn_+$ and $\R^\dmn_-$ denote the upper and lower half-spaces $\R^n\times (0,\infty)$ and $\R^n\times(-\infty,0)$; we will identify $\R^n$ with $\partial\R^\dmn_\pm$.


If $Q\subset\R^n$ is a cube, we let $\ell(Q)$ be its side-length, and we let $cQ$ be the concentric cube of side-length $c\ell(Q)$. If $E$ is a set of finite measure, we let $\fint_E f(x)\,dx=\frac{1}{\abs{E}}\int_E f(x)\,dx$.

\subsection{Multiindices and arrays of functions}

We will reserve the letters $\alpha$, $\beta$, $\gamma$, $\zeta$ and~$\xi$ to denote multiindices in $\N^\dmn$. (Here $\N$ denotes the nonnegative integers.) If $\zeta=(\zeta_1,\zeta_2,\dots,\zeta_\dmn)$ is a multiindex, then we define $\abs{\zeta}$, $\partial^\zeta$ and $\zeta!$ in the usual ways, as $\abs{\zeta}=\zeta_1+\zeta_2+\dots+\zeta_\dmn$, $\partial^\zeta=\partial_{x_1}^{\zeta_1}\partial_{x_2}^{\zeta_2} \cdots\partial_{x_\dmn}^{\zeta_\dmn}$,
and $\zeta!=\zeta_1!\,\zeta_2!\cdots\zeta_\dmn!$. 

We will routinely deal with arrays $\arr F=\begin{pmatrix}F_{\zeta}\end{pmatrix}$ of numbers or functions indexed by multiindices~$\zeta$ with $\abs{\zeta}=k$ for some~$k\geq 0$.
In particular, if $\varphi$ is a function with weak derivatives of order up to~$k$, then we view $\nabla^k\varphi$ as such an array.

The inner product of two such arrays of numbers $\arr F$ and $\arr G$ is given by
\begin{equation*}\bigl\langle \arr F,\arr G\bigr\rangle =
\sum_{\abs{\zeta}=k}
\overline{F_{\zeta}}\, G_{\zeta}.\end{equation*}
If $\arr F$ and $\arr G$ are two arrays of functions defined in a set $\Omega$ in Euclidean space, then the inner product of $\arr F$ and $\arr G$ is given by
\begin{equation*}\bigl\langle \arr F,\arr G\bigr\rangle_\Omega =
\sum_{\abs{\zeta}=k}
\int_{\Omega} \overline{F_{\zeta}(X)}\, G_{\zeta}(X)\,dX.\end{equation*}


We let $\vec e_j$ be the unit vector in $\R^\dmn$ in the $j$th direction; notice that $\vec e_j$ is a multiindex with $\abs{\vec e_j}=1$. We let $\arr e_{\zeta}$ be the ``unit array'' corresponding to the multiindex~$\zeta$; thus, $\langle \arr e_{\zeta},\arr F\rangle = F_{\zeta}$.

We will let $\nabla_\pureH$ denote either the gradient in~$\R^n$, or the $n$ horizontal components of the full gradient~$\nabla$ in $\R^\dmn$. (Because we identify $\R^n$ with $\partial\R^\dmn_\pm\subset\R^\dmn$, the two uses are equivalent.) If $\zeta$ is a multiindex with $\zeta_\dmn=0$, we will occasionally use the terminology $\partial_\pureH^\zeta$ to emphasize that the derivatives are taken purely in the horizontal directions.

\subsection{Elliptic differential operators}

Let $\mat A = \begin{pmatrix} A_{\alpha\beta} \end{pmatrix}$ be a matrix of measurable coefficients defined on $\R^\dmn$, indexed by multtiindices $\alpha$, $\beta$ with $\abs{\alpha}=\abs{\beta}=m$. If $\arr F$ is an array, then $\mat A\arr F$ is the array given by
\begin{equation*}(\mat A\arr F)_{\alpha} =
\sum_{\abs{\beta}=m}
A_{\alpha\beta} F_{\beta}.\end{equation*}

We will consider coefficients that satisfy the G\r{a}rding inequality
\begin{align}
\label{B:eqn:elliptic}
\re {\bigl\langle\nabla^m \varphi,\mat A\nabla^m \varphi\bigr\rangle_{\R^\dmn}}
&\geq
	\lambda\doublebar{\nabla^m\varphi}_{L^2(\R^\dmn)}^2
	\quad\text{for all $\varphi\in\dot W^2_m(\R^\dmn)$}
\end{align}
and the bound
\begin{align}
\label{B:eqn:elliptic:bounded}
\doublebar{\mat A}_{L^\infty(\R^\dmn)}
&\leq
	\Lambda
\end{align}
for some $\Lambda>\lambda>0$.
In this paper we will focus exclusively on coefficients that are $t$-inde\-pen\-dent, that is, that satisfy formula~\eqref{B:eqn:t-independent}.

We let $L$ be the $2m$th-order divergence-form operator associated with~$\mat A$. That is, we say that $L u=0$ in~$\Omega$ in the weak sense if, for every $\varphi$ smooth and compactly supported in~$\Omega$, we have that
\begin{equation}
\label{B:eqn:L}
\bigl\langle\nabla^m\varphi, \mat A\nabla^m u\bigr\rangle_\Omega
=\sum_{\abs{\alpha}=\abs{\beta}=m}
\int_{\Omega}\partial^\alpha \bar \varphi\, A_{\alpha\beta}\,\partial^\beta u
=
0
.
\end{equation}

Throughout the paper we will let $C$ denote a constant whose value may change from line to line, but which depends only on the dimension $\dmn$, the ellipticity constants $\lambda$ and $\Lambda$ in the bounds \eqref{B:eqn:elliptic} and~\eqref{B:eqn:elliptic:bounded}, and the order~$2m$ of our elliptic operators. Any other dependencies will be indicated explicitly. 

We let  $\mat A^*$ be the adjoint matrix, that is, $A^*_{\alpha\beta}=\overline{A_{\beta\alpha}}$. We let $L^*$ be the associated elliptic operator.


\subsection{Function spaces and boundary data}

Let $\Omega\subseteq\R^n$ or $\Omega\subseteq\R^\dmn$ be a measurable set in Euclidean space. We will let $L^p(\Omega)$ denote the usual Lebesgue space with respect to Lebesgue measure with norm given by
\begin{equation*}\doublebar{f}_{L^p(\Omega)}=\biggl(\int_\Omega \abs{f(x)}^p\,dx\biggr)^{1/p}.\end{equation*}

If $\Omega$ is a connected open set, then we let the homogeneous Sobolev space $\dot W^p_m(\Omega)$ be the space of equivalence classes of functions $u$ that are locally integrable in~$\Omega$ and have weak derivatives in $\Omega$ of order up to~$m$ in the distributional sense, and whose $m$th gradient $\nabla^m u$ lies in $L^p(\Omega)$. Two functions are equivalent if their difference is a polynomial of order~$m-1$.
We impose the norm 
\begin{equation*}\doublebar{u}_{\dot W^p_m(\Omega)}=\doublebar{\nabla^m u}_{L^p(\Omega)}.\end{equation*}
Then $u$ is equal to a polynomial of order $m-1$ (and thus equivalent to zero) if and only if its $\dot W^p_m(\Omega)$-norm is zero. 
We let $L^p_loc(\Omega)$ and $\dot W^p_{k,loc}(\Omega)$ denote functions that lie in $L^p(U)$ (or whose gradients lie in $L^p(U)$) for any bounded open set $U$ with $\overline U\subsetneq\Omega$.

We will need a number of more specialized function spaces.


We will consider functions $u$ defined in $\R^\dmn_\pm$ that lie in tent spaces. 
If $x\in\R^n$ and $a\in\R$, $a\neq 0$, then let $\Gamma_a(x)=\{(y,t)\in\R^\dmn:\abs{x-y}<at\}$. Notice that $\Gamma_a(x)\subset\R^\dmn_+$ if $a>0$ and $\Gamma_a(x)\subset\R^\dmn_-$ if $a<0$. Let
\begin{equation}\label{B:eqn:lusin}
\mathcal{A}_2^a H(x) = \biggl( \int_{\Gamma_a(x)} \abs{H(y,t)}^2\frac{dy\,dt}{\abs{t}^\dmn}\biggr)^{1/2}.
\end{equation}
We will employ the shorthand $\mathcal{A}_2^-=\mathcal{A}_2^{-1}$ and $\mathcal{A}_2=\mathcal{A}_2^+=\mathcal{A}_2^1$.
If the letter $t$ appears in the argument of $\mathcal{A}_2^a$, then it denotes the coordinate function in the $t$-direction.


The case $p=2$ will be of great importance to us; we remark that if $p=2$, then 
\begin{equation}
\label{B:eqn:Lusin:2:norm}
\doublebar{\mathcal{A}_2 H}_{L^2(\R^n)}
=\biggl(c_n  \int_{0}^\infty \int_{\R^n}\abs{H(y,t)}^2\frac{dy\,dt}{t}\biggr)^{1/2}
\end{equation}
where $c_n$ is the volume of the unit disc in $\R^n$.

\subsubsection{Dirichlet boundary data and spaces}

If $u$ is defined in $\R^\dmn_+$, we let its Dirichelt boundary values be, loosely, the boundary values of the gradient $\nabla^{m-1} u$. More precisely, we let the Dirichlet boundary values be the array of functions $\Tr_{m-1}u=\Tr_{m-1}^+ u$, indexed by multiindices $\gamma$ with $\abs\gamma=m-1$, and given by
\begin{equation}
\label{B:eqn:Dirichlet}
\begin{pmatrix}\Tr_{m-1}^+  u\end{pmatrix}_{\gamma}
=f \quad\text{if}\quad
\lim_{t\to 0^+} \doublebar{\partial^\gamma u(\,\cdot\,,t)-f}_{L^1(K)}=0
\end{equation}
for all compact sets $K\subset\R^n$. If $u$ is defined in $\R^\dmn_-$, we define $\Tr_{m-1}^- u$ similarly. We remark that if $\nabla^m u\in L^1(K\times(0,\sigma))$ for any such $K$ and some $\sigma>0$, then $\Tr_{m-1}^+u$ exists, and furthermore $\begin{pmatrix}\Tr_{m-1}^+  u\end{pmatrix}_{\gamma}=\Trace \partial^\gamma u$ where $\Trace$ denotes the traditional trace in the sense of Sobolev spaces.

We will be concerned with boundary values in Lebesgue or Sobolev spaces. However, observe that the different components of $\Tr_{m-1}u$ arise as derivatives of a common function, and thus must satisfy certain compatibility conditions. We will define the Whitney spaces of functions that satisfy these compatibility conditions and have certain smoothness properties as follows.
\begin{defn} \label{B:dfn:Whitney}
Let 
\begin{equation*}\mathfrak{D}=\{\Tr_{m-1}\varphi:\varphi\text{ smooth and compactly supported in $\R^\dmn$}\}.\end{equation*}

We let $\dot W\!A^p_{m-1,0}(\R^n)$ be the completion of the set $\mathfrak{D}$ under the $L^p$ norm. 

We let $\dot W\!A^p_{m-1,1}(\R^n)$ be the completion of $\mathfrak{D}$ under the $\dot W^p_1(\R^n)$ norm, that is, under the norm $\doublebar{\arr f}_{\dot W\!A^p_{m-1,1}(\R^n)}=\doublebar{\nabla_\pureH \arr f}_{L^p(\R^n)}$. 

Finally, we let $\dot W\!A^2_{m-1,1/2}(\R^n)$ be the completion of $\mathfrak{D}$ under the norm 
\begin{equation}
\label{B:eqn:Besov:norm}
\doublebar{\arr f}_{\dot W\!A^2_{m-1,1/2}(\R^n)} = \biggl(\sum_{\abs\gamma=m-1}\int_{\R^n} \abs{\widehat {f_\gamma}(\xi)}^2\,\abs{\xi}\,d\xi\biggr)^{1/2}
\end{equation}
where $\widehat f$ denotes the Fourier transform of~$f$.
\end{defn}

We will see (Section~\ref{B:sec:Dirichlet}) that if $u$ is a solution to the differential equation \eqref{B:eqn:L} in $\R^\dmn_+$, and if $\mathcal{A}_2(t\nabla^m u)\in L^p(\R^n)$ or $\mathcal{A}_2(t\nabla^m \partial_t u)\in L^p(\R^n)$ for some $1<p\leq 2$, then up to a certain additive normalization, $\Tr_{m-1} u$ lies in $\dot W\!A^p_{m-1,0}(\R^n)$ or $\dot W\!A^p_{m-1,1}(\R^n)$. 

The space $\dot W\!A^2_{m-1,1/2}(\R^n)$ is of interest in connection with the theory of boundary value problems with solutions $u$ in $\dot W^2_m(\R^\dmn_+)$, as will be seen in the following lemma. Such boundary value problems may be investigated using the Lax-Milgram lemma, and many useful results may be obtained therefrom. In particular, we will define layer potentials (Section~\ref{B:sec:dfn:potentials}), establish duality results for layer potentials (Lemma~\ref{B:lem:dual:potentials}), and prove the Green's formula~\eqref{B:eqn:green}, in terms of such solutions.

\begin{lem}\label{B:lem:Besov} 
If $u\in \dot W^2_m(\R^\dmn_+)$ then $\Tr_{m-1}^+u\in \dot W\!A^2_{m-1,1/2}(\R^n)$, and furthermore
\begin{equation*}\doublebar{\Tr_{m-1}^+u}_{\dot W\!A^2_{m-1,1/2}(\R^n)}\leq C \doublebar{\nabla^m u}_{L^2(\R^\dmn_+)}.\end{equation*}
Conversely, if $\arr f\in \dot W\!A^2_{m-1,1/2}(\R^n)$, then there is some $F\in \dot W^2_m(\R^\dmn_+)$ such that $\Tr_{m-1}^+F=\arr f$ and such that
\begin{equation*}\doublebar{\nabla^m F}_{L^2(\R^\dmn_+)}\leq C \doublebar{\arr f}_{\dot W\!A^2_{m-1,1/2}(\R^n)}.\end{equation*}
\end{lem}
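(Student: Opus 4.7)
The approach is to prove both bounds on the dense subspace $\mathfrak{D}$ and then pass to the full spaces by density, the key observation being that on the Fourier side the Whitney space $\dot W\!A^2_{m-1,1/2}(\R^n)$ decouples (up to universal constants) into $m$ independent homogeneous Sobolev spaces. For $\arr f = \Tr_{m-1}\varphi \in \mathfrak{D}$ with $\varphi \in C^\infty_c(\R^\dmn)$, set $g_k(x) = \partial_t^k \varphi(x,0)$ for $0 \leq k \leq m-1$; writing each $\gamma$ with $|\gamma|=m-1$ as $(\gamma',\gamma_\dmn)$ with tangential $\gamma'$ of length $m-1-\gamma_\dmn$, we have $f_\gamma = \partial_\pureH^{\gamma'} g_{\gamma_\dmn}$. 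Grouping the Fourier sum in \eqref{B:eqn:Besov:norm} by $k = \gamma_\dmn$ and using the identity $\sum_{|\gamma'|=j}|\xi^{\gamma'}|^2 \approx |\xi|^{2j}$ yields
\[ \|\arr f\|_{\dot W\!A^2_{m-1,1/2}(\R^n)}^2 \;\approx\; \sum_{k=0}^{m-1}\int_{\R^n}|\widehat{g_k}(\xi)|^2\,|\xi|^{2(m-k)-1}\,d\xi \;=\; \sum_{k=0}^{m-1}\|g_k\|_{\dot H^{m-k-1/2}(\R^n)}^2. \]

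For the trace bound, each $v_\gamma = \partial^\gamma u$ with $|\gamma|=m-1$ lies in $\dot W^2_1(\R^\dmn_+)$, so the classical half-space trace theorem $\dot W^2_1(\R^\dmn_+)\to\dot H^{1/2}(\R^n)$ (proved by Plancherel in $x$ together with a one-dimensional estimate in $t$) gives $\|f_\gamma\|_{\dot H^{1/2}}^2 \leq C\|\nabla v_\gamma\|_{L^2(\R^\dmn_+)}^2$; summing over $|\gamma|=m-1$ produces the claimed estimate. For the extension bound I construct $F$ directly on the Fourier side: fix once and for all profile functions $h_k \in C^\infty_c([0,\infty))$ with $h_k^{(j)}(0)=\delta_{jk}$ for $0\leq j\leq m-1$, and set
\[ F(x,t) \;=\; \sum_{k=0}^{m-1}\mathcal{F}^{-1}_\xi\bigl[|\xi|^{-k}\,h_k(t|\xi|)\,\widehat{g_k}(\xi)\bigr](x). \]
The normalization of the $h_k$ forces $\partial_t^j F(\cdot,0) = g_j$ for $0\leq j\leq m-1$, whence $\Tr_{m-1}^+F = \arr f$. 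For each $|\alpha|=m$ decomposed as $\alpha = (\alpha',\alpha_\dmn)$ with tangential $\alpha'$, Plancherel in $x$ and the substitution $s = t|\xi|$ in the $t$-integral show that each summand of $\|\partial^\alpha F\|_{L^2(\R^\dmn_+)}^2$ is bounded by $C\|h_k^{(\alpha_\dmn)}\|_{L^2}^2 \int|\widehat{g_k}|^2|\xi|^{2(m-k)-1}\,d\xi$, so the total is bounded by $\|\arr f\|_{\dot W\!A^2_{m-1,1/2}(\R^n)}^2$. A density argument extends both inequalities from $\mathfrak{D}$ to the full completions.

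The main obstacle is the initial Fourier-side decoupling: the abstract completion $\dot W\!A^2_{m-1,1/2}(\R^n)$ records only a norm, hiding the structural fact that its elements arise from $m$ independent boundary profiles $g_k \in \dot H^{m-k-1/2}(\R^n)$, one for each order of normal derivative. Once this identification is established on $\mathfrak{D}$ and shown to respect the norms on both sides, the lemma reduces to $m$ independent one-dimensional trace and extension estimates, each of which is routine.
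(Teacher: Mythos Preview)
Your proposal is correct and follows essentially the same approach that the paper sketches: the paper does not give a full proof either, but simply notes that the trace result for $m\geq 2$ follows from the $m=1$ case (applied, as you do, to each $\partial^\gamma u$ with $\abs\gamma=m-1$) and that extensions may be constructed using the Fourier transform. Your explicit extension $F(x,t)=\sum_k\mathcal{F}^{-1}_\xi[\,\abs\xi^{-k}h_k(t\abs\xi)\,\widehat{g_k}\,]$ with compactly supported profiles $h_k$ is a minor variant of the paper's operator $\mathcal{E}$ (formula~\eqref{B:eqn:Neumann:extension}, which uses $t^k e^{-(t\abs\xi)^{2m}}$ in place of $\abs\xi^{-k}h_k(t\abs\xi)$), but the verification via Plancherel and the scaling substitution $s=t\abs\xi$ is identical in spirit.
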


If $\dot W^2_m(\R^\dmn_+)$ and $\dot W\!A^2_{m-1,1/2}(\R^n)$ are replaced by their inhomogeneous counterparts, then this lemma is a special case of \cite{Liz60}. For the homogeneous spaces that we consider, the $m=1$ case of this lemma is a special case of \cite[Section~5]{Jaw77}. The trace result for $m\geq 2$ follows from the trace result for $m=1$;  extensions may easily be constructed using the Fourier transform.

\begin{rmk} This notion of Dirichlet boundary values may require some explanation. Most known results (see, for example, \cite{Ver90,PipV95B,MitM13A}) establish well-posedness of the Dirichlet problem for an elliptic differential operator of order $2m$ in the case where the Dirichlet boundary values of $u$ are taken to be $u\vert_{\partial\Omega}$, $\partial_\nu u\vert_{\partial\Omega}$, $\partial_\nu^2 u\vert_{\partial\Omega},\dots,\partial_\nu^{m-1} u\vert_{\partial\Omega}$, where $\partial_\nu$ denotes derivatives taken in the direction normal to the boundary. (Indeed the analogue to our Lemma~\ref{B:lem:Besov} in \cite{Liz60} is stated in this fashion.) 

If $\partial\Omega$ is connected, then up to adding polynomials, it is equivalent to specify the full gradient $\nabla^{m-1} u$ on the boundary. 
We prefer to specify $\Tr_{m-1}u=\Trace\nabla^{m-1}u$ rather than the array of functions $u\vert_{\partial\Omega}$, $\partial_\nu  u\vert_{\partial\Omega},\dots,\partial_\nu^{m-1} u\vert_{\partial\Omega}$ for reasons of homogeneousness. That is, we often expect all components of $\nabla^{m-1}u$ to exhibit the same degree of smoothness. This can be reflected by requiring all components of $\Tr_{m-1} u$ to lie in the same smoothness space, but the lower-order derivatives $u\vert_{\partial\Omega}$, $\partial_\nu u\vert_{\partial\Omega},\dots,\partial_\nu^{m-2} u\vert_{\partial\Omega}$ would have to lie in higher smoothness spaces. This is notationally awkward in $\R^\dmn_+$; furthermore, we hope in future to generalize to Lipschitz domains, in which case higher order smoothness spaces on the boundary are extremely problematic.
\end{rmk}

\subsubsection{Neumann boundary data}
\label{B:sec:dfn:Neumann}

It is by now standard to define Neumann boundary values in a variational sense. 

That is, suppose that  $u\in \dot W^2_m(\R^\dmn_+)$ and that $Lu=0$ in $\R^\dmn_+$. By the definition~\eqref{B:eqn:L} of~$Lu$, if $\varphi$ is smooth and supported in $\R^\dmn_+$, then $\langle \nabla^m\varphi,\mat A\nabla^m u \rangle_{\R^\dmn_+}=0$. By density of smooth functions and boundedness of the trace map, we have that $\langle \nabla^m\varphi,\mat A\nabla^m u \rangle_{\R^\dmn_+}=0$ for any $\varphi\in \dot W^2_m(\R^\dmn_+)$ with $\Tr_{m-1}^+\varphi=0$. Thus, if $\Psi\in \dot W^2_m(\R^\dmn_+)$, then $\langle \nabla^m\Psi,\mat A\nabla^m u\rangle_{\R^\dmn_+}$ depends only on $\Tr_{m-1}^+\Psi$. 

Thus, for solutions $u$ to $Lu=0$ with $u\in\dot W^2_m(\R^\dmn_+)$, we may define the Neumann boundary values $\M_{\mat A}^+ u$ by the formula
\begin{equation}\label{B:eqn:Neumann:W2}
\langle \Tr_{m-1}^+\Psi,\M_{\mat A}^+ u\rangle_{\R^n}
=
\langle \nabla^m\Psi,\mat A\nabla^m u\rangle_{\R^\dmn_+} 
\quad\text{for any $\Psi\in \dot W^2_m(\R^\dmn)$}.
\end{equation}
See \cite{BarM16B,BarHM15p} for a much more extensive discussion of higher order Neumann boundary values. 

We are interested in the Neumann boundary values of a solution $u$ to $Lu=0$ that satisfies $\mathcal{A}_2(t\nabla^m u)\in L^p(\R^n)$ or $\mathcal{A}_2(t\nabla^m\partial_t u)\in L^p(\R^n)$. For such functions the inner product \eqref{B:eqn:Neumann:W2} does not converge for arbitrary $\Psi\in \dot W^2_m(\R^\dmn_+)$.

If $\mathcal{A}_2(t\nabla^m u)\in L^2(\R^n)$, then $\nabla^m u$ is not even locally integrable near the boundary (see formula~\eqref{B:eqn:Lusin:2:norm}), and so the inner product~\eqref{B:eqn:Neumann:W2} will not in general converge even for smooth functions~$\Psi$ that are compactly supported in $\R^\dmn$. However, we will see (Section~\ref{B:sec:Neumann}) that for any $\arr \psi$ in the dense subspace $\mathfrak{D}$ of Definition~\ref{B:dfn:Whitney}, there is some extension $\Psi$ of $\arr \psi$ such that the inner product~\eqref{B:eqn:Neumann:W2} converges (albeit possibly not absolutely). We will thus define Neumann boundary values in terms of a distinguished extension.


Define the operator $\mathcal{Q}_t^m$ by
\begin{equation*}\mathcal{Q}_t^m = e^{-(-t^2\Delta_\pureH)^m}.\end{equation*}
Notice that if $f\in C^\infty_0(\R^n)$, then $\partial_t^k \mathcal{Q}_t^m f(x)\big\vert_{t=0}=0$ whenever $1\leq k\leq 2m-1$, and that $\mathcal{Q}_0^m f(x)=\varphi(x)$.

Suppose that $\varphi$ is smooth and compactly supported in $\R^\dmn$. Let $\varphi_k(x)=\partial_\dmn^k\varphi(x,0)$.
If $t\in\R$, let
\begin{equation}
\label{B:eqn:Neumann:extension}
\mathcal{E}\varphi(x,t) = \mathcal{E}(\Tr_{m-1}\varphi)(x,t) =  \sum_{k=0}^{m-1}\frac{1}{k!}t^k \mathcal{Q}^m_t \varphi_k(x).\end{equation}
Observe that $\mathcal{E}\varphi$ is also smooth on $\overline{\R^\dmn_+}$ up to the boundary, albeit is not compactly supported, and that $\Tr_{m-1}^+\mathcal{E}\varphi=\Tr_{m-1}^-\mathcal{E}\varphi= \Tr_{m-1}\varphi$. 

We define the Neumann boundary values $\M_{\mat A} u=\M_{\mat A}^+ u$ of $u$ by
\begin{equation}\label{B:eqn:Neumann:E}
\langle \M_{\mat A}^+ u,\Tr_{m-1}\varphi\rangle_{\R^n}
=
\lim_{\varepsilon\to 0^+} \lim_{T\to \infty}
\int_\varepsilon^T \langle \mat A\nabla^m u(\,\cdot\,,t), \nabla^m \mathcal{E}\varphi(\,\cdot\,,t)\rangle_{\R^n}\,dt
.\end{equation}
We define $\M_{\mat A}^- u$ similarly, as an appropriate integral from $-\infty$ to zero. Notice that $\M_{\mat A} u$ is an operator on the subspace $\mathfrak{D}$ appearing in Definition~\ref{B:dfn:Whitney}; given certain bounds on~$u$, we will prove boundedness results (see Section~\ref{B:sec:Neumann}) that allow us to extend $\M_{\mat A} u$ to an operator on $\dot W\!A^p_{m-1,0}(\R^n)$ or $\dot W\!A^p_{m-1,1}(\R^n)$ for various values of~$p$.

As mentioned in the introduction, if $\mathcal{A}_2(t\nabla^m u)\in L^p(\R^n)$ then the right-hand side of formula~\eqref{B:eqn:Neumann:W2} does represent an absolutely convergent integral even for $\Psi=\mathcal{E}\Tr_{m-1}^+\Psi$, and so the order of integration in formula~\eqref{B:eqn:Neumann:E} is important.

The two formulas \eqref{B:eqn:Neumann:W2} and~\eqref{B:eqn:Neumann:E} for the Neumann boundary values of a solution in $\dot W^2_m(\R^\dmn_+)$ coincide, as seen in the next lemma.

\begin{lem}\label{B:lem:Neumann:W2} Suppose that $\nabla^m u\in L^2(\R^\dmn_+)$ and that $Lu=0$ in $\R^\dmn_+$. Let $\varphi$ be smooth and compactly supported in $\R^\dmn$.

Then
\begin{equation*}\langle \nabla^m\varphi,\mat A\nabla^m u\rangle_{\R^\dmn_+} = \langle \nabla^m\mathcal{E}\varphi,\mat A\nabla^m u\rangle_{\R^\dmn_+}\end{equation*}
and so formulas~\eqref{B:eqn:Neumann:E} and \eqref{B:eqn:Neumann:W2} yield the same value for $\langle \Tr_{m-1}\varphi,\M_{\mat A}^+ u\rangle_{\R^n}$.

The operator $\M_{\mat A}^+ u$ as given by formula~\eqref{B:eqn:Neumann:W2} is a bounded operator on the space $\dot W\!A^2_{m-1,1/2}(\R^n)$, and  $\M_{\mat A}^+ u$ as given by formula~\eqref{B:eqn:Neumann:E} extends by density to the same operator on $\dot W\!A^2_{m-1,1/2}(\R^n)$.
\end{lem}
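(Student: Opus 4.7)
The plan is to reduce the whole statement to one technical input, namely that $\mathcal{E}\varphi \in \dot W^2_m(\R^\dmn_+)$; once this is known, the remaining claims follow from the weak formulation of $Lu=0$ and Lemma~\ref{B:lem:Besov}. To verify this $\dot W^2_m$-membership I would compute each $\doublebar{\partial^\alpha(t^k \mathcal{Q}_t^m\varphi_k)}_{L^2(\R^\dmn_+)}^2$ via Plancherel in the horizontal variable, using that $\mathcal{Q}_t^m$ has Fourier symbol $e^{-t^{2m}\abs{\xi}^{2m}}$. Decomposing $\abs\alpha=m$ as $\alpha=\alpha_\pureH+j\vec e_\dmn$ and rescaling the $t$-integral by $s=t\abs{\xi}$, each such quantity reduces to a constant multiple of $\int_{\R^n}\abs{\xi}^{2(m-k)-1}\abs{\widehat{\varphi_k}(\xi)}^2\,d\xi$, which is finite because $\widehat{\varphi_k}$ is Schwartz and the exponent $2(m-k)-1$ exceeds $-n$ whenever $0\le k\le m-1$.

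With this in hand, $\varphi-\mathcal{E}\varphi$ lies in $\dot W^2_m(\R^\dmn_+)$ and satisfies $\Tr_{m-1}^+(\varphi-\mathcal{E}\varphi)=0$, so the density statement made just above \eqref{B:eqn:Neumann:W2} yields $\langle \nabla^m(\varphi-\mathcal{E}\varphi),\mat A\nabla^m u\rangle_{\R^\dmn_+}=0$; this is the first equality. For the agreement of the two definitions, since $\nabla^m u$ and $\nabla^m\mathcal{E}\varphi$ both lie in $L^2(\R^\dmn_+)$, Cauchy--Schwarz places the integrand of \eqref{B:eqn:Neumann:E} in $L^1((0,\infty))$ as a function of $t$, so Fubini collapses the nested limits to the absolutely convergent integral $\langle \mat A\nabla^m u,\nabla^m\mathcal{E}\varphi\rangle_{\R^\dmn_+}$. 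This pairing is the complex conjugate of the one in \eqref{B:eqn:Neumann:W2}, matching the conjugate symmetry between $\langle \M_{\mat A}^+ u,\Tr_{m-1}\varphi\rangle_{\R^n}$ and $\langle \Tr_{m-1}\varphi,\M_{\mat A}^+ u\rangle_{\R^n}$.

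For boundedness, given $\arr f\in \dot W\!A^2_{m-1,1/2}(\R^n)$ I would pick via Lemma~\ref{B:lem:Besov} an extension $\Psi\in\dot W^2_m(\R^\dmn_+)$ with $\Tr_{m-1}^+\Psi=\arr f$ and $\doublebar{\nabla^m\Psi}_{L^2(\R^\dmn_+)}\le C\doublebar{\arr f}_{\dot W\!A^2_{m-1,1/2}(\R^n)}$, and then apply the bound \eqref{B:eqn:elliptic:bounded} together with Cauchy--Schwarz in \eqref{B:eqn:Neumann:W2} to obtain $\abs{\langle \arr f,\M_{\mat A}^+ u\rangle_{\R^n}}\le C\Lambda\doublebar{\nabla^m u}_{L^2(\R^\dmn_+)}\doublebar{\arr f}_{\dot W\!A^2_{m-1,1/2}(\R^n)}$; independence of the choice of $\Psi$ is exactly the point recalled above \eqref{B:eqn:Neumann:W2}. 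Since $\mathfrak{D}$ is dense in $\dot W\!A^2_{m-1,1/2}(\R^n)$ by definition and the two formulations agree on $\mathfrak{D}$ by the previous step, the operator given by \eqref{B:eqn:Neumann:E} extends uniquely by continuity to the bounded operator given by \eqref{B:eqn:Neumann:W2} on the whole space.

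The only step I expect to require nontrivial work is the first one, the quantitative Fourier-analytic estimate underlying $\mathcal{E}\varphi\in\dot W^2_m(\R^\dmn_+)$; in particular I would need to check carefully that the low-frequency behaviour of the weight $\abs{\xi}^{2(m-k)-1}$ never defeats the dimensional integrability condition when $k$ approaches $m-1$.
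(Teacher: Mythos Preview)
Your approach is correct and essentially the same as the paper's: establish $\mathcal{E}\varphi\in\dot W^2_m(\R^\dmn_+)$ by a Fourier computation, then use that $\varphi$ and $\mathcal{E}\varphi$ are two $\dot W^2_m$-extensions of the same trace, and finally invoke Lemma~\ref{B:lem:Besov} and density of $\mathfrak{D}$. Two small remarks: first, your low-frequency worry is unfounded, since for $0\le k\le m-1$ the exponent $2(m-k)-1$ is at least $1$, so the weight $\abs{\xi}^{2(m-k)-1}$ is never singular; second, your computation in fact yields the sharper inequality $\doublebar{\nabla^m\mathcal{E}\varphi}_{L^2(\R^\dmn_+)}\le C\doublebar{\Tr_{m-1}\varphi}_{\dot W\!A^2_{m-1,1/2}(\R^n)}$ (this is formula~\eqref{B:eqn:E:W2} in the paper), because $\sum_{k=0}^{m-1}\int_{\R^n}\abs{\xi}^{2(m-k)-1}\abs{\widehat{\varphi_k}(\xi)}^2\,d\xi$ is comparable to the $\dot W\!A^2_{m-1,1/2}$-norm squared of $\Tr_{m-1}\varphi$.
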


\begin{proof}
By an elementary argument involving the Fourier transform,
\begin{equation}\label{B:eqn:E:W2}
\doublebar{\nabla^m \mathcal{E}(\Tr_{m-1}\varphi)}_{L^2(\R^\dmn_\pm)} \leq C\doublebar{\Tr_{m-1}\varphi}_{\dot B^{2,2}_{1/2}(\R^n)}.\end{equation}
Thus, $\mathcal{E}\varphi$ is an extension of $\Tr_{m-1}\varphi$ in $\dot W^2_m(\R^\dmn_+)$, and so 
\begin{equation*}\langle \nabla^m\Psi,\mat A\nabla^m u\rangle_{\R^\dmn_+} = \langle \nabla^m\mathcal{E}\varphi,\mat A\nabla^m u\rangle_{\R^\dmn_+}\end{equation*} for any other extension $\Psi$ of $\Tr_{m-1}\varphi$ in $\dot W^2_m(\R^\dmn_+)$, in particular, for $\Psi=\varphi$. Boundedness of $\M_{\mat A}^+ u$ on $\dot W\!A^2_{m-1,1/2}(\R^n)$ follows from Lemma~\ref{B:lem:Besov}, and the lemma follows from density of the subspace $\mathfrak{D}$ of Definition~\ref{B:dfn:Whitney} in~$\dot W\!A^2_{m-1,1/2}(\R^n)$.
\end{proof}


\subsection{Potential operators}\label{B:sec:dfn:potentials}
Two very important tools in the theory of second order elliptic boundary value problems are the double and single layer potentials. These potential operators are also very useful in the higher order theory. 
In this section we define our formulations of higher-order layer potentials; this is the formulation used in \cite{BarHM15p,BarHM17pA} and is similar to that used in \cite{Agm57,CohG83,CohG85,Ver05,MitM13B,MitM13A}.

For any $\arr H\in L^2(\R^\dmn)$, by the Lax-Milgram lemma there is a unique function $u\in\dot W^2_m(\R^\dmn)$ that satisfies
\begin{equation}\label{B:eqn:newton}
\langle \nabla^m\varphi, \mat A\nabla^m u\rangle_{\R^\dmn}=\langle \nabla^m\varphi, \arr H\rangle_{\R^\dmn}\end{equation}
for all $\varphi\in \dot W^2_m(\R^\dmn)$.
Let $\Pi^L\arr H=u$.  We refer to $\Pi^L$ as the Newton potential operator for~$L$. See \cite{Bar16} for a further discussion of the operator~$\Pi^L$.

We will need the following duality relation (see \cite[Lemma~42]{Bar16}): if $\arr F\in L^2(\R^\dmn)$ and $\arr G\in L^2(\R^\dmn)$, then
\begin{align}\label{B:eqn:newton:adjoint}
\langle \arr F, \nabla^m\vec \Pi^L\arr G\rangle_{\R^\dmn} &= \langle \nabla^m\vec \Pi^{L^*}\arr F, \arr G\rangle_{\R^\dmn}
.\end{align}

We may define the double and single layer potentials in terms of the Newton potential.
Suppose that $\arr f\in \dot W\!A^2_{m-1,1/2}(\R^n)$.
By Lemma~\ref{B:lem:Besov}, there is some  $F\in \dot W^2_m(\R^\dmn_+)$ that satisfies $\arr f=\Tr_{m-1}^+ F$.
We define the double layer potential of $\arr f$ as
\begin{align}
\label{B:dfn:D:newton}
\D^{\mat A}\arr f &= -\1_+ F + \Pi^L(\1_+ \mat A\nabla^m F)
\end{align}
where $\1_+$ is the characteristic function of the upper half-space $\R^\dmn_+$.
$\D^{\mat A}\arr f$ is well-defined, that is, does not depend on the choice of~$F$; see \cite{BarHM15p}. We remark that by \cite[formula~(2.27)]{BarHM15p}, if $\1_-$ is the characteristic function of the lower half space, then
\begin{align}
\label{B:eqn:D:newton:lower}
\D^{\mat A}\arr f &= \1_- F - \Pi^L(\1_- \mat A\nabla^m F) \quad\text{if }\Tr_{m-1}^- F=\arr f.
\end{align}

Similarly, let $\arr g$ be a bounded operator on $\dot W\!A^2_{m-1,1/2}(\R^n)$. 
There is some $\arr G\in L^2(\R^\dmn_+)$ such that $\langle \arr G, \nabla^m\varphi\rangle_{\R^\dmn_+} = \langle \arr g, \Tr_{m-1}^+\varphi\rangle_{\partial{\R^\dmn_+}}$ for all $\varphi\in \dot W^2_m$; see \cite{BarHM15p}. Let $\1_+\arr G$ denote the extension of $\arr G$ by zero to $\R^\dmn$.
We define
\begin{align}
\label{B:dfn:S:newton}
\s^{L}\arr g&=\Pi^L(\1_+\arr G)
.\end{align}
Again, $\s^{L}\arr g$ does not depend on the choice of extension~$\arr G$.

It was shown in \cite{BarHM17pA} that the operators $\D^{\mat A}$ and $\s^L$, originally defined on $\dot W\!A^{2}_{m-1,1/2}(\R^n)$ and its dual space, extend by density to operators defined on $\dot W\!A^2_{m-1,0}(\R^n)$ and $\dot W\!A^2_{m-1,1}(\R^n)$ or their respective dual spaces; see Section~\ref{B:sec:potentials:bounds}.

%

A benefit of these formulations of layer potentials is the easy proof of the Green's formula. By taking $F=u$ and $\arr G=\mat A\nabla^m u$, we immediately have that
\begin{equation}
\label{B:eqn:green}
\1_+ \nabla^m u=-\nabla^m \D^{\mat A}(\Tr_{m-1}^+  u) + \nabla^m \s^{L}(\M_{\mat A}^+  u) 
\end{equation}
for all $u\in\dot W^2_m(\R^\dmn_+)$ that satisfy $Lu=0$ in $\R^\dmn_+$.

In the second-order case, a variant $\s^{L}\nabla$ of the single layer potential is often used; see \cite{AlfAAHK11,HofMitMor15,HofMayMou15}. We will define an analogous operator in this case.

Let $\alpha$ be a multiindex with $\abs\alpha=m$. If $\alpha_\dmn>0$, let 
\begin{equation}\label{B:eqn:S:S:vertical}\s^L_\nabla (h\arr e_\alpha)(x,t) = -\partial_t\s^L(h\arr e_\gamma)(x,t)\quad\text{where }\alpha=\gamma+\vec e_\dmn.\end{equation}
If $\alpha_\dmn<\abs\alpha=m$, then there is some $j$ with $1\leq j\leq n$ such that $\vec e_j\leq \alpha$. If $h$ is smooth and compactly supported, let 
\begin{equation}\label{B:eqn:S:S:horizontal}\s^L_\nabla (h\arr e_\alpha)(x,t) = -\s^L(\partial_{x_j} h\arr e_\gamma)(x,t)\quad\text{where }\alpha=\gamma+\vec e_j.\end{equation}
If $1\leq \alpha_\dmn\leq m-1$, then the two formulas \eqref{B:eqn:S:S:vertical} and \eqref{B:eqn:S:S:horizontal} coincide, and furthermore, the choice of distinguished direction $x_j$ in formula~\eqref{B:eqn:S:S:horizontal} does not matter; see \cite[formula~\eqref*{A:eqn:S:variant}]{BarHM17pA}. 

\section{Known results}
\label{B:sec:known}

To prove our main results, we will need to use a number of known results from the theory of higher order differential equations. We gather these results in this section.

\subsection{Regularity of solutions to elliptic equations}
\label{B:sec:Caccioppoli}

The first such result we list is the higher order analogue to the Caccioppoli inequality; it was proven in full generality in \cite{Bar16} and some important preliminary versions were established in \cite{Cam80,AusQ00}.
\begin{lem}[The Caccioppoli inequality]\label{B:lem:Caccioppoli}
Suppose that $L$ is a divergence-form elliptic operator associated to coefficients $\mat A$ satisfying the ellipticity conditions \eqref{B:eqn:elliptic} and~\eqref{B:eqn:elliptic:bounded}. Let $ u\in \dot W^2_m(B(X,2r))$ with $L u=0$ in $B(X,2r)$.

Then we have the bound
\begin{equation*}
\fint_{B(X,r)} \abs{\nabla^j  u(x,s)}^2\,dx\,ds
\leq \frac{C}{r^2}\fint_{B(X,2r)} \abs{\nabla^{j-1}  u(x,s)}^2\,dx\,ds
\end{equation*}
for any $j$ with $1\leq j\leq m$.
\end{lem}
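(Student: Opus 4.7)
The plan is to establish the top-derivative case $j=m$ first, via the standard higher-order Caccioppoli argument built on the Gårding inequality \eqref{B:eqn:elliptic}, and then to obtain the intermediate cases $1\leq j\leq m-1$ by iterating this estimate and combining it with Poincaré/interpolation inequalities on concentric balls.

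For the case $j=m$, I would fix a smooth cutoff $\eta$ with $\eta\equiv 1$ on $B(X,r)$, $\supp\eta\subset B(X,2r)$, and $\abs{\nabla^k\eta}\leq C/r^k$, and choose a polynomial $P$ of degree at most $m-1$ so that $\nabla^k(u-P)$ has mean zero on $B(X,2r)$ for every $k=0,1,\dots,m-1$ (such a $P$ exists by an iterative choice of the Taylor coefficients). Write $v=u-P$ and $\Phi=\eta^m v$; since $\Phi$ is compactly supported and lies in $\dot W^2_m(\R^\dmn)$, Gårding's inequality gives
\begin{equation*}
\lambda\doublebar{\nabla^m\Phi}_{L^2(\R^\dmn)}^2 \leq \re\bigl\langle \nabla^m\Phi,\mat A\nabla^m\Phi\bigr\rangle_{\R^\dmn}.
\end{equation*}
Using Leibniz to expand $\nabla^m\Phi=\eta^m\nabla^m v + R$ (with $R$ involving only $\nabla^k v$ for $k<m$ multiplied by factors bounded by $\eta^{m-|\alpha-\gamma|}r^{-(m-|\gamma|)}$) and $\nabla^m(\eta^{2m}v)=\eta^{2m}\nabla^m v+R'$, I would recognize that since $P$ has degree $\leq m-1$ the weak equation $\langle\nabla^m(\eta^{2m}v),\mat A\nabla^m v\rangle=\langle\nabla^m(\eta^{2m}v),\mat A\nabla^m u\rangle=0$ holds. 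Rearranging, the right side of Gårding reduces to cross-terms of the schematic form $\eta^{|\gamma|}r^{-(m-|\gamma|)}\nabla^\gamma v$ paired (via $\mat A$) with either $\eta^m\nabla^m v$ or $\nabla^m\Phi$; by Cauchy--Schwarz and Young's inequality with a small parameter $\varepsilon>0$, I can absorb the $\nabla^m\Phi$ and $\eta^m\nabla^m v$ factors into the left side, leaving only
\begin{equation*}
\doublebar{\eta^m\nabla^m v}_{L^2}^2 \leq C\sum_{k=0}^{m-1}r^{-2(m-k)}\doublebar{\nabla^k v}_{L^2(B(X,2r))}^2.
\end{equation*}
Finally, the mean-zero normalization of $\nabla^k v$ lets me apply iterated Poincaré: $\doublebar{\nabla^k v}_{L^2(B(X,2r))}\leq Cr^{m-1-k}\doublebar{\nabla^{m-1}v}_{L^2(B(X,2r))}$ for $0\leq k\leq m-1$. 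Since $\nabla^m v=\nabla^m u$ and $\doublebar{\nabla^{m-1}v}_{L^2(B(X,2r))}\leq 2\doublebar{\nabla^{m-1}u}_{L^2(B(X,2r))}$, dividing by the volumes gives the $j=m$ case.

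For $1\leq j<m$, the equation has fixed order $2m$, so one cannot simply iterate the top-case argument with a different power of $\eta$. Instead, I would use the scheme of \cite{Cam80}: running the same Gårding/test-function argument with $\varphi=\eta^{2m}(u-P_j)$, where $P_j$ is now a polynomial of degree $j-1$ chosen so that $\nabla^k(u-P_j)$ has mean zero on $B(X,2r)$ for $0\leq k\leq j-1$, and combining the resulting estimate with a Gagliardo--Nirenberg interpolation inequality on the ball---$\doublebar{\nabla^k u}_{L^2(B(X,\rho))}\leq C\doublebar{\nabla^m u}_{L^2(B(X,\rho))}^{\theta}\doublebar{\nabla^{j-1}u-\mathrm{poly}}_{L^2(B(X,\rho))}^{1-\theta}$ with $\theta=(k-j+1)/(m-j+1)$---applied iteratively at slightly shrunken concentric balls $B(X,r)\subset B(X,r(1+2^{-\ell}))\subset B(X,2r)$ to absorb the higher-derivative terms. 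The main obstacle is exactly this intermediate range: one must avoid circularity by carefully nesting the interpolation on a chain of concentric balls and absorbing using the standard hole-filling/iteration lemma of Campanato/Giaquinta, so that the top Caccioppoli (which can be stated as $\doublebar{\nabla^m u}_{L^2(B(X,r))}\leq Cr^{-(m-j+1)}\doublebar{\nabla^{j-1}u-\overline{\nabla^{j-1}u}}_{L^2(B(X,2r))}$ by choosing $P_j$ rather than $P$) combines with the interpolation inequality to yield the desired bound $\fint_{B(X,r)}\abs{\nabla^j u}^2\leq (C/r^2)\fint_{B(X,2r)}\abs{\nabla^{j-1}u}^2$.
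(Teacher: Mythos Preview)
The paper does not supply its own proof of this lemma; it is quoted from \cite{Bar16}, with earlier versions in \cite{Cam80,AusQ00}. Your outline is precisely the standard argument carried out in those references: the $j=m$ case via the G\aa rding/cutoff/Poincar\'e scheme you describe, and the intermediate cases $1\leq j<m$ by interpolating the intermediate gradients between $\nabla^{j-1}$ and $\nabla^m$ and then using the Giaquinta--Campanato iteration on a chain of concentric balls to absorb the top-order term on the larger ball. One small omission: the Gagliardo--Nirenberg inequality on a ball carries an additive lower-order term $Cr^{-(k-j+1)}\doublebar{\nabla^{j-1}(u-P_j)}_{L^2}$ in addition to the multiplicative one you wrote; it is harmless here since it is already of the target form, but it should be recorded. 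With that caveat your proposal is correct and matches the cited proofs.
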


Next, we mention the higher order generalization of Meyers's reverse H\"older inequality for gradients. The following theorem follows from the Caccioppoli inequality of \cite{Cam80,AusQ00,Bar16}, and was stated in some form in all three works. (The version given below comes most directly from \cite{Bar16}.)

\begin{thm}
\label{B:thm:Meyers} 
Let $L$ be an operator of order~$2m$ that satisfies the bounds \eqref{B:eqn:elliptic:bounded} and~\eqref{B:eqn:elliptic}. 
Then there is some number $p^+=p^+_0=p^+_L>2$ depending only on the standard constants
such that the following statement is true.

Let $X_0\in\R^\dmn$ and let $r>0$. Suppose that 
$L  u=0$ 
or $L^* u=0$ 
in $B(X_0,2r)$. 
Suppose that $0<p<q<p^+$. Then
\begin{align}
\label{B:eqn:Meyers}
\biggl(\int_{B(X_0,r)
}\abs{\nabla^m   u}^q\biggr)^{1/q}
&\leq 
	\frac{C(p,q)}{r^{\pdmn/p-\pdmn/q}}
	\biggl(\int_{B(X_0,2r)}\abs{\nabla^m   u}^p\biggr)^{1/p} 
\end{align}
for some constant $C(p,q)$ depending only on $p$, $q$ and the standard parameters.

We may also bound the lower-order derivatives. Let $1\leq k\leq m$. 
There is some extended real number $p^+_k$, with $p_k^+\geq p^+_L\,\pdmn/(\dmn-k\,p^+_L)$ if $\dmn>k\,p^+_L$ and with $p^+_k=\infty$ if $\dmn\leq k\,p^+_L$, such that the following is true.  Suppose $0<p< q<p^+_k$. Then 
\begin{align}\label{B:eqn:Meyers:lower}
\biggl(\int_{B(X_0,r)
}\abs{\nabla^{m-k}   u}^{q}\biggr)^{1/{q}}
&\leq 
	\frac{C(p,q)}{r^{\pdmn/p-\pdmn/{q}}}
	\biggl(\int_{B(X_0,2r)}\abs{\nabla^{m-k}   u}^{p}\biggr)^{1/p} 
.\end{align}
%
\end{thm}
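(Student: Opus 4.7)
The plan is to follow the classical Meyers-Gehring strategy. First, I would combine the Caccioppoli inequality (Lemma~\ref{B:lem:Caccioppoli}) with the Sobolev-Poincar\'e inequality to derive a reverse H\"older inequality for $\nabla^m u$ on concentric balls. Then I would invoke Gehring's lemma to self-improve the integrability exponent from $2$ to some $p^+_L>2$. Finally, for the lower-order derivatives, I would bootstrap via iterated Sobolev embedding applied to $\nabla^{m-k}u$ minus an appropriate polynomial.

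For the reverse H\"older step, fix a ball $B=B(X_0,r)$ and let $P$ be the polynomial on $\R^\dmn$ of degree at most $m-1$ chosen so that $\fint_B\partial^\gamma(u-P)=0$ for every multiindex $\gamma$ with $\abs{\gamma}\leq m-1$. Since $\partial^\beta P=0$ whenever $\abs\beta=m$, we have $L(u-P)=Lu=0$, so Caccioppoli applied to $u-P$ gives
\[
\fint_{B(X_0,r/2)}\abs{\nabla^m u}^2 \leq \frac{C}{r^2}\fint_B \abs{\nabla^{m-1}(u-P)}^2.
\]
The Sobolev-Poincar\'e inequality in $\R^\dmn$ with exponent $q_*=2\dmn/(\dmn+2)<2$ then bounds the right-hand side by $C\bigl(\fint_B\abs{\nabla^m u}^{q_*}\bigr)^{2/q_*}$, yielding the desired reverse H\"older inequality. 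Gehring's lemma then produces some $p^+_L>2$ and the estimate \eqref{B:eqn:Meyers} in the endpoint case $p=q_*$, $q=p^+_L$. The full range $0<p<q<p^+_L$ follows by H\"older interpolation (for $p<q_*$, combined with absorbing the Caccioppoli chain) and the standard self-improvement property of Gehring (for $q_*<q<p^+_L$).

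For the lower-order estimate, once $\nabla^m u\in L^{p^+_L}_{loc}$, iterated Sobolev embedding on $\R^\dmn$ gives $\nabla^{m-k}(u-Q)\in L^{p^+_k}_{loc}$ for an appropriate polynomial $Q$ of degree at most $m-1$, where $p^+_k$ is precisely the $k$-fold Sobolev exponent $p^+_L\dmn/(\dmn-k\,p^+_L)$ (or $\infty$ when $\dmn\leq k\,p^+_L$). To upgrade this to a reverse H\"older bound for $\nabla^{m-k}u$ itself, I would iterate Caccioppoli $k$ times to obtain
\[
\fint_{B(X_0,r/2^k)}\abs{\nabla^m u}^2 \leq \frac{C}{r^{2k}}\fint_B \abs{\nabla^{m-k}(u-Q)}^2,
\]
chain with iterated Sobolev-Poincar\'e applied to $\nabla^{m-k}(u-Q)$ to produce a reverse H\"older inequality for $\nabla^{m-k}u$, and then invoke Gehring once more. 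The main obstacle will be this last step: $\nabla^{m-k}u$ does not itself solve an elliptic equation, so Meyers's self-improvement cannot be applied to it directly and must be bootstrapped through $\nabla^m u$. The delicate point is bookkeeping the polynomial corrections so that the correct averages vanish at each level of the Sobolev-Poincar\'e chain and the polynomial contributions are reabsorbed by equivalence of norms on finite-dimensional polynomial spaces.
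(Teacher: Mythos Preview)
The paper does not give its own proof of this theorem; it is quoted in Section~\ref{B:sec:Caccioppoli} as a known result, attributed to \cite{Cam80,AusQ00,Bar16}. Your Meyers--Gehring outline (Caccioppoli plus Sobolev--Poincar\'e to get a reverse H\"older inequality with subcritical exponent on the right, then Gehring's lemma to self-improve) is exactly the argument in those references for the bound~\eqref{B:eqn:Meyers}, so the first half of your plan is correct and matches the literature.

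For the lower-order estimate~\eqref{B:eqn:Meyers:lower} your strategy is also the right one, but one remark: a second application of Gehring is not really what is needed. Once the full-range estimate~\eqref{B:eqn:Meyers} is in hand for $\nabla^m u$, the bound for $\nabla^{m-k}u$ follows by chaining Sobolev--Poincar\'e (to pass from $\nabla^m u$ up to $\nabla^{m-k}(u-Q)$ in $L^q$ for $q<p_k^+$) with iterated Caccioppoli (to pass from $\nabla^m u$ back down to $\nabla^{m-k}(u-Q)$ in $L^2$). This already produces a weak reverse H\"older inequality with exponent $q$ on the left and $2$ on the right; pushing the right-hand exponent down to an arbitrary $p\in(0,2)$ is then a standard iteration/covering argument for weak reverse H\"older classes, not Gehring (which improves the \emph{upper} exponent). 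The polynomial bookkeeping you flag is real but manageable: since $L(u-Q)=0$ for any polynomial $Q$ of degree at most $m-1$, Caccioppoli applies to $u-Q$, and the residual polynomial piece $\nabla^{m-k}Q$ has degree at most $k-1$, hence all its $L^p$ averages on a ball are comparable; its coefficients are controlled by averages of $\nabla^{m-k+j}u$ over the ball, which Caccioppoli again bounds by $\nabla^{m-k}u$ on a slightly larger ball.
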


We remark that if $\dmn=2$ then $p^+_1=\infty$. If $\dmn=3$ and $\mat A$ is $t$-independent, then again $p^+_1=\infty$; the argument presented in \cite[Appendix~B]{AlfAAHK11} in the case $m=1$ is valid in the higher order case. 

\subsection{Estimates on layer potentials}
\label{B:sec:potentials:bounds}

We will make extensive use of the following estimates on layer potentials from \cite{BarHM15p,BarHM17pA}, in particular the technical estimates \eqref{B:eqn:S:lusin:p:intro}, \eqref{B:eqn:S:lusin:variant:intro} and~\eqref{B:eqn:S:Schwartz:p}. (Indeed their applicability to this paper is the main reason the bounds \eqref{B:eqn:S:lusin:variant:intro} and~\eqref{B:eqn:S:Schwartz:p} were proven in \cite{BarHM17pA}.)

\begin{thm}\label{B:thm:square}\textup{(\cite[Theorem~1.1]{BarHM15p})}
Suppose that $L$ is an elliptic operator of the form \eqref{B:eqn:divergence} of order~$2m$, associated with coefficients $\mat A$ that are $t$-independent in the sense of formula~\eqref{B:eqn:t-independent} and satisfy the ellipticity conditions \eqref{B:eqn:elliptic} and~\eqref{B:eqn:elliptic:bounded}.

Then the operators $\D^{\mat A}$ and $\s^L$, originally defined on $\dot W\!A^{2}_{m-1,1/2}(\R^n)$ and its dual space, extend by density to operators that satisfy
\begin{align}
\label{B:eqn:S:square}
\int_{\R^n}\int_{-\infty}^\infty \abs{\nabla^m \partial_t\s^{L} \arr g(x,t)}^2\,\abs{t}\,dt\,dx
	& \leq C \doublebar{\arr g}_{L^2(\R^n)}^2
,\\
\label{B:eqn:D:square}
\int_{\R^n}\int_{-\infty}^\infty \abs{\nabla^m \partial_t  \D^{\mat A} \arr f(x,t)}^2\,\abs{t}\,dt\,dx
	& \leq C \doublebar{\arr f}_{\dot W^2_1(\R^\dmnMinusOne)}^2
	= C \doublebar{\nabla_\pureH\arr f}_{L^2(\R^n)}^2
\end{align}
for all $\arr g\in {L^2(\R^n)}$ and all $\arr f\in \dot W\!A^2_{m-1,1}(\R^n)$.
\end{thm}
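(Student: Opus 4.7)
The plan is to prove both square function estimates by reducing the problem to a first-order system framework and then invoking a Kato-type quadratic estimate. Because the coefficients $\mat A$ are $t$-independent, the equation $Lu=0$ from \eqref{B:eqn:L} can be recast as a first-order evolution in~$t$. I would introduce a state vector $\vec U(\,\cdot\,,t)$ assembling the transverse derivatives $\partial_t^j u$ for $0\leq j\leq m-1$ together with the associated conormal quantities $\sum_{\abs\alpha=\abs\beta=m} A_{\alpha\beta}\,\partial^\beta u$ grouped by how many $t$-derivatives they contain. Solving the top equation for $\partial_t^m u$ using the G{\aa}rding inequality~\eqref{B:eqn:elliptic}, the system $Lu=0$ rewrites as $\partial_t \vec U = \mathcal{D}\mathcal{B}\vec U$, where $\mathcal{D}$ is a constant-coefficient first-order differential operator in the tangential variables and $\mathcal{B}$ is a bounded, $t$-independent multiplication operator built from $\mat A$. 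The ellipticity~\eqref{B:eqn:elliptic} translates into bisectoriality of $\mathcal{D}\mathcal{B}$ on a suitable closed subspace of $L^2(\R^n)$.

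Next I would identify the layer potentials inside this functional-analytic framework. After matching the definitions~\eqref{B:dfn:D:newton} and~\eqref{B:dfn:S:newton}, the upper-half-space trace $\nabla^{m-1}\D^{\mat A}\arr f(\,\cdot\,,t)$ is essentially $e^{-t\abs{\mathcal{D}\mathcal{B}}}P_+\arr f$, and $\nabla^{m-1}\s^L\arr g(\,\cdot\,,t)$ is essentially $e^{-t\abs{\mathcal{D}\mathcal{B}}}$ applied to an $L^2$ lift of~$\arr g$, where $P_+$ is the spectral projection onto data arising from upper-half-space solutions. Differentiating once more in $t$ and completing the transverse derivatives to a full $\nabla^m$ amounts, up to bounded multiplication, to inserting an extra factor of $\mathcal{D}\mathcal{B}$. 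Consequently both \eqref{B:eqn:S:square} and \eqref{B:eqn:D:square} reduce to the single quadratic estimate
\begin{equation*}
\int_0^\infty \doublebar{t\,\mathcal{D}\mathcal{B}\,e^{-t\abs{\mathcal{D}\mathcal{B}}}h}_{L^2(\R^n)}^2\,\frac{dt}{t} \leq C\doublebar{h}_{L^2(\R^n)}^2
\end{equation*}
for $h$ in the closure of the range of $\mathcal{D}\mathcal{B}$, with the data-space adjustments (namely $L^2$ for $\arr g$ and $\dot W^2_1$ for $\arr f$) absorbed into the explicit form of the lift into~$h$.

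The heart of the matter, and the main obstacle, is this last quadratic estimate, which is equivalent to the $L^2$-bounded $H^\infty$ functional calculus of $\mathcal{D}\mathcal{B}$. For $m=1$ this is the Auscher--Axelsson--McIntosh reformulation of the Kato square root theorem. For general $m\geq 1$ I would verify the Axelsson--Keith--McIntosh axioms for $\mathcal{D}\mathcal{B}$ (nilpotency of $\mathcal{D}$, coercivity on its range, and off-diagonal $L^2$ decay for the resolvent), reducing matters to a higher-order Kato-type square root estimate; this in turn is accessible through the standard $T(b)$ machinery once the Caccioppoli inequality of Lemma~\ref{B:lem:Caccioppoli} and the Meyers-type reverse H\"older inequality of Theorem~\ref{B:thm:Meyers} are in hand. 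Once the quadratic estimate is established, the claimed bounds~\eqref{B:eqn:S:square}--\eqref{B:eqn:D:square} and the density extension of $\D^{\mat A}$ and $\s^L$ from $\dot W\!A^2_{m-1,1/2}(\R^n)$ and its dual to $\dot W\!A^2_{m-1,1}(\R^n)$ and $L^2(\R^n)$ respectively follow immediately from the uniform $L^2\to L^2(t\,dt)$ bounds produced by the semigroup representation.
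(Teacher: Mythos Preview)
This theorem is not proven in the present paper; it is quoted from \cite{BarHM15p} as a known input (see Section~\ref{B:sec:potentials:bounds}). So there is no ``paper's own proof'' to compare against here. That said, your outline is worth commenting on, both because it is a genuinely different route from the one taken in \cite{BarHM15p}, and because it has a real gap.

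The proof in \cite{BarHM15p} does \emph{not} pass through a first-order $\mathcal{D}\mathcal{B}$ system. It works directly with the Newton-potential representation of $\s^L$ and $\D^{\mat A}$ (as in Section~\ref{B:sec:dfn:potentials}), expresses the relevant square functions as singular-integral-type operators with kernels built from the higher-order fundamental solution, and then proves the $L^2$ square function bound via a $T(b)$/Carleson-measure argument whose core input is the solution of the Kato square root problem for higher-order divergence-form operators in~$\R^n$. The Caccioppoli inequality and Meyers bound enter to supply off-diagonal decay and local regularity for the kernel, and the estimate~\eqref{B:eqn:D:square} is then deduced from~\eqref{B:eqn:S:square}. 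In short: kernel estimates plus $T(b)$ reduce the problem to a known Kato estimate on $\R^n$, with no evolution-in-$t$ semigroup.

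Your approach is the Auscher--Axelsson--McIntosh strategy, which for $m=1$ is a complete and elegant alternative. The gap is in the extension to $m\geq 2$. First, you invoke the G{\aa}rding inequality~\eqref{B:eqn:elliptic} to ``solve the top equation for $\partial_t^m u$'' and to obtain bisectoriality of $\mathcal{D}\mathcal{B}$; but the $\mathcal{D}\mathcal{B}$ framework requires a \emph{pointwise} accretivity of~$\mathcal{B}$ on the range of~$\mathcal{D}$, and it is not clear that the integrated G{\aa}rding condition~\eqref{B:eqn:elliptic} yields this for the particular block structure your state vector produces when $m\geq 2$. Second, the identification of $\nabla^{m-1}\D^{\mat A}\arr f(\,\cdot\,,t)$ and $\nabla^{m-1}\s^L\arr g(\,\cdot\,,t)$ with $e^{-t|\mathcal{D}\mathcal{B}|}$ applied to suitable lifts is asserted but not justified; already for $m=1$ this identification is a theorem, not a definition, and for $m\geq 2$ the correct lift of $\arr f\in\dot W\!A^2_{m-1,1}(\R^n)$ into the state space (and why it lands in the right spectral subspace) is nontrivial. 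Third, I am not aware of a worked-out $\mathcal{D}\mathcal{B}$ formalism for divergence-form operators of order~$2m\geq 4$ in the literature; verifying the Axelsson--Keith--McIntosh hypotheses here is itself a substantial project, not a routine check. So while the strategy is plausible in spirit, as written it defers essentially all of the difficulty to steps that are neither standard nor sketched.
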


\begin{thm}%
\label{B:thm:square:rough}%
\textup{(\cite[Theorems \ref*{A:thm:S:square:variant} and \ref*{A:thm:D:square:variant}]{BarHM17pA})}
Let $L$ be as in Theorem~\ref{B:thm:square}.
Then $\D^{\mat A}$ and $\s^L$ extend to operators that satisfy
\begin{align}
\label{B:eqn:S:square:variant}
\int_{\R^n}\int_{-\infty}^\infty \abs{\nabla^m \s_\nabla^{L} \arr h(x,t)}^2\,\abs{t}\,dt\,dx
	& \leq C \doublebar{\arr h}_{L^2(\R^n)}^2
,\\
\label{B:eqn:D:square:rough}
\int_{\R^n}\int_{-\infty}^\infty \abs{\nabla^m \D^{\mat A} \arr f(x,t)}^2\,\abs{t}\,dt\,dx
	& \leq C \doublebar{\arr f}_{L^2(\R^n)}^2
\end{align}
for all $\arr h\in L^2(\R^n)$ and all $\arr f\in \dot W\!A^2_{m-1,0}(\R^n)$.

\end{thm}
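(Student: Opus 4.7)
The plan is to prove \eqref{B:eqn:S:square:variant} and \eqref{B:eqn:D:square:rough} separately, reducing each to Theorem~\ref{B:thm:square} via the variational definitions of the layer potentials combined with the $t$-independence of $\mat A$. In each case, one first works on a dense subspace where all manipulations are justified, and then extends by density using the very square function bounds being established.

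For the single layer bound \eqref{B:eqn:S:square:variant}, by linearity it suffices to treat $\arr h = h \arr e_\alpha$ with $\abs{\alpha} = m$. If $\alpha_\dmn \geq 1$, definition \eqref{B:eqn:S:S:vertical} identifies $\s^L_\nabla(h \arr e_\alpha) = -\partial_t \s^L(h \arr e_\gamma)$ with $\abs{\gamma} = m-1$, so that $\nabla^m \s^L_\nabla(h \arr e_\alpha) = -\nabla^m \partial_t \s^L(h \arr e_\gamma)$ and the desired estimate is immediate from \eqref{B:eqn:S:square} applied with $\arr g = h \arr e_\gamma$. If $\alpha_\dmn = 0$, definition \eqref{B:eqn:S:S:horizontal} combined with the observation that $\s^L$ commutes with horizontal derivatives (since the Lax--Milgram problem defining $\Pi^L$ is invariant under horizontal translations in the $x_j$ direction) yields $\s^L_\nabla(h \arr e_\alpha) = -\partial_{x_j} \s^L(h \arr e_\gamma)$ for some $j \leq n$ with $\gamma_\dmn = 0$. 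For each multiindex $\beta$ with $\abs{\beta} = m$, the component $\partial^{\beta + \vec e_j} \s^L(h \arr e_\gamma)$ then has total order $m+1$; when $\beta_\dmn \geq 1$ one may factor out a $\partial_t$ and again appeal to \eqref{B:eqn:S:square}. The only case not directly covered is $\alpha_\dmn = \beta_\dmn = 0$, which reduces to the purely horizontal residual estimate discussed below.

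For the double layer bound \eqref{B:eqn:D:square:rough}, the challenge is that \eqref{B:eqn:D:square} requires data in $\dot W^2_1$, whereas we now have only $L^2$ data. My approach is to exploit the Newton-potential representation \eqref{B:dfn:D:newton}, $\D^{\mat A}\arr f = -\1_+ F + \Pi^L(\1_+ \mat A \nabla^m F)$, choosing an extension $F \in \dot W^2_m(\R^\dmn_+)$ of $\arr f$ (for instance a Neumann-type extension $\mathcal{E} \arr f$ truncated by a $t$-cutoff supported near the boundary) satisfying $\int \int_{\R^\dmn_+} \abs{\nabla^m F}^2 \, t \, dt \, dx \leq C \doublebar{\arr f}_{L^2(\R^n)}^2$. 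The first term $-\1_+ F$ contributes directly through this bound, while the Newton-potential term is handled by the duality \eqref{B:eqn:newton:adjoint}, which reduces it to a superposition of single-layer potentials that fall under \eqref{B:eqn:S:square:variant} already established. Once again, the purely horizontal derivative components reduce to the residual estimate.

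The main obstacle is the residual estimate $\int \int \abs{\partial_\pureH^{m+1} u}^2 \, t \, dt \, dx \leq C \doublebar{h}_{L^2(\R^n)}^2$ for $u = \s^L(h \arr e_\gamma)$ with $\abs{\gamma} = m-1$ and $\gamma_\dmn = 0$ (and its analogue for $\D^{\mat A}$), since Theorem~\ref{B:thm:square} directly controls only derivatives involving $\partial_t$. My plan here is to adapt the Littlewood--Paley machinery used to prove Theorem~\ref{B:thm:square} in \cite{BarHM15p}, coupled with structural identities from $Lu = 0$: by $t$-independence and ellipticity, the coefficient of $\partial_t^{2m} u$ in $Lu$ is an invertible $x$-dependent matrix, yielding algebraic relations between purely horizontal high-order derivatives of $u$ and mixed derivatives containing at least one $\partial_t$. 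The Caccioppoli inequality (Lemma~\ref{B:lem:Caccioppoli}) handles the lower-order remainders produced by this reduction. The technical heart of the argument---and the step requiring the most care---is preserving the $t$-weight in the integrand under this reduction, so that the bound remains integrable near $t = 0$ and \eqref{B:eqn:S:square} can be applied without loss.
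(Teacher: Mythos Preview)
The paper does not prove this theorem; it is quoted from the companion paper \cite{BarHM17pA}, so there is no ``paper's own proof'' to compare against here. That said, your sketch has a genuine gap that should be flagged.

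The reduction for $\alpha_\dmn\ge 1$ is fine, and the observation that when $\alpha_\dmn=0$ but $\beta_\dmn\ge 1$ one can factor out a $\partial_t$ and invoke \eqref{B:eqn:S:square} is also correct. The problem is your handling of the residual case $\alpha_\dmn=\beta_\dmn=0$, i.e.\ the purely horizontal $(m{+}1)$st-order derivatives $\partial_\pureH^{\beta+\vec e_j}\s^L(h\arr e_\gamma)$. Your plan is to use the equation $Lu=0$ to ``express purely horizontal high-order derivatives in terms of mixed derivatives containing at least one $\partial_t$.'' But $Lu=0$ is a single scalar relation among the $2m$th-order derivatives of~$u$; it lets you solve for (a multiple of) $\partial_t^{2m}u$ in terms of derivatives with fewer $t$-derivatives, not the other way around, and it says nothing directly about $(m{+}1)$st-order derivatives. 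Even in the second-order case $m=1$, the identity $\sum_{i,j}\partial_i(A_{ij}\partial_j u)=0$ does not by itself control $\partial_{x_i}\partial_{x_j}u$ for $i,j\le n$ in terms of derivatives containing $\partial_t$. So the mechanism you describe simply does not produce the needed algebraic reduction; the residual estimate is exactly the hard part, and its proof in \cite{BarHM17pA} requires substantially more (Littlewood--Paley/$Tb$-type square function machinery applied to the layer potential kernels themselves, not just the equation $Lu=0$ plus Caccioppoli).

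Your double-layer argument has a related soft spot: you assert that the Newton-potential term $\Pi^L(\1_+\mat A\nabla^m F)$ ``reduces to a superposition of single-layer potentials'' via the duality \eqref{B:eqn:newton:adjoint}, but that formula is a pairing identity, not a representation of $\nabla^m\Pi^L(\cdot)$ as a single layer; it does not give you pointwise or square-function control of $\nabla^m\Pi^L(\1_+\mat A\nabla^m F)$ in terms of \eqref{B:eqn:S:square:variant}. Moreover, even granting the reduction, you would again land on the same residual estimate you have not established.
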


\begin{thm}\textup{(\cite[Theorem~\ref*{A:thm:Carleson:intro}]{BarHM17pA})}
\label{B:thm:potentials}
 Let $L$ be as in Theorem~\ref{B:thm:square}.
Suppose that $L$ is an elliptic operator associated with coefficients $\mat A$ that are $t$-independent in the sense of formula~\eqref{B:eqn:t-independent} and satisfy the ellipticity conditions \eqref{B:eqn:elliptic} and~\eqref{B:eqn:elliptic:bounded}.

If $k$ is large enough (depending on $m$ and~$n$), then the following statements are true.


There is some $\varepsilon>0$ such that we also have the area integral estimates
\begin{align}
\label{B:eqn:S:lusin:p:intro}
\doublebar{\mathcal{A}_2^\pm (\abs{t}^k\,\nabla^m \partial_t^{k} \s^{L} \arr g)}_{L^q(\R^n)}
&\leq C(k,q) \doublebar{\arr g}_{L^q(\R^n)}
,\\
\label{B:eqn:S:lusin:variant:intro}
\doublebar{\mathcal{A}_2^\pm (\abs{t}^{k+1}\,\nabla^m \partial_t^{k}\s^{L}_\nabla \arr h)}_{L^q(\R^n)}
&\leq C(k,q) \doublebar{\arr h}_{L^q(\R^n)}
\end{align}
for any $2-\varepsilon< q<\infty$. If $\dmn=2$ or $\dmn=3$ then the estimate \eqref{B:eqn:S:lusin:p:intro} is valid for $1<q<\infty$.

Finally, let $\eta$ be a Schwartz function defined on $\R^n$ with $\int\eta=1$. 
Let $\mathcal{Q}_t$ denote convolution with $\eta_t=t^{-n}\eta(\,\cdot\,,t)$. 
Let $\arr b$ be any array of bounded functions. Then for any $p$ with $1<p<\infty$, we have that
\begin{equation}
\label{B:eqn:S:Schwartz:p}
\doublebar{\mathcal{A}_2^\pm(\abs{t}^{k+1}\partial_\perp^{k+m}\s^{L}_\nabla (\arr b \mathcal{Q}_{\abs{t}} h))}_{L^p(\R^n)}
\leq C(p)\doublebar{\arr b}_{L^\infty(\R^n)}\doublebar{h}_{L^p(\R^n)}
\end{equation}
where the constant $C(p)$ depends only on $p$, $k$, the Schwartz constants of~$\eta$, and on the standard parameters $n$, $m$, $\lambda$, and~$\Lambda$.

\end{thm}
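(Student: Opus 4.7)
The plan is to bootstrap from the $L^2$ area-integral estimates \eqref{B:eqn:S:square}--\eqref{B:eqn:D:square:rough} up to $L^q$ for the stated ranges of $q$, using Meyers' reverse H\"older inequality (Theorem~\ref{B:thm:Meyers}) together with Calder\'on--Zygmund / tent-space techniques. The starting point is that, by differentiating the $L^2$ estimate $k-1$ times in $t$ and invoking the Caccioppoli inequality (Lemma~\ref{B:lem:Caccioppoli}) on the solution $\s^L \arr g$, one obtains the $L^2$ version of \eqref{B:eqn:S:lusin:p:intro} and \eqref{B:eqn:S:lusin:variant:intro} for any $k \geq 1$. The role of $k$ being large is to provide enough vertical integrability to absorb the cost of the sparse-domination argument below.

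To pass from $L^2$ to $L^q$ for $q > 2$, I would run a Calder\'on--Zygmund / sparse-domination argument. For a fixed cube $Q\subset\R^n$, split $\arr g = \arr g\chi_{2Q}+\arr g\chi_{(2Q)^c}$; apply the $L^2$ bound on a tent region over $Q$ to the local piece. For the non-local piece, the function $u = \s^L(\arr g\chi_{(2Q)^c})$ satisfies $Lu = 0$ in a neighborhood of the tent over $Q$, so Theorem~\ref{B:thm:Meyers} provides a reverse H\"older inequality for $\abs{t}^k\nabla^m\partial_t^k u$ on Whitney balls. Combined with the off-diagonal decay of $\s^L$, this produces a pointwise bound of the form $\mathcal{A}_2(\abs{t}^k\nabla^m\partial_t^k\s^L\arr g)(x) \leq C\,M_s\arr g(x)$, where $M_s h = M(\abs{h}^s)^{1/s}$ with $s\in(1,2)$ dictated by the Meyers exponent $p^+$. $L^q$-boundedness of the Hardy--Littlewood maximal function then yields the estimate for every $q > s$, giving the stated range $2-\varepsilon < q < \infty$.

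For the low-dimensional extension to $1 < q < 2-\varepsilon$ in \eqref{B:eqn:S:lusin:p:intro}, I would use that $p_1^+ = \infty$ when $\dmn \in \{2,3\}$ and run the dual argument: establish an $H^1 \to L^1$ bound for the square function by testing against $H^1$-atoms $a$ supported in a cube $Q$, controlling the local piece by $L^2$ boundedness and Cauchy--Schwarz, and exploiting the cancellation $\int a = 0$ together with a standard difference-of-kernels estimate (using the reverse H\"older on $\nabla^m\partial_t^{k+1}\s^L$) for the tail. Interpolation with the $L^2$ bound completes the range $1 < q < 2$, and the upper range comes from the sparse-domination argument above, which is unconstrained in these dimensions since $p_1^+=\infty$.

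The main obstacle is the third estimate \eqref{B:eqn:S:Schwartz:p}, where $\arr b$ is only $L^\infty$ and the input is premultiplied by the mollifier $\mathcal{Q}_{\abs{t}}$. Here I would apply a tent-space $T(1)$-theorem in the spirit of Christ--Journ\'e and Auscher--Tchamitchian: regard $\Theta_t h := \abs{t}^{k+1}\partial_\perp^{k+m}\s^L_\nabla(\arr b\,\mathcal{Q}_{\abs{t}}h)(\,\cdot\,,t)$ as a Littlewood--Paley-type operator, verify a quasi-orthogonality (Schur-type) bound using \eqref{B:eqn:S:lusin:variant:intro} at $q=2$, and check that $\Theta_t(1)$ generates a bounded square function. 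This last verification uses the vanishing moments of $\mathcal{Q}_t$ at the boundary, the high vertical-derivative order $k+m$, and the $t$-independence of $\mat A$; the largeness of $k$ is crucial, since it provides the integrability in $t$ at infinity needed to absorb the $L^\infty$ factor of $\arr b$.
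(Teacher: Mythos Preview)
This theorem is not proved in the present paper: it is stated as a citation of \cite[Theorem~\ref*{A:thm:Carleson:intro}]{BarHM17pA}, and the paper simply imports it as a known result with no accompanying argument. So there is no ``paper's own proof'' against which to compare your proposal.

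That said, a few remarks on the substance of your sketch. The general shape---$L^2$ bound from Theorems~\ref{B:thm:square} and~\ref{B:thm:square:rough}, then extrapolation to $q>2$ via a good-$\lambda$ or sparse-type argument using Meyers (Theorem~\ref{B:thm:Meyers}) on the far-away piece, and an atomic/duality argument for $q<2$ in low dimensions exploiting $p_1^+=\infty$---is a plausible architecture and is in the spirit of how such estimates are typically obtained. Two places deserve caution. First, your claimed pointwise bound $\mathcal{A}_2(\abs{t}^k\nabla^m\partial_t^k\s^L\arr g)(x)\leq C\,M_s\arr g(x)$ is too strong as stated; what one normally gets from the local/nonlocal split is a \emph{local} $L^2$-averaged square-function bound (good-$\lambda$ input), not a genuine pointwise domination by $M_s$. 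Second, for \eqref{B:eqn:S:Schwartz:p} the key structural point is that the input has the form $\arr b\,\mathcal{Q}_{\abs t}h$ with a \emph{smoothing} $\mathcal{Q}_{\abs t}$ at the same scale as the output; a $T(1)$/Carleson-measure argument along the lines you indicate is the right idea, but the verification that $\Theta_t 1$ yields a Carleson measure is exactly where the heavy lifting lies (and is the reason the result is isolated in \cite{BarHM17pA}). Your description of that step is too schematic to be called a proof.
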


\section{Preliminaries}
\label{B:sec:preliminaries}

In this section we will prove some preliminary results that will be of use both in Section~\ref{B:sec:Dirichlet} (that is, to bound the Dirichlet traces of solutions) and in Section~\ref{B:sec:Neumann} (that is, to bound the Neumann traces of solutions).

\subsection{Regularity along horizontal slices}

In this section we will prove a regularity result for solutions to elliptic equations with $t$-independent coefficients. 
The following lemma was proven in the case $m=1$ in \cite[Proposition 2.1]{AlfAAHK11} and generalized to the case $m\geq 2$, $p=2$ in \cite[Lemma~3.2]{BarHM15p} and the case $m\geq 2$, $p$ arbitrary in \cite[Lemma~\ref*{A:lem:slices}]{BarHM17pA}.
\begin{lem}\label{B:lem:slices}
Let $t$ be a constant, and let $Q\subset\R^n$ be a cube.

Suppose that $\partial_s \arr u(x,s)$ satisfies the Caccioppoli-like inequality
\begin{equation*}
\biggl(\int_{B(X,r)} \abs{\partial_s \arr u(x,s)}^p\,dx\,ds\biggr)^{1/p}
\leq \frac{c_0}{r}\biggl(\int_{B(X,2r)} \abs{\arr u(x,s)}^p\,dx\,ds\biggr)^{1/p}
\end{equation*}
whenever $B(X,2r)\subset\{(x,s):x\in 2Q, t-\ell(Q)<s<t+\ell(Q)\}$, for some $1\leq p\leq\infty$. 

Then
\begin{equation*}\biggl(\int_Q \abs{\arr u(x,t)}^p\,dx\biggr)^{1/p} \leq C(p,c_0)\,\ell(Q)^{-1/p}
\biggl( 
\int_{2Q}\int_{t-\ell(Q)}^{t+\ell(Q)} \abs{\arr u(x,s)}^p\,ds\,dx\biggr)^{1/p}.\end{equation*}

In particular, if $Lu=0$ in $2Q\times(t-\ell(Q),t+\ell(Q))$, and $L$ is an operator of the form \eqref{B:eqn:divergence} of order~$2m$ associated to $t$-independent coefficients~$A$ that satisfy the ellipticity conditions \eqref{B:eqn:elliptic:bounded} and~\eqref{B:eqn:elliptic}, then 
\begin{equation*}\int_Q \abs{\nabla^{m-j} \partial_t^k u(x,t)}^p\,dx \leq \frac{C(p)}{\ell(Q)} 
\int_{2Q}\int_{t-\ell(Q)}^{t+\ell(Q)} \abs{\nabla^{m-j} \partial_s^k u(x,s)}^p\,ds\,dx\end{equation*}
for any $0\leq j\leq m$, any $0< p < p_j^+$, and any integer $k\geq 0$, where $p_j^+$ is as in Theorem~\ref{B:thm:Meyers}.
\end{lem}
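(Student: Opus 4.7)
The plan is to prove the abstract slab-to-slice inequality first, and then deduce the estimate for solutions by verifying the Caccioppoli-like hypothesis for $\arr u = \nabla^{m-j}\partial_t^k u$. The abstract part is a fundamental-theorem-of-calculus argument in the $t$-direction, and the verification uses $t$-independence of the coefficients together with Lemma~\ref{B:lem:Caccioppoli} and Theorem~\ref{B:thm:Meyers}.

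For the abstract statement, fix $x \in Q$ and $s$ in an interval $I$ of length $\ell(Q)/2$ adjacent to~$t$; the fundamental theorem of calculus gives $|\arr u(x,t)| \leq |\arr u(x,s)| + \int_I |\partial_r \arr u(x,r)|\, dr$. Averaging over $s\in I$, raising to the $p$th power, and applying Jensen's inequality to the first summand and Hölder's inequality to the second produces
\begin{equation*}
|\arr u(x,t)|^p \leq \frac{C}{\ell(Q)}\int_I |\arr u(x,s)|^p\,ds + C\,\ell(Q)^{p-1}\int_I |\partial_r\arr u(x,r)|^p\,dr.
\end{equation*}
Integrating over $x \in Q$ handles the first term at once; for the second, I would cover the half-slab $Q \times I$ by boundedly many balls $B(X_i, r_i)$ of radius $r_i \approx \ell(Q)$ with $B(X_i, 2r_i) \subset 2Q \times (t-\ell(Q), t+\ell(Q))$, apply the Caccioppoli-like hypothesis to each ball, and sum. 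The factor of $r_i^{-p}$ produced by the hypothesis cancels the $\ell(Q)^{p-1}$ up to one surviving power $\ell(Q)^{-1}$, yielding the desired estimate. The symmetric interval below~$t$ is treated identically.

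For the application to solutions, the key input is that since the coefficients of~$L$ are $t$-independent, $v := \partial_t^k u$ is itself a solution of $Lv = 0$. Thus Lemma~\ref{B:lem:Caccioppoli} applied to~$v$ supplies the Caccioppoli-like hypothesis at $p = 2$, because the component $\partial_s(\nabla^{m-j}v)$ is controlled by $\nabla^{m-j+1}v$. To extend this to $0 < p < p_j^+$, I would invoke the lower-order reverse Hölder estimate~\eqref{B:eqn:Meyers:lower} of Theorem~\ref{B:thm:Meyers}: the $L^2$ norms of $\nabla^{m-j}v$ and $\nabla^{m-j+1}v$ on a given ball are comparable to the corresponding $L^p$ norms on a concentric ball of comparable radius, so the Caccioppoli-like inequality at exponent~$p$ follows by chaining three estimates (Meyers, $L^2$-Caccioppoli, Meyers). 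The main obstacle is the scaling bookkeeping: one must fit all balls produced by the two applications of Meyers inside the reference slab $2Q \times (t-\ell(Q), t+\ell(Q))$, and verify that all powers of $\ell(Q)$ combine to give exactly $\ell(Q)^{-1/p}$ on the right-hand side. The endpoint statements $p_1^+ = \infty$ in dimension $\dmn = 2$ or $\dmn = 3$ are inherited directly from the corresponding improvements of Theorem~\ref{B:thm:Meyers} noted immediately after its statement.
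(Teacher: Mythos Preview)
The paper does not give a self-contained proof of this lemma; it cites \cite{AlfAAHK11,BarHM15p,BarHM17pA}. Your direct argument is essentially the right one, and for $1 \leq p < p_j^+$ it is complete: the fundamental-theorem-of-calculus step is correct, the covering by balls of radius a small multiple of $\ell(Q)$ works, and your verification of the Caccioppoli-like hypothesis at exponent $p$ via the chain Meyers--$L^2$-Caccioppoli--Meyers (or with H\"older replacing one of the Meyers steps, according to whether $p<2$ or $p>2$) is correct, modulo tracking the successive ball-doublings so that everything fits inside the reference slab.

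There is, however, a gap in the range $0 < p < 1$. You do succeed in establishing the Caccioppoli-like hypothesis at every exponent $0 < p < p_j^+$, but the abstract part of the lemma is stated only for $p \geq 1$, and your proof of it genuinely requires this: the H\"older step $\bigl(\int_I|\partial_r\arr u|\,dr\bigr)^p \leq |I|^{p-1}\int_I|\partial_r\arr u|^p\,dr$ fails for $p < 1$. So you cannot feed your $p<1$ Caccioppoli estimate into the abstract lemma. The standard fix is to run the abstract lemma at exponent~$2$ on a grid of subcubes $Q_i\subset Q$ of sidelength $c\,\ell(Q)$, obtaining $\int_{Q_i}|\arr u(\cdot,t)|^2 \leq C\ell(Q_i)^{-1}\iint|\arr u|^2$, then use H\"older on the slice and the reverse H\"older inequality~\eqref{B:eqn:Meyers:lower} on the $(n{+}1)$-dimensional slab to convert both sides to $L^p$; summing over $i$ and tracking powers of $\ell(Q_i)$ gives the result with the correct factor $\ell(Q)^{-1}$.
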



\subsection{Duality results}



We will need the following duality results for layer potentials.

\begin{lem}\label{B:lem:dual:potentials}
Suppose that $L$ is an elliptic operator of order $2m$ associated with coefficients $\mat A$ that are $t$-independent in the sense of formula~\eqref{B:eqn:t-independent} and satisfy the ellipticity conditions \eqref{B:eqn:elliptic} and~\eqref{B:eqn:elliptic:bounded}.

Let $\arr f\in \dot W\!A^2_{m-1,1/2}(\R^n)$, let $\arr g$ lie in the dual space $(\dot W\!A^2_{m-1,1/2}(\R^n))^*$, and let $\arr \psi \in L^2(\R^n)$. Let $\tau>0$ and let $j\geq 0$ be an integer. Then 
\begin{align}\label{B:eqn:D:adjoint}
\langle \arr \psi, \nabla^{m}\partial_\tau ^j \D^{\mat A}\arr f(\,\cdot\,,\tau )\rangle_{\R^n}
&=
	(-1)^{j+1}\langle \M_{\mat A^*}^-(\partial_\dmn^j (\s^{L^*}_\nabla\arr \psi)_{-\tau }),\arr f\rangle_{\R^n}
,\\
\label{B:eqn:S:adjoint}
\langle \arr \psi ,\nabla^{m}\partial_\tau ^{j} 
	\s^{L} \arr g(\,\cdot\,,\tau)\rangle_{\R^n}
&=
	(-1)^j\langle\nabla^{m-1}  \partial_\dmn^j\s^{L^*}_\nabla\arr \psi(\,\cdot\,,-\tau) ,\arr g\rangle_{\R^n}
\end{align}
where $(\s^{L^*}_\nabla\arr \psi)_{-\tau }(x,s) = \s^{L^*}_\nabla\arr \psi(x,s-\tau)$.
\end{lem}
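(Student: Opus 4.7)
The plan is to reduce both identities to a single ``Newton potential'' identity: for every $\varphi\in C_c^\infty(\R^\dmn)$ and every $\tau\in\R$,
\begin{equation*}
\langle \nabla^m \varphi, \mat A^* \nabla^m (\s^{L^*}_\nabla\arr\psi)_{-\tau}\rangle_{\R^\dmn} = \langle \nabla^m \varphi(\,\cdot\,,\tau), \arr\psi\rangle_{\R^n}, \qquad (\star)
\end{equation*}
which identifies $(\s^{L^*}_\nabla\arr\psi)_{-\tau}$ distributionally as the ``Newton potential of $\arr\psi\otimes\delta_\tau$.''  Because $\mat A^*$ is $t$-independent by \eqref{B:eqn:t-independent}, it suffices to prove $(\star)$ at $\tau=0$.

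To prove $(\star)$ at $\tau=0$, split $\arr\psi=\sum_{\abs\alpha=m}\psi_\alpha\arr e_\alpha$ and analyze each term.  If $\alpha_\dmn\ge 1$, write $\alpha=\gamma+\vec e_\dmn$; the definition \eqref{B:eqn:S:S:vertical} gives $\s^{L^*}_\nabla(\psi_\alpha\arr e_\alpha)=-\partial_t\s^{L^*}(\psi_\alpha\arr e_\gamma)$.  The $t$-independence lets me commute $\partial_t$ past $\mat A^*$, and an integration by parts in $t$ transfers the derivative onto $\varphi$; applying the defining property of $\s^{L^*}$ (namely $\langle \nabla^m\Phi,\mat A^*\nabla^m\s^{L^*}\arr g\rangle_{\R^\dmn}=\langle \Tr_{m-1}^+\Phi,\arr g\rangle_{\R^n}$ for $\Phi\in\dot W^2_m(\R^\dmn)$) to $\Phi=\partial_t\varphi$ produces $\int\overline{\partial^\gamma\partial_t\varphi(x,0)}\,\psi_\alpha(x)\,dx=\int\overline{\partial^\alpha\varphi(x,0)}\,\psi_\alpha(x)\,dx$.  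If instead $\alpha_\dmn<m$, write $\alpha=\gamma+\vec e_j$ for some $1\le j\le n$; the definition \eqref{B:eqn:S:S:horizontal} gives $\s^{L^*}_\nabla(\psi_\alpha\arr e_\alpha)=-\s^{L^*}(\partial_{x_j}\psi_\alpha\arr e_\gamma)$, and the defining property of $\s^{L^*}$ combined with an integration by parts in $x_j$ yields the same expression.  Summing over $\alpha$ gives $(\star)$.

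For the single-layer formula at $j=0$, write $\s^L\arr g=\Pi^L(\1_+\arr G')$ with $\arr G'\in L^2(\R^\dmn_+)$ representing $\arr g$, and apply the Newton potential duality \eqref{B:eqn:newton:adjoint} to the approximation $\arr\Psi_\varepsilon(x,t)=\arr\psi(x)\eta_\varepsilon(t-\tau)$ of $\arr\psi\otimes\delta_\tau$:
\[\langle\arr\Psi_\varepsilon,\nabla^m\s^L\arr g\rangle_{\R^\dmn} = \langle \nabla^m\Pi^{L^*}\arr\Psi_\varepsilon,\arr G'\rangle_{\R^\dmn_+}.\]
As $\varepsilon\to 0$ the left side tends to $\langle\arr\psi,\nabla^m\s^L\arr g(\,\cdot\,,\tau)\rangle_{\R^n}$ by continuity of $\nabla^m\s^L\arr g$ away from $t=0$, while $(\star)$ identifies the limit $\Pi^{L^*}\arr\Psi_\varepsilon\to(\s^{L^*}_\nabla\arr\psi)_{-\tau}$, so the right side tends to $\langle \nabla^m(\s^{L^*}_\nabla\arr\psi)_{-\tau},\arr G'\rangle_{\R^\dmn_+}=\langle \Tr_{m-1}^+(\s^{L^*}_\nabla\arr\psi)_{-\tau},\arr g\rangle_{\R^n}=\langle\nabla^{m-1}\s^{L^*}_\nabla\arr\psi(\,\cdot\,,-\tau),\arr g\rangle_{\R^n}$ by the defining property of $\arr G'$.

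For the double-layer formula at $j=0$, use the representation $\D^{\mat A}\arr f=-F+u$ in $\R^\dmn_+$, where $\Tr_{m-1}^+F=\arr f$ and $u=\Pi^L(\1_+\mat A\nabla^m F)$.  The same argument applied to $u$ gives $\langle\arr\psi,\nabla^m u(\,\cdot\,,\tau)\rangle_{\R^n}=\langle \nabla^m(\s^{L^*}_\nabla\arr\psi)_{-\tau},\mat A\nabla^m F\rangle_{\R^\dmn_+}$, while $(\star)$ applied (after smooth compactly supported approximation) to $\varphi=F$ gives $\langle\arr\psi,\nabla^m F(\,\cdot\,,\tau)\rangle_{\R^n}=\langle \nabla^m(\s^{L^*}_\nabla\arr\psi)_{-\tau},\mat A\nabla^m F\rangle_{\R^\dmn}$.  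Subtracting cancels the $\R^\dmn_+$ integral and leaves $-\langle \nabla^m(\s^{L^*}_\nabla\arr\psi)_{-\tau},\mat A\nabla^m F\rangle_{\R^\dmn_-}$; since the singular slice $\{s=\tau\}$ lies outside $\R^\dmn_-$, the weighted bound \eqref{B:eqn:S:square:variant} yields $(\s^{L^*}_\nabla\arr\psi)_{-\tau}\in\dot W^2_m(\R^\dmn_-)$, and the variational formulation \eqref{B:eqn:Neumann:W2} identifies the remainder with $-\langle \M_{\mat A^*}^-((\s^{L^*}_\nabla\arr\psi)_{-\tau}),\arr f\rangle_{\R^n}$.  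The general case $j\ge 1$ follows by differentiating the $j=0$ identities $j$ times in $\tau$, using $\partial_\tau(\s^{L^*}_\nabla\arr\psi)_{-\tau}=-\partial_\dmn(\s^{L^*}_\nabla\arr\psi)_{-\tau}$ to produce the factors $(-1)^j$ and $(-1)^{j+1}$.  The principal technical obstacle is justifying the passage to the limit in the pairings over $\R^\dmn_+$, since $\nabla^m(\s^{L^*}_\nabla\arr\psi)_{-\tau}$ fails to lie in $L^2$ near $\{t=\tau\}$ and is controlled only by the weighted tent-space bound \eqref{B:eqn:S:square:variant}; this and the corresponding extension of $(\star)$ from $C_c^\infty(\R^\dmn)$ to $F\in\dot W^2_m(\R^\dmn)$ both require careful truncation near height $\tau$ together with \eqref{B:eqn:S:square:variant}.
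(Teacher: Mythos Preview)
Your approach shares the core ingredients of the paper's proof---Newton duality \eqref{B:eqn:newton:adjoint} and the translation-commutation $\Pi^{L^*}(\arr H_{-\tau})=(\Pi^{L^*}\arr H)_{-\tau}$ from $t$-independence---but two organizational choices create exactly the obstacle you flag at the end, and you have not shown how to resolve it. First, for $\D^{\mat A}$ you use the upper half-space representation \eqref{B:dfn:D:newton}; the paper instead uses the \emph{lower} half-space formula \eqref{B:eqn:D:newton:lower} with $\Tr_{m-1}^-F=\arr f$, so that in $\R^\dmn_+$ one has $\D^{\mat A}\arr f=-\Pi^L(\1_-\mat A\nabla^m F)$, and after dualizing the pairing lives entirely over $\R^\dmn_-$. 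Since the singularity of $(\s^{L^*}_\nabla\arr\psi)_{-\tau}$ sits at $t=\tau>0$, it lies outside $\R^\dmn_-$, so $(\s^{L^*}_\nabla\arr\psi)_{-\tau}\in\dot W^2_m(\R^\dmn_-)$ and \eqref{B:eqn:Neumann:W2} applies directly. Your route forces the pairing $\langle\nabla^m(\s^{L^*}_\nabla\arr\psi)_{-\tau},\mat A\nabla^m F\rangle_{\R^\dmn_+}$ through the singular slice, and the ``cancellation'' between your two $\R^\dmn_+$-integrals is only formal since neither need converge. Second, the paper does not work directly with $\s^{L^*}_\nabla\arr\psi$ during the duality step: it begins with arrays $\arr q$ indexed by $\abs\gamma=m-1$, smooth, compactly supported, with $\int\arr q=0$, so that $\arr q\in(\dot W\!A^2_{m-1,1/2})^*$ and $\s^{L^*}\arr q$ is an honest element of $\dot W^2_m(\R^\dmn)$. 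Every pairing is then between $L^2$ functions; the passage from $\arr q$ to $\arr\psi$ via \eqref{B:eqn:S:S:vertical}--\eqref{B:eqn:S:S:horizontal} and the density extension using \eqref{B:eqn:S:square:variant} are deferred to the very last line, after the integration has been localized to $\R^\dmn_-$.

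The same issue afflicts your single-layer argument: the step $\langle\nabla^m(\s^{L^*}_\nabla\arr\psi)_{-\tau},\arr G'\rangle_{\R^\dmn_+}=\langle\Tr_{m-1}^+(\s^{L^*}_\nabla\arr\psi)_{-\tau},\arr g\rangle_{\R^n}$ invokes the defining property of $\arr G'$, which requires $(\s^{L^*}_\nabla\arr\psi)_{-\tau}\in\dot W^2_m(\R^\dmn_+)$---but this fails because the singularity at $t=\tau$ is interior to $\R^\dmn_+$. Your proposed truncation is not obviously sufficient: the approximants $\Pi^{L^*}\arr\Psi_\varepsilon$ have $\dot W^2_m$-norms of order $\varepsilon^{-1/2}$, and \eqref{B:eqn:S:square:variant} by itself does not furnish the uniform control needed to pass to the limit in a pairing against a merely $L^2$ function over a region containing the singularity. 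The paper's choice of the lower half-space representation and of $(m{-}1)$-indexed test arrays is precisely what eliminates this difficulty.
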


The proof will be based on the adjoint relation \eqref{B:eqn:newton:adjoint} for the Newton potential; we remark that the result may also be proven by writing layer potentials in terms of the fundamental solution (see \cite{BarHM15p,BarHM17pA}) and using the symmetry properties thereof.

\begin{proof}[Proof of Lemma~\ref{B:lem:dual:potentials}]
We begin with formula~\eqref{B:eqn:D:adjoint}.

Let $\arr q$ be smooth, compactly supported and integrate to zero.
By Lemma~\ref{B:lem:slices},
\begin{align*}
\langle \arr q, \nabla^{m-1}\partial_\tau ^j \D^{\mat A}\arr f(\,\cdot\,,\tau )\rangle_{\R^n}
&=
\partial_\tau ^j\langle \arr q, \nabla^{m-1} \D^{\mat A}\arr f(\,\cdot\,,\tau )\rangle_{\R^n}
.\end{align*}
Let $F\in \dot W^2_m(\R^\dmn_-)$ with $\Tr_{m-1}^-F=\arr f$; by Lemma~\ref{B:lem:Besov}, such an~$F$ must exist.
By formula \eqref{B:eqn:D:newton:lower} for the double layer potential,
\begin{align*}
\langle \arr q, \nabla^{m-1}\partial_\tau ^j \D^{\mat A}\arr f(\,\cdot\,,\tau )\rangle_{\R^n}
&=
	-\partial_\tau ^j \langle \arr q, \nabla^{m-1}\Pi^L(\1_-\mat A\nabla^m F)(\,\cdot\,,\tau )\rangle_{\R^n}
.\end{align*}
For the remainder of this proof, let subscripts denote translation in the vertical direction. That is, if $\varphi$ is a function (or array of functions) and $s\in\R$, let $\varphi_s(x,t)=\varphi(x,t+s)$. Notice that $\langle \varphi,\psi_s\rangle_{\R^\dmn}=\langle \varphi_{-s},\psi\rangle_{\R^\dmn}$. Then
\begin{align*}
\langle \arr q, \nabla^{m-1}\partial_\tau ^j \D^{\mat A}\arr f(\,\cdot\,,\tau )\rangle_{\R^n}
&=
	-\partial_\tau ^j \langle \arr q, \Tr_{m-1}^+(\Pi^L(\1_-\mat A\nabla^m F))_\tau \rangle_{\R^n}
\end{align*}

Recall the definition~\eqref{B:dfn:S:newton} of the single layer potential and let  $\arr Q$ be an array of functions supported in $\R^\dmn_+$ such that $\s^{L^*}\arr q=\Pi^{L^*}\arr Q$.
Then
\begin{align*}
\langle \arr q, \nabla^{m-1}\partial_\tau ^j \D^{\mat A}\arr f(\,\cdot\,,\tau )\rangle_{\R^n}
&=
	-\partial_\tau ^j \langle \1_+\arr Q, \nabla^{m}(\Pi^L(\1_-\mat A\nabla^m F))_\tau \rangle_{\R^\dmn}
\end{align*}
and by the adjoint relation \eqref{B:eqn:newton:adjoint},
\begin{align*}
\langle \arr q, \nabla^{m-1}\partial_\tau ^j \D^{\mat A}\arr f(\,\cdot\,,\tau )\rangle_{\R^n}
&=
	-\partial_\tau ^j \langle \mat A^*\nabla^{m}\Pi^{L^*}((\1_+\arr Q)_{-\tau }),\nabla^m F\rangle_{\R^\dmn_-}
.\end{align*}

Recall that if $\arr H\in L^2(\R^\dmn)$ then $u=\Pi^L \arr H$ is the \emph{unique} function in $\dot W^2_m(\R^\dmn)$ that satisfies formula~\eqref{B:eqn:newton}. 
If $\varphi\in \dot W^2_m(\R^\dmn)$,
then
\begin{equation*}\langle \nabla^m\varphi, \mat A^*\nabla^m (\Pi^{L^*}(\1_+\arr Q))_{-\tau}\rangle_{\R^\dmn}
=\langle \nabla^m\varphi_{\tau}, \mat A^*_{\tau}\nabla^m \Pi^{L^*}(\1_+\arr Q)\rangle_{\R^\dmn}.\end{equation*}
But if $\mat A$ is $t$-independent, then $\mat A^*=\mat A^*_{\tau}$, and so
\begin{align*}
\langle \nabla^m\varphi, \mat A^*\nabla^m (\Pi^{L^*}(\1_+\arr Q))_{-\tau}\rangle_{\R^\dmn}
&=\langle \nabla^m\varphi_{\tau}, \mat A^*\nabla^m \Pi^{L^*}(\1_+\arr Q)\rangle_{\R^\dmn}
\\&=\langle \nabla^m\varphi_{\tau}, \1_+\arr Q\rangle_{\R^\dmn}
=\langle \nabla^m\varphi, (\1_+\arr Q)_{-\tau}\rangle_{\R^\dmn}
.\end{align*}
Thus, $u=(\Pi^{L^*}(\1_+\arr Q))_{-\tau}$ satisfies formula~\eqref{B:eqn:newton} with $H=(\1_+\arr Q)_{-\tau}$, and so we must have
\begin{equation*}\nabla^m \Pi^{L^*}((\1_+\arr Q)_{-\tau}) = \nabla^m (\Pi^{L^*}(\1_+\arr Q))_{-\tau}=\nabla^m(\s^{L^*}\arr q)_{-\tau}\end{equation*}
as $L^2(\R^\dmn)$-functions.

Thus,
\begin{align*}
\langle \arr q, \nabla^{m-1}\partial_\tau ^j \D^{\mat A}\arr f(\,\cdot\,,\tau )\rangle_{\R^n}
&=
	(-1)^{j+1}\langle \mat A^*\nabla^{m} (\partial_\dmn^j\s^{L^*}\arr q)_{-\tau },\nabla^m F\rangle_{\R^\dmn_-}
.\end{align*}
By formulas~\eqref{B:eqn:S:S:vertical} and~\eqref{B:eqn:S:S:horizontal},
if $\arr \psi$ is smooth and compactly supported then
\begin{align*}
\langle \arr \psi, \nabla^{m}\partial_\tau ^j \D^{\mat A}\arr f(\,\cdot\,,\tau )\rangle_{\R^n}
&=
	(-1)^{j+1}\langle \mat A^*\nabla^{m}(\partial_\dmn^j \s^{L^*}_\nabla\arr \psi)_{-\tau },\nabla^m F\rangle_{\R^\dmn_-}
\end{align*}
By the bound \eqref{B:eqn:S:square:variant} and the Caccioppoli inequality, we may extend this relation to all $\arr \psi\in L^2(\R^n)$. Recalling the definition of Neumann boundary values, we have that
\begin{align*}
\langle \arr \psi, \nabla^{m}\partial_\tau ^j \D^{\mat A}\arr f(\,\cdot\,,\tau )\rangle_{\R^n}
&=
	(-1)^{j+1}\langle \M_{\mat A^*}^-(\partial_\dmn^j \s^{L^*}_\nabla\arr \psi)_{-\tau },\arr f\rangle_{\R^n}
\end{align*}
as desired.

We now turn to formula~\eqref{B:eqn:S:adjoint}. With $\arr q$ and $\arr Q$ as above, and with $\s^L\arr g=\Pi^L \arr G$,
\begin{align*}
\langle \arr q ,\nabla^{m-1}\partial_\tau ^{j} 
	\s^{L} \arr g(\,\cdot\,,\tau )\rangle_{\R^n}
&=
	\partial_\tau ^{j} \langle \arr q ,\nabla^{m-1}
	\Pi^L(\1_+\arr G)(\,\cdot\,,\tau )\rangle_{\R^n}
\\&=
	\partial_\tau ^{j} \langle (\1_+\arr Q)_{-\tau } ,\nabla^{m}
	\Pi^L(\1_+\arr G)\rangle_{\R^\dmn}
\end{align*}
and by formula~\eqref{B:eqn:newton:adjoint} as before,
\begin{align*}
\langle \arr q ,\nabla^{m-1}\partial_\tau ^{j} 
	\s^{L} \arr g(\,\cdot\,,\tau )\rangle_{\R^n}
&=
	\partial_\tau ^{j} \langle \nabla^m \Pi^{L^*}((\1_+\arr Q)_{-\tau }) ,\arr G\rangle_{\R^\dmn_+}
\\&=
	\partial_\tau ^{j} \langle \nabla^m ( \s^{L^*}\arr q)_{-\tau } ,\arr G\rangle_{\R^\dmn_+}
.\end{align*}
By definition of~$\arr G$, we have that
\begin{align*}
\langle \arr q ,\nabla^{m-1}\partial_\tau ^{j} 
	\s^{L} \arr g(\,\cdot\,,\tau )\rangle_{\R^n}
&=
	\partial_\tau ^{j} \langle \Tr_{m-1}^+ ( \s^{L^*}\arr q)_{-\tau } ,\arr g\rangle_{\R^n}
\\&=
	\partial_\tau ^{j} \langle\nabla^{m-1}  \s^{L^*}\arr q(\,\cdot\,,-\tau) ,\arr g\rangle_{\R^n}
.\end{align*}
Applying formulas~\eqref{B:eqn:S:S:horizontal} and~\eqref{B:eqn:S:S:vertical}, we see that
\begin{align*}
\langle \arr \psi ,\nabla^{m}\partial_\tau ^{j} 
	\s^{L} \arr g(\,\cdot\,,\tau )\rangle_{\R^n}
&=
	\partial_\tau^j\langle\nabla^{m-1}  \s^{L^*}_\nabla\arr \psi(\,\cdot\,,-\tau) ,\arr g\rangle_{\R^n}
\\&=
	(-1)^j\langle\nabla^{m-1}  \partial_\dmn^j\s^{L^*}_\nabla\arr \psi(\,\cdot\,,-\tau) ,\arr g\rangle_{\R^n}
\end{align*}
as desired.
\end{proof}

\subsection{Estimates in terms of area integral norms of solutions}
\label{B:sec:basic}



The main goal of this paper is to show that, if $Lu=0$ in $\R^\dmn_+$ and $u$ satisfies certain area integral estimates, then the Dirichlet and Neumann boundary values $\Tr_{m-1}^+ u$ and $\M_{\mat A}^+ u$ exist and are bounded.

Recall from formula~\eqref{B:eqn:Neumann:E} that $\M_{\mat A}^+ u$ is given by
\begin{equation*}\langle \arr \psi,\M_{\mat A}^+ u\rangle_{\R^n} = \int_0^\infty \langle \mat A^*\nabla^m \mathcal{E}\arr \psi(\,\cdot\,,s),\nabla^m u(\,\cdot\,,s)\rangle_{\R^n}\,ds.\end{equation*}
If $u$ decays fast enough, then we have the following formula for $\Tr_{m-1}^+ u$:
\begin{equation*}\langle \arr \psi, \Tr_{m-1}^+ u\rangle_{\R^n} 
= -\int_0^\infty \langle \arr \psi, \nabla^{m-1}\partial_s u(\,\cdot\,,s)\rangle_{\R^n}\,ds
= \int_0^\infty \langle -\mathcal{O}^+\arr \psi, \nabla^{m} u(\,\cdot\,,s)\rangle_{\R^n}\,ds\end{equation*}
for some constant matrix~$\mathcal{O}^+$. Thus, we wish to bound terms of the form
\begin{equation*}\int_0^\infty \langle \arr \psi_s,\nabla^m u(\,\cdot\,,s)\rangle_{\R^n}\,ds\end{equation*}
for some arrays $\arr \psi_s$.

We will prove the following technical lemma; passing from Lemma~\ref{B:lem:control:lusin} to our main results is the main work of Sections~\ref{B:sec:Dirichlet} and~\ref{B:sec:Neumann}.

\begin{lem}
\label{B:lem:control:lusin}
Suppose that $L$ is an elliptic operator of order $2m$ associated with coefficients $\mat A$ that are $t$-independent in the sense of formula~\eqref{B:eqn:t-independent} and satisfy the ellipticity conditions \eqref{B:eqn:elliptic} and~\eqref{B:eqn:elliptic:bounded}.

Suppose that $Lu=0$ in $\R^\dmn_+$. Suppose further that $\nabla^m u\in L^2(\R^n\times(\sigma,\infty))$ for any $\sigma>0$, albeit with $L^2$ norm that may approach $\infty$ as $\sigma \to 0^+$.

Let $j\geq m$ be an integer. Let $\omega$ be a nonnegative real-valued function, and for each $s>0$, let $\arr \psi_s \in L^2(\R^n)$. 
Then
\begin{multline*}
\int_0^\infty
	s^{2j}\omega(s) \abs{\langle \arr \psi_s,\nabla^{m}\partial_s^{2j} u(\,\cdot\,,s)\rangle_{\R^n}}\,ds
\\\leq
C_{j}
\int_{4/3}^{4}
	\int_{\R^n}
	\mathcal{A}_2^-(\abs{t}^{j-2m+1}
		\partial_\dmn^{j-m} \s^{L^*}_\nabla\arr \psi_{\abs{t} r } )(x)
	\,\mathcal{A}_2^+(
	\Omega(t)\, t\,
		{\nabla^m  u}
	)(x)
	\,dx
	\,dr 
\end{multline*}
where $\Omega(t)=\sup\{\omega(s):t\leq s\leq 4t\}$, provided the right-hand side is finite.
\end{lem}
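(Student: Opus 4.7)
The proof combines the adjoint identity \eqref{B:eqn:S:adjoint} of Lemma~\ref{B:lem:dual:potentials}, the slice regularity of Lemma~\ref{B:lem:slices}, and a tent-space Cauchy--Schwarz argument on the Lusin cones $\Gamma_\pm(x)$.

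For fixed $s>0$, I will first convert the pointwise slice pairing $\langle \arr\psi_s,\nabla^m\partial_s^{2j}u(\cdot,s)\rangle_{\R^n}$ into an integral over a thin $t$-slab at height comparable to $s$. The conversion uses repeated integration by parts in~$t$, with the boundary terms at $t=\infty$ discarded because $\nabla^m u\in L^2(\R^n\times(\sigma,\infty))$ for each $\sigma>0$, followed by averaging over a window of size $\sim s$. The averaging parameter is precisely the $r\in[4/3,4]$ in the statement: after the substitution $|t|r=s$, a slab at height $s$ on the left corresponds to a slab at height $|t|\sim s/r$ in the lower half-space, into which we shortly reflect.

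Next, I will invoke the adjoint identity~\eqref{B:eqn:S:adjoint}. Although $u$ is not itself a single-layer potential, the $t$-independence of $\mat A$ guarantees that the translate $(\s^{L^*}_\nabla\arr\psi_s)_{-s}$ solves $L^*w=0$ in $\R^n\times((-\infty,0)\cup(0,\infty))$ (with a jump across the translated boundary). Integrating the bilinear form on $\mat A\nabla^m u$ and $\nabla^m(\s^{L^*}_\nabla\arr\psi_s)_{-s}$ by parts over the slab produced in the first step eliminates the interior terms (by $L^*$-harmonicity) and yields a boundary pairing that, read through~\eqref{B:eqn:S:adjoint}, expresses the original pointwise pairing as an integral of $\nabla^m u(y,t)$ against derivatives of the reflected $\s^{L^*}_\nabla\arr\psi_s$. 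Further integrations by parts distribute the $2j$ time derivatives so that exactly $j-m$ remain on $\s^{L^*}_\nabla\arr\psi_s$, producing the factor $\partial_\dmn^{j-m}\s^{L^*}_\nabla\arr\psi$ on the right-hand side. The remaining $j+m$ time derivatives on $u$ are traded, via Lemma~\ref{B:lem:slices} and iterated Caccioppoli inequalities, for a polynomial weight $|t|^{j+m}$; combined with scaling/Jacobian factors from the change of variables, this yields the weight $|t|^{j-2m+1}$ on the $\arr\psi$-side and the factor $t$ on the $u$-side. The weight $\Omega(t)=\sup_{s\in[t,4t]}\omega(s)$ arises by replacing the pointwise factor $\omega(s)$ by its supremum over the averaging slab, which is the minimal majorant compatible with the change of variables $|t|r=s$. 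A pointwise Cauchy--Schwarz in each Lusin cone, followed by integration in~$x$ and~$r$, then produces the stated bound.

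The main obstacle will be justifying the duality step in the full generality of the hypothesis, since identity~\eqref{B:eqn:S:adjoint} is stated for the single-layer formalism, whereas our $u$ is only assumed to satisfy $\nabla^m u\in L^2(\R^n\times(\sigma,\infty))$ for each $\sigma>0$. I will handle this via a limiting argument: apply the Green representation~\eqref{B:eqn:green} to the shifted solutions $u_\sigma(x,t)=u(x,t+\sigma)$, for which $\nabla^m u_\sigma\in L^2(\R^\dmn_+)$, carry out the above computation for each $\sigma>0$, and then pass to $\sigma\to 0^+$ using the finiteness of the right-hand side together with the area-integral bounds on single layer potentials from Theorem~\ref{B:thm:potentials}.
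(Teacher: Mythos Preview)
Your plan captures the correct high-level architecture---Green's formula, layer-potential duality, Caccioppoli estimates, and a tent-space Cauchy--Schwarz---but the central duality step is misstated, and you have relegated Green's formula to a peripheral role that it does not play in the actual argument.

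The paper does \emph{not} reach the layer potential $\s^{L^*}_\nabla\arr\psi$ by ``integrating the bilinear form on $\mat A\nabla^m u$ and $\nabla^m(\s^{L^*}_\nabla\arr\psi_s)_{-s}$ by parts over a slab.'' That maneuver, as you describe it, would produce Neumann-type boundary pairings on both faces of the slab involving both $u$ and the reflected single layer; it does not obviously recover the slice pairing $\langle\arr\psi_s,\nabla^m\partial_s^{2j}u(\cdot,s)\rangle$, and you have not explained why the interior terms vanish (both $Lu=0$ and $L^*w=0$ hold, but neither function is an admissible test function for the other). Instead, the paper sets $s=2\tau$ and applies Green's formula~\eqref{B:eqn:green} \emph{directly to} $\partial_\dmn^j u_\tau$ on the slice $\{t=\tau\}$, obtaining
\[
\nabla^m\partial_\dmn^{2j}u_\tau(\cdot,\tau) = -\nabla^m\partial_\dmn^j\D^{\mat A}(\Tr_{m-1}^+\partial_\dmn^j u_\tau)(\cdot,\tau) + \nabla^m\partial_\dmn^j\s^L(\M_{\mat A}^+\partial_\dmn^j u_\tau)(\cdot,\tau).
\]
Pairing both sides with $\arr\psi_{2\tau}$ and applying \emph{both} adjoint identities \eqref{B:eqn:D:adjoint} and~\eqref{B:eqn:S:adjoint}---not just~\eqref{B:eqn:S:adjoint}---converts the right-hand side into Neumann and Dirichlet pairings of $\partial_\dmn^j u_\tau$ against the translated potential $(\partial_\dmn^j\s^{L^*}_\nabla\arr\psi_{2\tau})_{-\tau}$. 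Those boundary pairings are then expanded using the weak definition~\eqref{B:eqn:Neumann:W2} with a cutoff $\eta_\tau$ supported in a thin slab $\abs{r}<\varepsilon\tau$ (upper half for one term, lower for the other), and only \emph{then} do the Caccioppoli and slice-regularity estimates enter to trade vertical derivatives for powers of~$\tau$.

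So Green's formula is the engine of the argument, not a limiting device to justify it after the fact (indeed the paper remarks that the hypothesis $\nabla^m u\in L^2(\R^n\times(\sigma,\infty))$ is used \emph{only} to invoke~\eqref{B:eqn:green}); and you need the double-layer adjoint identity~\eqref{B:eqn:D:adjoint} as well, which your plan omits entirely. The remainder of your outline---trading derivatives for powers of $t$ via Caccioppoli and Lemma~\ref{B:lem:slices}, introducing the averaging parameter $r$ via a change of variables, replacing $\omega$ by $\Omega$, and the cone Cauchy--Schwarz---matches the paper's approach.
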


\begin{proof}
Let $u_\tau(x,t)=u(x,t+\tau)$; by assumption, if $\tau>0$ then $\nabla^m u_\tau\in L^2(\R^\dmn_+)$. By the Caccioppoli inequality, if $\tau>0$ and $j\geq 0$ is an integer, then $\partial_\dmn^j u_\tau\in \dot W^2_m(\R^\dmn_+)$, and because $A$ is $t$-independent we have that $L(\partial_\dmn^j u_\tau)=0$ in $\R^\dmn_+$.

Let $s=2\tau$, so $u(x,s)=u_\tau(x,\tau)$. We will apply the Green's formula \eqref{B:eqn:green} to $\partial_\dmn^j u_\tau$.
Notice that by Lemma~\ref{B:lem:slices} and the Caccioppoli inequality, the map $\sigma \mapsto \nabla^m\partial_\dmn^j u_\tau(\,\cdot\,,\sigma )$ is continuous $(0,\infty)\mapsto L^2(\R^n)$. The Green's formula  is thus valid on horizontal slices $\R^n\times\{\tau \}$, and not only in~$\R^\dmn_+$.
Thus,
\begin{align*}
\langle \arr \psi_{2\tau},\nabla^{m}\partial_\dmn ^{2j} u_\tau(\,\cdot\,,\tau )\rangle_{\R^n}
&=
	-\langle \arr \psi_{2\tau} ,\nabla^{m}\partial_\dmn^{j} 
	\D^{\mat A} (\Tr_{m-1}^+ \partial_\dmn^j u_\tau)(\,\cdot\,,\tau )\rangle_{\R^n} 
	\\\nonumber&\qquad +
	\langle \arr \psi_{2\tau} ,\nabla^{m}\partial_\dmn ^{j} 
	\s^{L} (\M_{\mat A}^+ \partial_\dmn^j u_\tau)(\,\cdot\,,\tau )\rangle_{\R^n}
.\end{align*}

\begin{rmk} This application of the Green's formula is the only time in the proof of this lemma that we use the fact that $u_\sigma\in \dot W^2_m(\R^\dmn_+)$. We will also assume $v_\sigma \in \dot W^2_m(\R^\dmn_+)$ and $w_\sigma \in \dot W^2_m(\R^\dmn_+)$ in Theorems~\ref{B:thm:Dirichlet:1}, \ref{B:thm:Dirichlet:2}, \ref{B:thm:Neumann:1} and~\ref{B:thm:Neumann:2}; again, that assumption is necessary only in order to apply Lemma~\ref{B:lem:control:lusin}, and so only necessary to ensure validity of the Green's formula.
\end{rmk}

By Lemma~\ref{B:lem:dual:potentials}, we have that
\begin{align*}
\langle \arr \psi_{2\tau},\nabla^{m}\partial_\dmn ^{2j} u_\tau(\,\cdot\,,\tau )\rangle_{\R^n}
&=	
	(-1)^{j}\langle \M_{\mat A^*}^-(\partial_\dmn^j (\s^{L^*}_\nabla\arr \psi_{2\tau})_{-\tau }),\Tr_{m-1}^+ \partial_\dmn^j u_\tau\rangle_{\R^n}		
	\\\nonumber&\qquad +
	(-1)^j\langle\nabla^{m-1}  \partial_\dmn^j\s^{L^*}_\nabla\arr \psi_{2\tau}(\,\cdot\,,-\tau) ,\M_{\mat A}^+ \partial_\dmn^j u_\tau\rangle_{\R^n}
.\end{align*}
Recall the definition of the Neumann boundary operator for $\dot W^2_m(\R^\dmn_\pm)$-functions. Let $0<\varepsilon\ll 1$ be a small fixed absolute constant, to be chosen later. Let $\eta_\tau(r)=\eta(r/\varepsilon\tau)$, where $\eta:\R\mapsto\R$ is a smooth function with $\abs{\eta(r)}=1$ if $\abs{r}<1/2$ and $\abs{\eta(r)}=0$ if $\abs{r}>1$. Recalling the definition of $u_\tau$ and $(\partial_\dmn^j (\s^{L^*}_\nabla\arr \psi_{2\tau})_{-\tau })$, we have that
\begin{multline*}
(-1)^j\langle \arr \psi_{2\tau},\nabla^{m}\partial_\dmn^{2j} u_\tau(\,\cdot\,,\tau )\rangle_{\R^n}
\\\begin{aligned}
	&=
	\int_{-\varepsilon\tau}^0
	\int_{\R^n}
	\langle \mat A^*(z)\nabla^{m}\partial_r^j \s^{L^*}_\nabla\arr \psi_{2\tau}(z,r-\tau ),\nabla^m (\eta_\tau(r)\,\partial_r^j u(z,r+\tau))\rangle \,dz\,dr
	\\\nonumber&\qquad +
	\int_0^{\varepsilon\tau}
	\int_{\R^n}
	\langle \nabla^m (\eta_\tau(r)\partial_r^{j} \s^{L^*}_\nabla\arr \psi_{2\tau}(z,r-\tau )) ,\mat A(z)\nabla^m \partial_r^j u(z,r+\tau)\rangle\,dz\,dr
.\end{aligned}
\end{multline*}
Because $\eta$ is smooth, we have that $\abs{\partial_r^k\eta_\tau(r)}\leq C_{k,\varepsilon} \tau^{-k}$, and so if $j\geq m$, then
{\multlinegap=0pt \begin{multline*}
\abs{\langle \arr \psi_{2\tau},\nabla^{m}\partial_\dmn^{2j} u_\tau(\,\cdot\,,\tau )\rangle_{\R^n}}
\\\begin{aligned}
	&\leq C_{j,\varepsilon}\sum_{k=j-m}^j
	\int_{-\varepsilon\tau}^0
	\int_{\R^n}
	\abs{\nabla^{m}\partial_r^j \s^{L^*}_\nabla\arr \psi_{2\tau}(z,r-\tau )} \, \tau^{k-j}\abs{\nabla^m \partial_r^k u(z,r+\tau)} \,dz\,dr
	\\\nonumber&\qquad +
	C_{j,\varepsilon}\sum_{\ell=j-m}^j
	\int_0^{\varepsilon\tau}
	\int_{\R^n}
	\tau^{\ell-j}\abs{\nabla^m \partial_r^{\ell} \s^{L^*}_\nabla\arr \psi_{2\tau}(z,r-\tau ) }\, \abs{\nabla^m \partial_r^k u(z,r+\tau)}\,dz\,dr
.\end{aligned}
\end{multline*}}%
Thus
\begin{multline*}
\int_0^\infty
	s^{2j}\omega(s) \abs{\langle \arr \psi_s,\nabla^{m}\partial_s^{2j} u(\,\cdot\,,s)\rangle_{\R^n}}\,ds
\\\leq
C_{j,\varepsilon}\int_0^\infty
	\omega(2\tau) 
	\sum_{k,\ell}
	\int_{-\varepsilon\tau}^{\varepsilon\tau}
	\int_{\R^n}
	\tau^{\ell+k}\abs{\nabla^m \partial_r^{\ell} \s^{L^*}_\nabla\arr \psi_{2\tau}(z,r-\tau ) }\\\times \abs{\nabla^m \partial_r^k u(z,r+\tau)}\,dz\,dr
	\,d\tau
.\end{multline*}
Making the change of variables $r=\theta\tau$, we have that
\begin{multline*}
\int_0^\infty
	s^{2j}\omega(s) \abs{\langle \arr \psi_s,\nabla^{m}\partial_s^{2j} u(\,\cdot\,,s)\rangle_{\R^n}}\,ds
\\\leq
C_{j,\varepsilon}\int_0^\infty
	\omega(2\tau) 
	\sum_{k,\ell}
	\int_{-\varepsilon}^{\varepsilon}
	\int_{\R^n}
	\tau^{\ell+k+1}\abs{\nabla^m \partial_\dmn^{\ell} \s^{L^*}_\nabla\arr \psi_{2\tau}(z,(\theta-1)\tau ) }\\\times \abs{\nabla^m \partial_\dmn^k u(z,(1+\theta)\tau)}\,dz\,d\theta
	\,d\tau
\end{multline*}
and changing the order of integration we see that
\begin{multline*}
\int_0^\infty
	s^{2j}\omega(s) \abs{\langle \arr \psi_s,\nabla^{m}\partial_s^{2j} u(\,\cdot\,,s)\rangle_{\R^n}}\,ds
\\\leq
C_{j,\varepsilon}
	\sum_{k,\ell}
	\int_{-\varepsilon}^{\varepsilon}
	\int_0^\infty
	\int_{\R^n}
	\tau^{\ell+k+1}\abs{\nabla^m \partial_\dmn^{\ell} \s^{L^*}_\nabla\arr \psi_{2\tau}(z,(\theta-1)\tau ) }
	\\\times 
	\omega(2\tau) 
	\abs{\nabla^m \partial_\dmn^k u(z,(1+\theta)\tau)}\,dz
	\,d\tau\,d\theta
.\end{multline*}
Now, observe that if $F$ is a nonnegative function and $a>0$ then
\begin{equation}\label{B:eqn:area:0}
\int_{\R^n}\int_0^\infty F(z,\tau)\,d\tau\,dz = 
\frac{C_n}{a^n}
\int_{\R^n}\int_0^\infty \int_{\abs{z-x}<a\tau} F(z,\tau)\frac{1}{\tau^n}\,dz\,d\tau\,dx.\end{equation}
Thus,
\begin{multline*}
\int_0^\infty
	s^{2j}\omega(s) \abs{\langle \arr \psi_s,\nabla^{m}\partial_s^{2j} u(\,\cdot\,,s)\rangle_{\R^n}}\,ds
\\\leq
C_{j,\varepsilon}
	\sum_{k,\ell}
	\int_{-\varepsilon}^{\varepsilon}
	\int_{\R^n}
	\int_0^\infty
	\int_{\abs{z-x}<\varepsilon\tau}
	\tau^{\ell+k+1-n}\abs{\nabla^m \partial_\dmn^{\ell} \s^{L^*}_\nabla\arr \psi_{2\tau}(z,(\theta-1)\tau ) }
	\\\times 
	\omega(2\tau) 
	\abs{\nabla^m \partial_\dmn^k u(z,(1+\theta)\tau)}\,dz
	\,d\tau\,dx\,d\theta
.\end{multline*}
By H\"older's inequality,
\begin{multline*}
\int_0^\infty
	s^{2j}\omega(s) \abs{\langle \arr \psi_s,\nabla^{m}\partial_s^{2j} u(\,\cdot\,,s)\rangle_{\R^n}}\,ds
\\\leq
C_{j,\varepsilon}
	\sum_{k,\ell}
	\int_{-\varepsilon}^{\varepsilon}
	\int_{\R^n}
	\int_0^\infty
	\biggl(\int_{\abs{z-x}<\varepsilon\tau}
		\abs{\nabla^m \partial_\dmn^{\ell} \s^{L^*}_\nabla\arr \psi_{2\tau}(z,(\theta-1)\tau ) }^2
	\,dz\biggr)^{1/2}
	\\\times 
	\biggl(\int_{\abs{z-x}<\varepsilon\tau}
		\abs{\nabla^m \partial_\dmn^k u(z,(1+\theta)\tau)}^2
	\,dz\biggr)^{1/2}
	\tau^{\ell+k+1-n}\omega(2\tau) 
	\,d\tau\,dx\,d\theta
.\end{multline*}
By Lemma~\ref{B:lem:slices}, and recalling that $\abs\theta\leq \varepsilon$, we have that
\begin{equation*}
\int_{\abs{z-x}<\varepsilon\tau}
		\abs{\nabla^m \partial_\dmn^k u(z,(1+\theta)\tau)}^2
	\,dz
	\leq
	\frac{C_\varepsilon}{\tau}
\int_{(1-2\varepsilon)\tau}^{(1+2\varepsilon)\tau}
\int_{\abs{z-x}<2\varepsilon\tau}\!
		\abs{\nabla^m \partial_r^k u(z,r)}^2
	\,dz\,dr
.\end{equation*}
By the Caccioppoli inequality,
\begin{equation*}
\int_{\abs{z-x}<\varepsilon\tau}
		\abs{\nabla^m \partial_\dmn^k u(z,(1+\theta)\tau)}^2
	\,dz
	\leq
	\frac{C_\varepsilon}{\tau^{1+2k}}
\int_{(1-3\varepsilon)\tau}^{(1+3\varepsilon)\tau}
\int_{\abs{z-x}<3\varepsilon\tau}\!\!
		\abs{\nabla^m  u(z,r)}^2
	\,dz\,dr
.\end{equation*}
By Theorem~\ref{B:thm:Meyers}, we have that
\begin{multline*}
\biggl(\int_{\abs{z-x}<\varepsilon\tau}
		\abs{\nabla^m \partial_\dmn^k u(z,(1+\theta)\tau)}^2
	\,dz\biggr)^{1/2}
\\\leq
	\frac{C_\varepsilon}{\tau^{k+n/2+1}}
\int_{(1-4\varepsilon)\tau}^{(1+4\varepsilon)\tau}
\int_{\abs{z-x}<4\varepsilon\tau}
		\abs{\nabla^m  u(z,r)}
	\,dz\,dr
.\end{multline*}
Letting $r=\pi \tau$, we have that
\begin{multline*}
\biggl(\int_{\abs{z-x}<\varepsilon\tau}
		\abs{\nabla^m \partial_\dmn^k u(z,(1+\theta)\tau)}^2
	\,dz\biggr)^{1/2}
\\\leq
	\frac{C_\varepsilon}{\tau^{k+n/2}}
\int_{1-4\varepsilon}^{1+4\varepsilon}
\int_{\abs{z-x}<4\varepsilon\tau}
		\abs{\nabla^m  u(z,\pi\tau)}
	\,dz\,d\pi
.\end{multline*}
By an identical argument,
%
\begin{multline*}
\biggl(\int_{\abs{z-x}<\varepsilon\tau}
		\abs{\nabla^m \partial_\dmn^{\ell} \s^{L^*}_\nabla\arr \psi_{2\tau}(z,(\theta-1)\tau ) }^2
	\,dz\biggr)^{1/2}
\\\leq
	\frac{C_\varepsilon}{\tau^{2m+\ell-j+n/2}}
\int_{-1-4\varepsilon}^{-1+4\varepsilon}
\int_{\abs{z-x}<4\varepsilon\tau}
		\abs{\partial_\dmn^{j-m} \s^{L^*}_\nabla\arr \psi_{2\tau}(z,\kappa\tau ) }
	\,dz
	\,d\kappa
.\end{multline*}

Thus,
\begin{multline*}
\int_0^\infty
	s^{2j}\omega(s) \abs{\langle \arr \psi_s,\nabla^{m}\partial_s^{2j} u(\,\cdot\,,s)\rangle_{\R^n}}\,ds
\\\leq
C_{j,\varepsilon}
\int_{1-4\varepsilon}^{1+4\varepsilon}
\int_{-1-4\varepsilon}^{-1+4\varepsilon}
	\int_{\R^n}
	\int_0^\infty
	\int_{\abs{z-x}<4\varepsilon\tau}
		\abs{\nabla^m  u(z,\pi\tau)}\,\omega(2\tau) 
	\,dz
	\\\times 
	\tau^{1+j-2n-2m}
	\int_{\abs{z-x}<4\varepsilon\tau}
		\abs{\partial_\dmn^{j-m} \s^{L^*}_\nabla\arr \psi_{2\tau}(z,\kappa\tau ) }
	\,dz
	\,d\tau\,dx
	\,d\kappa\,d\pi
.\end{multline*}
Applying H\"older's inequality, we see that
\begin{multline*}
\int_0^\infty
	s^{2j}\omega(s) \abs{\langle \arr \psi_s,\nabla^{m}\partial_s^{2j} u(\,\cdot\,,s)\rangle_{\R^n}}\,ds
\\\leq
C_{j,\varepsilon}
\int_{1-4\varepsilon}^{1+4\varepsilon}
\int_{-1-4\varepsilon}^{-1+4\varepsilon}
	\int_{\R^n}
	\biggl(\int_0^\infty
	\int_{\abs{z-x}<4\varepsilon\tau}
	\omega(2\tau)^2\tau^{1-n}
		\abs{\nabla^m  u(z,\pi\tau)}^2
	\,dz
	\,d\tau\biggr)^{1/2}
	\\\times 
	\biggl(\int_0^\infty
	\int_{\abs{z-x}<4\varepsilon\tau}
		\abs{\partial_\dmn^{j-m} \s^{L^*}_\nabla\arr \psi_{2\tau}(z,\kappa\tau ) }^2\tau^{2j-4m-n+1}
	\,dz\,d\tau\biggr)^{1/2}
	\,dx
	\,d\kappa\,d\pi
.\end{multline*}
Apply the change of variables $t=\pi\tau$ in the first integral and $t=\kappa\tau$ in the second integral. We then have that
\begin{multline*}
\int_0^\infty
	s^{2j}\omega(s) \abs{\langle \arr \psi_s,\nabla^{m}\partial_s^{2j} u(\,\cdot\,,s)\rangle_{\R^n}}\,ds
\\\leq
C_{j,\varepsilon}
\int_{1-4\varepsilon}^{1+4\varepsilon}
\int_{-1-4\varepsilon}^{-1+4\varepsilon}
	\int_{\R^n}
	\biggl(\int_0^\infty
	\int_{\abs{z-x}<4\varepsilon t/\pi}\!\!\!
	\omega(2t/\pi)^2 t^{1-n}
		\abs{\nabla^m  u(z,t)}^2
	\,dz
	\,dt\biggr)^{1/2}
	\\\times 
	\biggl(\int_{-\infty}^0
	\int_{\abs{z-x}<4\varepsilon  t/\kappa}
		\abs{\partial_\dmn^{j-m} \s^{L^*}_\nabla\arr \psi_{2t/\kappa}(z,t ) }^2 \abs{t}^{2j-4m+1-n}
	\,dz\,dt\biggr)^{1/2}
	\,dx
	\,d\kappa\,d\pi
.\end{multline*}
Let $\varepsilon=1/8$. Because $\pi\geq 1-4\varepsilon=1/2$, we have that $4\varepsilon/\pi\leq 1$. Similarly, $4\varepsilon/\abs\kappa\leq 1$. Recall $\Omega(t)=\sup\{\omega(s):t\leq s\leq 4t\}$; then $\omega(2t/\pi)\leq \Omega(t)$. So
\begin{multline*}
\int_0^\infty
	s^{2j}\omega(s) \abs{\langle \arr \psi_s,\nabla^{m}\partial_s^{2j} u(\,\cdot\,,s)\rangle_{\R^n}}\,ds
\\\leq
C_{j}
\int_{-3/2}^{-1/2}
	\int_{\R^n}
	\biggl(\int_0^\infty
	\int_{\abs{z-x}<t}
	\Omega(t)^2 t^{1-n}
		\abs{\nabla^m  u(z,t)}^2
	\,dz
	\,dt\biggr)^{1/2}
	\\\times 
	\biggl(\int_{-\infty}^0
	\int_{\abs{z-x}<\abs{t}}
		\abs{\partial_\dmn^{j-m} \s^{L^*}_\nabla\arr \psi_{2t/\kappa}(z,t ) }^2 \abs{t}^{2j-4m+1-n}
	\,dz\,dt\biggr)^{1/2}
	\,dx
	\,d\kappa
.\end{multline*}
Recalling the definition of $\mathcal{A}_2$, we see that
\begin{multline*}
\int_0^\infty
	s^{2j}\omega(s) \abs{\langle \arr \psi_s,\nabla^{m}\partial_s^{2j} u(\,\cdot\,,s)\rangle_{\R^n}}\,ds
\\\leq
C_{j}
\int_{-3/2}^{-1/2}
	\int_{\R^n}
	\mathcal{A}_2^-(\abs{t}^{j-2m+1}
		\partial_\dmn^{j-m} \s^{L^*}_\nabla\arr \psi_{2{t}/\kappa} )(x)
	\,\mathcal{A}_2^+(
	\Omega(t)\, t\,
		{\nabla^m  u}
	)(x)
	\,dx
	\,d\kappa
.\end{multline*}
Making the change of variables $r=-2/\kappa$ completes the proof.
\end{proof}

\section{The Dirichlet boundary values of a solution}
\label{B:sec:Dirichlet}

In this section we will prove results pertaining to the Dirichlet boundary values. Specifically, we will prove the following two theorems.

\begin{thm} 
\label{B:thm:Dirichlet:1}
Suppose that $L$ is an elliptic operator associated with coefficients $\mat A$ that are $t$-independent in the sense of formula~\eqref{B:eqn:t-independent} and satisfy the ellipticity conditions \eqref{B:eqn:elliptic} and~\eqref{B:eqn:elliptic:bounded}.
Let $v\in \dot W^2_{m,loc}(\R^\dmn_+)$ and suppose that $Lv=0$ in $\R^\dmn_+$.

Suppose that
$\doublebar{\mathcal{A}_2^+  (t\,\nabla^m v)}_{L^{p}(\R^n)} < \infty$
for some $p$ with $1<p<p_1^+$, where $p_1^+$ is as in Theorem~\ref{B:thm:Meyers}, and where for some $k\geq 1$ the bound
\begin{align}
\label{B:eqn:S:lusin:p}
\doublebar{\mathcal{A}_2^- (t^k\,\partial_t^{m+k} \s^{L^*} \arr g)}_{L^{p'}(\R^n)}&\leq C(p') \doublebar{\arr g}_{L^{p'}(\R^n)}
\end{align}
is valid for all $\arr g\in L^{p'}(\R^n)$. Here $1/p+1/p'=1$. Suppose in addition that, for all $\sigma>0$, we have that $\nabla^m v\in L^2(\R^n\times(\sigma,\infty))$, albeit possibly with a norm that approaches $\infty$ as $\sigma\to 0^+$.

Then there is some function $P$ defined in $\R^\dmn_+$ with $\nabla^m P=0$ (that is, a polynomial of degree at most $m-1$) such that
\begin{align*}
\sup_{t>0} \doublebar{\nabla^{m-1} v(\,\cdot\,,t)-\nabla^{m-1} P}_{L^p(\R^n)}
&\leq 
C\doublebar{\mathcal{A}_2^+ (t\,\nabla^m v)}_{L^{p}(\R^n)}
,
\\
\lim_{t\to \infty} \doublebar{\nabla^{m-1} v(\,\cdot\,,t)-\nabla^{m-1} P}_{L^p(\R^n)}&=0
.\end{align*}
Furthermore, there is some array of functions $\arr f\in L^1_{loc}(\R^n)$ such that 
\begin{equation*}\doublebar{\nabla^{m-1} v(\,\cdot\,,t)- \arr f}_{L^p(\R^n)}\to 0\quad\text{ as $t\to 0^+$},\end{equation*}
and such that
\begin{equation*}
\doublebar{\arr f-\nabla^{m-1} P}_{L^p(\R^n)}\leq C\doublebar{\mathcal{A}_2^+ (t\,\nabla^m v)}_{L^{p}(\R^n)}
.\end{equation*}
\end{thm}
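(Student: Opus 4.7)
The plan is to combine a vertical Taylor identity with the technical estimate Lemma~\ref{B:lem:control:lusin}. For each multi-index $|\gamma|=m-1$, each $\arr\psi\in L^{p'}(\R^n)$, and each $0<t<T<\infty$, Taylor's formula with integral remainder of order $2j$ applied to $s\mapsto\partial^\gamma v(\cdot,s)$ at $s=T$ reads
\begin{equation*}
\partial^\gamma v(\cdot,t)=\sum_{\ell=0}^{2j}\frac{(t-T)^{\ell}}{\ell!}\partial^\gamma\partial_\dmn^{\ell} v(\cdot,T)+\int_T^t\frac{(t-s)^{2j}}{(2j)!}\partial^\gamma\partial_s^{2j+1}v(\cdot,s)\,ds.
\end{equation*}
The algebraic identity $\partial^\gamma\partial_s^{2j+1}v=\partial^\alpha\partial_s^{2j}v$, where $\alpha=\gamma+\vec e_\dmn$ has $|\alpha|=m$, rewrites the remainder integrand as a component of $\nabla^m\partial_s^{2j}v$, precisely the form that Lemma~\ref{B:lem:control:lusin} controls.

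Take $j=k+2m-1$, where $k\ge1$ is the exponent appearing in the hypothesis \myeqref{eqn:S:lusin:p}; then $j\ge m$ (so Lemma~\ref{B:lem:control:lusin} applies) and $j-2m+1=k$ (so the right-hand side matches the hypothesis). Lift $\arr\psi$ to an array $\arr{\tilde\psi}$ indexed by $|\alpha|=m$ via $\tilde\psi_\alpha=\psi_\gamma$ when $\alpha=\gamma+\vec e_\dmn$ and zero otherwise; formulas \myeqref{eqn:S:S:vertical}--\myeqref{eqn:S:S:horizontal} then give $\s^{L^*}_\nabla\arr{\tilde\psi}=-\partial_\dmn\s^{L^*}\arr\psi$, so the factor on the right-hand side of Lemma~\ref{B:lem:control:lusin} becomes $\mathcal{A}_2^-(|t|^k\,\partial_\dmn^{m+k}\s^{L^*}\arr\psi)$, whose $L^{p'}(\R^n)$ norm is bounded by $C\|\arr\psi\|_{L^{p'}}$ by hypothesis \myeqref{eqn:S:lusin:p}. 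Choosing the weight $\omega(s)=(s-t)^{2j}/((2j)!\,s^{2j})\,\mathbf 1_{t<s<T}$ (bounded by $1$, supported in $[t,T]$, so $\Omega(\tau)\le\mathbf 1_{[t/4,T]}(\tau)$), applying Lemma~\ref{B:lem:control:lusin} together with H\"older's inequality, and summing over $\gamma$ via duality yields
\begin{equation*}
\bigl\|\nabla^{m-1}v(\cdot,t)-\nabla^{m-1}R_T(\cdot,t)\bigr\|_{L^p(\R^n)}\le C\,\bigl\|\mathcal{A}_2^+\bigl(s\,\nabla^m v\,\mathbf 1_{s\in[t/4,T]}\bigr)\bigr\|_{L^p(\R^n)},
\end{equation*}
where $\nabla^{m-1}R_T$ is the array of Taylor polynomials $R_T^\gamma(x,t)=\sum_{\ell=0}^{2j}\tfrac{(t-T)^\ell}{\ell!}\partial^\gamma\partial_\dmn^{\ell}v(x,T)$.

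The principal obstacle is extracting a genuine polynomial $P$ of degree $\le m-1$ from $\nabla^{m-1}R_T(\cdot,t)$ as $T\to\infty$. For each $\ell\ge 1$ the summand $\tfrac{(t-T)^\ell}{\ell!}\partial^\gamma\partial_\dmn^\ell v(\cdot,T)$ is dominated, via Lemma~\ref{B:lem:slices}, the Caccioppoli inequality (Lemma~\ref{B:lem:Caccioppoli}), and Meyers's reverse H\"older (Theorem~\ref{B:thm:Meyers}), by a $T^{-(\ell-1)}$-weighted average of $\nabla^m v$ on the slab $\R^n\times[T/2,2T]$; the finiteness of $\|\mathcal{A}_2^+(t\nabla^m v)\|_{L^p}$ combined with absolute continuity of integration forces these contributions to vanish in $L^p$ modulo an $x$-independent constant array $\arr c$ as $T\to\infty$. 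Identifying $\arr c=\nabla^{m-1}P$ for the polynomial $P$ of degree $\le m-1$ with $\partial^\gamma P=c_\gamma$, and using monotone convergence to replace $\mathbf 1_{s\in[t/4,T]}$ by $\mathbf 1_{s\ge t/4}$, yields the uniform bound $\sup_{t>0}\|\nabla^{m-1}v(\cdot,t)-\nabla^{m-1}P\|_{L^p}\le C\|\mathcal{A}_2^+(t\nabla^m v)\|_{L^p}$; the decay $\lim_{t\to\infty}\|\nabla^{m-1}v(\cdot,t)-\nabla^{m-1}P\|_{L^p}=0$ follows since $\mathbf 1_{s\ge t/4}\to 0$ pointwise.

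The existence of the boundary trace at $0^+$ follows from a Cauchy-sequence argument: for $0<t_1<t_2$ small, applying the Taylor identity with finite endpoint $T=t_2$ yields $\|\nabla^{m-1}v(\cdot,t_1)-\nabla^{m-1}R_{t_2}(\cdot,t_1)\|_{L^p}\le C\|\mathcal{A}_2^+(s\nabla^m v\,\mathbf 1_{s\in[t_1/4,4t_2]})\|_{L^p}\to 0$. Writing $\nabla^{m-1}R_{t_2}(\cdot,t_1)=\nabla^{m-1}v(\cdot,t_2)+\sum_{\ell\ge 1}\tfrac{(t_1-t_2)^\ell}{\ell!}\nabla^{m-1}\partial_\dmn^\ell v(\cdot,t_2)$, the $\ell\ge1$ piece (with benign factor $|t_1-t_2|^\ell\le t_2^\ell$) is controlled by the same slab estimates and vanishes in $L^p$ as $t_2\to 0^+$, showing $\{\nabla^{m-1}v(\cdot,t)-\nabla^{m-1}P\}_{t>0}$ is Cauchy in $L^p(\R^n)$ as $t\to 0^+$; the limit defines $\arr f$ with $\arr f-\nabla^{m-1}P\in L^p(\R^n)$, and the estimate $\|\arr f-\nabla^{m-1}P\|_{L^p}\le C\|\mathcal{A}_2^+(t\nabla^m v)\|_{L^p}$ follows by passing to the limit in the uniform bound. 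The subtlest step throughout is the clean extraction of the constant array $\arr c$ from the $T\to\infty$ limit of $\nabla^{m-1}R_T$ — specifically, confirming that no $x$-dependent or non-constant polynomial-in-$t$ piece survives.
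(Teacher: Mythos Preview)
Your core machinery---Taylor expansion (equivalently, repeated integration by parts in $s$), the lift from $|\gamma|=m-1$ to $|\alpha|=m$ via $\alpha=\gamma+\vec e_\dmn$, and the application of Lemma~\ref{B:lem:control:lusin} with $j=k+2m-1$ so that the right-hand factor becomes $\mathcal{A}_2^-(|t|^k\partial_\dmn^{m+k}\s^{L^*}\arr\psi)$---is exactly what the paper does. The paper writes this as $f(T)-f(\tau)=\int_\tau^\infty\omega_{2j}(s)f^{(2j+1)}(s)\,ds$ with $\omega_{2j}$ the $2j$-fold antiderivative of $\mathbf 1_{(\tau,T)}$, but this is your Taylor remainder in disguise.

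The genuine gap is precisely the step you flag as ``subtlest'': you assert that $\nabla^{m-1}R_T(\cdot,t)$ converges to a \emph{constant} array $\arr c$ as $T\to\infty$, but you neither establish convergence nor constancy. Your decay argument for the $\ell\geq 1$ summands is correct and shows they go to zero in $L^p$ (not merely ``modulo a constant''), but the $\ell=0$ summand is $\nabla^{m-1}v(\cdot,T)$ itself, and nothing you have written forces this to converge or to be $x$-independent in the limit. The phrase ``modulo an $x$-independent constant array'' papers over exactly the point at issue.

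The fix has two parts. First, \emph{convergence}: use your own bound with both endpoints large. Taking $0<\tau<T$ and letting $\tau,T\to\infty$ (or both $\to 0^+$), the $\ell\geq 1$ terms in $R_T(\cdot,\tau)$ vanish and your remainder estimate gives $\|\nabla^{m-1}v(\cdot,\tau)-\nabla^{m-1}v(\cdot,T)\|_{L^p}\leq C\|\mathcal{A}_2^+(s\nabla^m v\,\mathbf 1_{s\geq\tau/4})\|_{L^p}+o(1)\to 0$, so $\{\nabla^{m-1}v(\cdot,T)\}$ is Cauchy at infinity and has a limit $\arr p$. Second, \emph{constancy of $\arr p$}: this is a genuinely separate argument that does not come from Lemma~\ref{B:lem:control:lusin}. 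The paper handles it in two cases. If $p<p_0^+$, one shows directly (formula~\eqref{B:eqn:Dirichlet:decay}) that $\|\nabla^m v(\cdot,\sigma)\|_{L^p}\to 0$ as $\sigma\to\infty$, hence $\nabla_\pureH\arr p=0$. If $p_0^+\leq p<p_1^+$, one instead shows that $\langle\varphi,\partial_k\partial^\gamma v(\cdot,\tau)\rangle_{\R^n}\to 0$ for each test function $\varphi$ and each $1\leq k\leq n$ via a local $L^2$-averaging estimate (formula~\eqref{B:eqn:local:2:global:p}), so the weak derivative $\partial_k p_\gamma$ vanishes. Without one of these arguments, your proposal does not close.
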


\begin{thm} 
\label{B:thm:Dirichlet:2}
Suppose that $L$ is an elliptic operator associated with coefficients $\mat A$ that are $t$-independent in the sense of formula~\eqref{B:eqn:t-independent} and satisfy the ellipticity conditions \eqref{B:eqn:elliptic} and~\eqref{B:eqn:elliptic:bounded}.
Let $w\in \dot W^2_{m,loc}(\R^\dmn_+)$ and suppose that $Lw=0$ in $\R^\dmn_+$.

Suppose that
$\doublebar{\mathcal{A}_2^+  (t\,\nabla^m \partial_t w)}_{L^{p}(\R^n)} < \infty$
for some $p$ with $1<p<p_0^+$, where $p_0^+=p_L^+$ is as in Theorem~\ref{B:thm:Meyers}, and where for some $k\geq 1$ the bound
\begin{align}
\label{B:eqn:S:lusin:variant}
\doublebar{\mathcal{A}_2^- (t^{k}\,\partial_t^{m+k-1} \s^{L^*}_\nabla \arr h)}_{L^{p'}(\R^n)}&\leq C(p') \doublebar{\arr h}_{L^{p'}(\R^n)}
\end{align}
is valid for all $\arr h\in L^{p'}(\R^n)$. Suppose in addition that $\nabla^m \partial_\dmn w\in L^2(\R^n\times(\sigma,\infty))$ for all $\sigma>0$.

Then there is some array $\arr p$ of functions defined on $\R^n$ such that
\begin{align*}
\sup_{t>0} \doublebar{\nabla^{m} w(\,\cdot\,,t)-\arr p}_{L^p(\R^n)}
&\leq 
C\doublebar{\mathcal{A}_2^+ (t\,\nabla^m \partial_t w)}_{L^{p}(\R^n)}
,\\
\lim_{t\to\infty} \doublebar{\nabla^{m} w(\,\cdot\,,t)-\arr p}_{L^p(\R^n)}
&=0
.\end{align*}
Furthermore, there is some array of functions $\arr f\in L^1_{loc}(\R^n)$ such that 
\begin{equation*}\doublebar{\nabla^{m} w(\,\cdot\,,t)- \arr f}_{L^p(\R^n)}\to 0\quad\text{ as $t\to 0^+$},\end{equation*}
and such that
\begin{equation*}
\doublebar{\arr f-\arr p}_{L^p(\R^n)}\leq C\doublebar{\mathcal{A}_2^+ (t\,\nabla^m \partial_t w)}_{L^{p}(\R^n)}
.\end{equation*}

If $\nabla^{m} w(\,\cdot\,,t)\in L^p(\R^n)$ for some $t>0$, then $\arr p=0$. Otherwise, the array $\arr p$ satisfies $\arr p(x)=\nabla^m P(x,t)$, for some function $P\in \dot W^2_{m,loc}(\R^\dmn_+)$ such that
\begin{itemize}
\item $P(x,t)=P_1(x,t)+P_2(x)$,
\item $P_1(x,t)$ is a polynomial of degree at most~$m$ (and so $\nabla^m P_1$ is constant),
\item $P_2\in \dot W^2_{m,loc}(\R^n)$,
\item $LP=0$ and so 
\begin{equation}
\label{B:eqn:LP}
\sum_{\substack{\abs\alpha=\abs\beta=m\\\alpha_\dmn=\beta_\dmn=0}} \partial_\pureH^\alpha(A_{\alpha\beta}(x)\partial_\pureH^\beta P_2(x)) = -\sum_{\substack{\abs\alpha=\abs\beta=m\\\alpha_\dmn=0}} \partial_\pureH^\alpha(A_{\alpha\beta}(x) \partial^\beta P_1).\end{equation}
\end{itemize}
\end{thm}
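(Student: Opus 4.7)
The plan is to treat separately the ``vertical'' components of $\nabla^m w$ (those indexed by $\alpha$ with $\alpha_\dmn\geq 1$) and the ``horizontal'' components ($\alpha_\dmn=0$), handle each type with a different tool, and then combine to produce $\arr p$ and realize it as $\nabla^m P$ with the claimed structure.

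For vertical components, I would apply Theorem~\ref{B:thm:Dirichlet:1} to $v:=\partial_\dmn w$. One checks the hypotheses easily: $Lv=0$ by $t$-independence of $\mat A$; the area-integral bound $\mathcal{A}_2^+(t\nabla^m v)\in L^p(\R^n)$ is identical to our hypothesis; and the $L^2$-tail condition is given. The ancillary layer-potential estimate \eqref{B:eqn:S:lusin:p} required by Theorem~\ref{B:thm:Dirichlet:1} is derivable from our hypothesis \eqref{B:eqn:S:lusin:variant} by specializing $\arr h=h\,\arr e_{\gamma+\vec e_\dmn}$ and invoking the identity $\partial_\dmn\s^{L^*}(h\arr e_\gamma)=-\s^{L^*}_\nabla(h\arr e_{\gamma+\vec e_\dmn})$ from \eqref{B:eqn:S:S:vertical}; Theorem~\ref{B:thm:Meyers} supplies $p_0^+\leq p_1^+$, so $p<p_1^+$. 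The conclusion produces a polynomial $Q$ of degree at most $m-1$ in $(x,t)$ such that $\nabla^{m-1}\partial_\dmn w(\,\cdot\,,t)-\nabla^{m-1}Q$ is uniformly bounded in $L^p$, tends to zero in $L^p$ as $t\to\infty$, and has an $L^p$-limit as $t\to 0^+$. Writing $\partial^\alpha w=\partial^{\alpha-\vec e_\dmn}\partial_\dmn w$ for $\alpha_\dmn\geq 1$ immediately controls every such component of $\nabla^m w$, with limits equal to the corresponding components of $\nabla^{m-1}Q$.

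For horizontal components $\partial^\alpha_\pureH w$ with $|\alpha|=m$ and $\alpha_\dmn=0$, I would proceed via a Taylor expansion in $t$ of order $2m$, anchored at some reference height $s_0>0$:
\begin{equation*}
\partial^\alpha_\pureH w(x,t)=\sum_{k=0}^{2m-1}\frac{(t-s_0)^k}{k!}\partial^\alpha_\pureH\partial_\dmn^k w(x,s_0)+\int_{s_0}^t\frac{(t-r)^{2m-1}}{(2m-1)!}\partial^\alpha_\pureH\partial_r^{2m}w(x,r)\,dr.
\end{equation*}
Testing against $\arr\psi\in L^{p'}(\R^n)$ and noting that $\partial^\alpha_\pureH\partial_r^{2m}w$ is one component of $\nabla^m\partial_r^{2m}w$, I would bound the remainder using Lemma~\ref{B:lem:control:lusin} with $u=\partial_\dmn w$, $j=m$, and $\arr\psi_s$, $\omega(s)$ tailored so that the left-hand side of the lemma reproduces the desired integral; the $\s^{L^*}_\nabla$ factor on the right-hand side is then controlled by \eqref{B:eqn:S:lusin:variant}, and the $\mathcal{A}_2^+$ factor by the given area-integral bound on $\nabla^m\partial_\dmn w$. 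This yields a uniform estimate of the desired form together with decay of the remainder as $t$ tends jointly to $0^+$ or $\infty$. The non-decaying polynomial-in-$t$ part of the Taylor expansion furnishes the horizontal components of $\arr p$, and a Cauchy-sequence argument yields the $L^p$-limits.

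Combining the two steps gives the constant array $\arr p$. Setting $P_1(x,t)=\int_0^t Q(x,s)\,ds$ produces a polynomial of degree at most $m$ in $(x,t)$ whose $\nabla^m$ is a constant array realizing the non-horizontal components of $\arr p$ (and the constant-in-$x$ part of the horizontal components). The residual $\arr p-\nabla^m P_1$ has only horizontal nonzero components; because these arise as limits of derivatives of the single function $w-P_1$, they satisfy the Whitney compatibility conditions, hence there is $P_2\in\dot W^2_{m,\mathrm{loc}}(\R^n)$ with $\nabla^m_\pureH P_2$ realizing them. The identity $LP=0$---which unpacks to \eqref{B:eqn:LP} because $\partial_\dmn P_2=0$---follows by noting that $w-P$ is a solution with $\nabla^m(w-P)\to 0$ in $L^p$ as $t\to\infty$; testing $L(w-P)=-LP$ against $\varphi\in C^\infty_0(\R^\dmn)$ and passing to the limit forces $LP=0$. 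Finally, if $\nabla^m w(\,\cdot\,,t_0)\in L^p(\R^n)$ for some $t_0>0$, then $\arr p\in L^p(\R^n)$; since nonzero constants are not in $L^p(\R^n)$, the constant part of $\arr p$ (coming from $\nabla^m P_1$) must vanish, and then $LP_2=0$ combined with $\nabla^m P_2\in L^p(\R^n)$ yields $P_2$ polynomial of degree $\leq m-1$ by a Liouville argument for the purely horizontal $2m$-th order elliptic operator, so $\arr p=0$. The main obstacle is the second step: Lemma~\ref{B:lem:control:lusin} controls only the very high derivative $\partial_s^{2j}u$ with $j\geq m$, whereas the Taylor remainder involves $\partial_r^{2m}w$, and matching the two requires a delicate calibration of $\arr\psi_s$, $\omega(s)$, and $s_0$, together with verification that the accumulated polynomial-in-$t$ Taylor coefficients do assemble into $\nabla^m P$ with the claimed structure.
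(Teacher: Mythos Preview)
Your vertical/horizontal split is correct in spirit but unnecessarily complicated, and the horizontal step as written has a gap. The paper handles \emph{all} components of $\nabla^m w$ at once by the same mechanism that proves Theorem~\ref{B:thm:Dirichlet:1}: for arbitrary $\arr h\in L^{p'}(\R^n)$ one writes
\[
\langle \arr h,\nabla^{m}w(\,\cdot\,,T)-\nabla^{m}w(\,\cdot\,,\tau )\rangle_{\R^n}
=\int_\tau^\infty\omega_{2j}(s)\,\langle \arr h,\nabla^{m}\partial_s^{2j}(\partial_\dmn w)(\,\cdot\,,s)\rangle_{\R^n}\,ds
\]
after repeated integration by parts in~$s$ (the boundary terms at $\infty$ vanish by the decay estimate~\eqref{B:eqn:Dirichlet:decay} plus Caccioppoli). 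Lemma~\ref{B:lem:control:lusin} with $u=\partial_\dmn w$, $\arr\psi_s\equiv\arr h$, and $\omega(s)=\omega_{2j}(s)/s^{2j}$ then feeds directly into hypothesis~\eqref{B:eqn:S:lusin:variant}, yielding the Cauchy bound for every component simultaneously. Your detour through Theorem~\ref{B:thm:Dirichlet:1} for the vertical components is valid but redundant, since that theorem is proved by exactly this argument.

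The gap in your horizontal step is the fixed anchor~$s_0$. The Taylor remainder then carries the weight $(t-r)^{2m-1}/r^{2j}$, which is \emph{not} bounded uniformly as $t\to\infty$, so the $\mathcal{A}_2^+(\Omega(t)\,t\,\nabla^m u)$ factor on the right of Lemma~\ref{B:lem:control:lusin} is not dominated and the Cauchy argument does not close. The paper's cure---integrating the indicator $\1_{(\tau,T)}$ repeatedly, i.e., anchoring the Taylor expansion at the moving endpoint~$\tau$---produces $\omega_{2j}(s)/s^{2j}\leq C$ uniformly, with $\Omega(t)\to 0$ as $\tau\to\infty$ or $T\to 0^+$. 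This is precisely the ``delicate calibration'' you flagged as the main obstacle, and it is essential rather than merely technical.

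Once the limits $\arr p$ and $\arr f$ exist, the structural identification $\arr p=\nabla^m P$ with $P=P_1+P_2$, the verification $LP=0$, and the Liouville argument for $\arr p=0$ proceed essentially as you outline; the paper carries out the last step by applying Meyers' inequality in~$\R^n$ to the purely horizontal operator~$L_\parallel$, letting the radius go to infinity.
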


\begin{rmk}\label{B:rmk:Dirichlet:W2} 
If $W_{p,q}$ is as in formula~\eqref{B:eqn:w:bound}, then by Theorem~\ref{B:thm:Meyers} and Lemma~\ref{B:lem:slices}, we have that 
\[\doublebar{\nabla^{m} w(\,\cdot\,,t)}_{L^p(\R^n)}\leq C_{p,q} W_{p,q}(t)\]
and so as in Theorem~\ref{B:thm:intro}, finiteness of $W_{p,q}(t)$ suffices to imply $w=w-P$.

Recall that if $1<p<2+\varepsilon$, then by Theorem~\ref{B:thm:potentials} the bounds \eqref{B:eqn:S:lusin:p} and~\eqref{B:eqn:S:lusin:variant} are valid.

Furthermore, we claim that if $\mathcal{A}_2^+  (t\,\nabla^m v)\in L^p(\R^n)$ or $\mathcal{A}_2^+  (t\,\nabla^m \partial_t w)\in L^p(\R^n)$ for some $p\leq 2$, then $\nabla^m v\in L^2(\R^n\times(\sigma,\infty))$ or $\nabla^m \partial_\dmn w\in L^2(\R^n\times(\sigma,\infty))$ for all $\sigma>0$.

To verify this, let $u=v$ or $u=\partial_\dmn w$. Let $c\geq 1$ and let $K$ be a large integer such that $c2^{-K}<\sigma$. Then
\begin{equation*}\int_{\R^n}\int_{\sigma}^\infty\abs{\nabla^m u(x,t)}^2\,dt\,dx 
\leq
\sum_{j=-K}^\infty \sum_{Q\in \mathcal{G}_j} 
\int_Q \int_{c\ell(Q)}^{2c\ell(Q)}\abs{\nabla^m u(x,t)}^2\,dt\,dx
\end{equation*} 
where $\mathcal{G}_j$ is a grid of cubes in $\R^n$ of side-length $2^j$. But if $c$ is large enough, then for any $y\in Q$,
\begin{equation*}\int_Q \int_{c\ell(Q)}^{2c\ell(Q)}\abs{\nabla^m u(x,t)}^2\,dt\,dx
\leq C\ell(Q)^{n-1}\int_0^\infty \int_{\abs{x-y}<s} \abs{\nabla^m u(x,t)}^2\frac{1}{t^{n-1}}\,dt\,dx\end{equation*}
and so
\begin{align*}\int_{\R^n}\int_{\sigma}^\infty\abs{\nabla^m u(x,t)}^2\,dt\,dx 
&\leq
\sum_{j=-K}^\infty 
\sum_{Q\in \mathcal{G}_j} 
C\ell(Q)^{n-1}\biggl(\fint_Q\mathcal{A}_2(t\nabla^m u)^p\biggr)^{2/p}
\\&\leq
C\sum_{j=-K}^\infty 2^{j(n-1-2n/p)}
\sum_{Q\in \mathcal{G}_j} 
\biggl(\int_Q\mathcal{A}_2(t\nabla^m u)^p\biggr)^{2/p}
.\end{align*} 
If $p\leq 2$ then 
\begin{equation*}\sum_{Q\in \mathcal{G}_j} 
\biggl(\int_Q\mathcal{A}_2(t\nabla^m u)^p\biggr)^{2/p}
\leq 
\biggl(\int_{\R^n}\mathcal{A}_2(t\nabla^m u)^p\biggr)^{2/p}
\end{equation*} 
and also $n-1-2n/p\leq -1$, so we may choose $K$ such that
\begin{equation*}\int_{\R^n}\int_{\sigma}^\infty\abs{\nabla^m u(x,t)}^2\,dt\,dx \leq \frac{C}{\sigma^{2n/p+1-n}}\doublebar{\mathcal{A}_2(t\nabla^m u)}_{L^p(\R^n)}^2.\end{equation*}
Thus, $u \in \dot W^2_m(\R^n\times(\sigma,\infty))$, albeit with norm that increases to infinity as $\sigma\to 0^+$. 
\end{rmk}

In a forthcoming paper, we hope to establish the bounds \eqref{B:eqn:S:lusin:p} and~\eqref{B:eqn:S:lusin:variant} for at least some values of $p'<2$; thus, we have formulated Theorems~\ref{B:thm:Dirichlet:1} and~\ref{B:thm:Dirichlet:2} so as to easily be able to improve the range of~$p$ in Theorems~\ref{B:thm:Dirichlet:1} and~\ref{B:thm:Dirichlet:2}.

The remainder of this section will be devoted to a proof of Theorems~\ref{B:thm:Dirichlet:1} and~\ref{B:thm:Dirichlet:2}.

Fix $\sigma>0$ and let $\mathcal{G}_\sigma$ be a grid of cubes of side-length~$\sigma/c$ for some large constant~$c$.
By Lemma~\ref{B:lem:slices}, if 
$p<p^+_1$ then
\begin{align*}
\int_{\R^n}\abs{\nabla^{m-1}\partial_\sigma v(x,\sigma)}^p\,dx 
&\leq 
	\sum_{Q\in\mathcal{G}_\sigma} \int_{Q}\abs{\nabla^{m-1} \partial_\sigma v(x,\sigma)}^p\,dx
\\&\leq 
	C\sigma^{-1}\sum_{Q\in\mathcal{G}_\sigma} \int_{2Q}\int_{\sigma-\sigma/4c}^{\sigma+\sigma/4c} \abs{\nabla^{m-1}\partial_\sigma  v(x,t)}^p\,dx\,dt
.\end{align*}
By H\"older's inequality or Theorem~\ref{B:thm:Meyers},
\begin{multline*}
\int_{\R^n}\abs{\nabla^{m-1}\partial_\sigma v(x,\sigma)}^p\,dx 
\\\leq 
	C\sigma^{n-p}\sum_{Q\in\mathcal{G}_\sigma} \biggl(\int_{4Q}\int_{\sigma-\sigma/2c}^{\sigma+\sigma/2c} \abs{\nabla^{m-1} \partial_\sigma v(x,t)}^2\frac{1}{\sigma^{n-1}}\,dx\,dt\biggr)^{p/2}
\end{multline*}
and by the definition~\eqref{B:eqn:lusin} of $\mathcal{A}_2$, if $c$ is large enough then
\begin{align*}
\int_{\R^n}\abs{\nabla^{m-1}\partial_\sigma v(x,\sigma)}^p\,dx 
&\leq 
	C\sigma^{n-p}\sum_{Q\in\mathcal{G}_\sigma} 
	\fint_Q \mathcal{A}_2(\1_{\sigma/2<t<3\sigma/2} t\nabla^{m} v(\,\cdot\,,t))^p
\\&= 
	C\sigma^{-p}
	\int_{\R^n} \mathcal{A}_2(\1_{\sigma/2<t<3\sigma/2} \sigma\nabla^{m} v)^p
.\end{align*}

Later in this paper we will use the fact that if $p<p_0^+$, then by the same argument,
\begin{align}
\label{B:eqn:Dirichlet:decay}
\int_{\R^n}\abs{\nabla^{m} v(x,\sigma)}^p\,dx 
&\leq 
	C\sigma^{-p}
	\int_{\R^n} \mathcal{A}_2(\1_{\sigma/2<t<3\sigma/2} \sigma\nabla^{m} v)^p
.\end{align}

So by the dominated convergence theorem, $\sigma\nabla^{m-1}\partial_\sigma v(\,\cdot\,,\sigma)\to 0$ as $\sigma\to \infty$ strongly in $L^p(\R^n)$. By the Caccioppoli inequality and Theorem~\ref{B:thm:Meyers}, if $k\geq 1$ is an integer then $\sigma^{k}\nabla^{m-1}\partial_\sigma^k v(\,\cdot\,,\sigma)\to 0$ (and in particular is bounded) in $L^p(\R^n)$ as $\sigma\to\infty$. Similarly, if $p<p^+_0$ and $k$ is large enough then $\sigma^{k}\nabla^m \partial_\sigma^{k} w(\,\cdot\,,\sigma)\to 0$ in $L^p(\R^n)$ as $\sigma\to \infty$.

Let $\arr g\in L^{p'}(\R^n)$ and $\arr h\in L^{p'}(\R^n)$ be bounded and compactly supported. 
Choose some $T>\tau >0$. We wish to bound the quantities
\begin{equation*}
\langle \arr g,\nabla^{m-1}v(\,\cdot\,,T)-\nabla^{m-1}v(\,\cdot\,,\tau )\rangle_{\R^n}
\quad\text{and}\quad
\langle \arr h,\nabla^{m}w(\,\cdot\,,T)-\nabla^{m}w(\,\cdot\,,\tau )\rangle_{\R^n}
\end{equation*}
in terms of $\tau $, $T$ and $\doublebar{\arr g}_{L^{p'}(\R^n)}$ or $\doublebar{\arr h}_{L^{p'}(\R^n)}$. Doing so will allow us to control $\nabla^{m-1}v(\,\cdot\,,T)-\nabla^{m-1}v(\,\cdot\,,\tau )$ or $\nabla^{m}w(\,\cdot\,,T)-\nabla^{m}w(\,\cdot\,,\tau )$; in particular, we will show that these quantities go to zero as $\tau \to\infty$ or $T\to 0^+$, and so we will see that $\nabla^{m-1}v$ or $\nabla^m w$ approaches a limit at $\infty$ and at zero.

Let $f(s)=\langle \arr g,\nabla^{m-1}v(\,\cdot\,,s)\rangle_{\R^n}$; observe that the $j$th derivative $f^{(j)}(s)$ of $f(s)$ satisfies $f^{(j)}(s)=\langle \arr g,\nabla^{m-1}\partial_s^{j} v(\,\cdot\,,s)\rangle_{\R^n}$. 
Let $\omega_0(s)=1$ if $\tau <s<T$ and $\omega_1(s)=0$ if $s>T$. 
Thus, 
\begin{align*}
\langle \arr g,\nabla^{m-1}v(\,\cdot\,,T)-\nabla^{m-1}v(\,\cdot\,,\tau )\rangle_{\R^n}
&=
	\int_\tau ^\infty
	\omega_0(s) f'(s)\,ds
.\end{align*}
Integrating from $\tau $ to $\infty$ will be somewhat simpler than integrating from $\tau $ to~$T$.
We wish to integrate by parts so that the right-hand side involves higher derivatives of $f(s)$.
Let $\omega_j(s)=\int_\tau^s \omega_{j-1}$. Using induction, it is straightforward to establish that
\begin{equation*}
\omega_j(s)\leq \begin{cases} 
	\frac{1}{j!} (s-\tau )^{j}, & \tau <s<T,\\ 
	\frac{1}{(j-1)!} (s-\tau )^{j-1}(T-\tau ), & T<s
.\end{cases}\end{equation*}
By our bound on~$\omega_{j}$ and by definition of $f(s)$, if $s$ is large enough then
\begin{equation*}\omega_{j}(s)\abs{f^{(j)}(s)}\leq C(j)\,s^{j}\,\doublebar{\arr g}_{L^{p'}(\R^n)} \doublebar{\nabla^{m-1} \partial_\dmn^j v(\,\cdot\,,s)}_{L^p(\R^n)}
\end{equation*}
and if $j\geq 1$, then by our above bounds on~$\doublebar{\nabla^{m-1} \partial_\dmn^k v(\,\cdot\,,s)}_{L^p(\R^n)}$, the right-hand side converges to zero as $s\to \infty$. Thus, we may integrate by parts and see that, for any $j\geq 0$,
\begin{align*}
\langle \arr g,\nabla^{m-1}v(\,\cdot\,,T)-\nabla^{m-1}v(\,\cdot\,,\tau )\rangle_{\R^n}
&=
	\int_\tau^\infty
	\omega_{2j}(s) f^{(2j+1)}(s)\,ds
\\&=
	\int_\tau^\infty
	\omega_{2j}(s) \langle \arr g,\nabla^{m-1}\partial_s^{2j+1} v(\,\cdot\,,s)\rangle_{\R^n}\,ds
.\end{align*}
Similarly,
\begin{align*}
\langle \arr h,\nabla^{m}w(\,\cdot\,,T)-\nabla^{m}w(\,\cdot\,,\tau )\rangle_{\R^n}
&=
	\int_\tau^\infty
	\omega_{2j}(s) \langle \arr h,\nabla^{m}\partial_s^{2j+1} w(\,\cdot\,,s)\rangle_{\R^n}\,ds
.\end{align*}

By formula~\eqref{B:eqn:S:S:vertical}, there is a constant rectangular matrix $\mathcal{O}^+$ such that
\begin{equation}
\label{B:eqn:S:S:variant}
\partial_t\s^{L} \arr g(x,t)=-\s^{L}_\nabla(\mathcal{O}^+\arr g)(x,t)
\end{equation}
for any array $\arr g$ of functions indexed by multiindices $\gamma$ with $\abs\gamma=m-1$. Then
\begin{align*}
\langle \arr g,\nabla^{m-1}v(\,\cdot\,,T)-\nabla^{m-1}v(\,\cdot\,,\tau )\rangle_{\R^n}
&=
	\int_\tau^\infty
	\omega_{2j}(s) \langle \mathcal{O}^+\arr g,\nabla^{m}\partial_s^{2j} v(\,\cdot\,,s)\rangle_{\R^n}\,ds
.\end{align*}

By Lemma~\ref{B:lem:control:lusin} with $\psi_s\equiv \mathcal{O}^+\arr g$ for all~$s$ and with $\omega(s)=\omega_{2j}(s)/s^{2j}$, and by formula~\eqref{B:eqn:S:S:variant}, we have that
\begin{multline*}
\abs{\langle \arr g,\nabla^{m-1}v(\,\cdot\,,T)-\nabla^{m-1}v(\,\cdot\,,\tau )\rangle_{\R^n}}
\\\begin{aligned}
&\leq 
	C\int_{\R^n}
	\mathcal{A}_2^-(\abs{t}^{j+1-2m}\partial_\dmn^{j-m+1}
	\s^{L^*} \arr g)(x)
	\,
	\mathcal{A}_2(t\,\Omega(t)\,\nabla^m v)(x)
	\,dx
\end{aligned}\end{multline*}
where $\Omega(s)$ satisfies the bounds
\begin{equation*}
\Omega(s)\leq C\begin{cases} 
	0, & s<\tau ,\\
	(1-\tau /s)^{2j}, & \tau <s<T,\\ 
	(1-\tau /s)^{2j-1}(T/s-\tau /s), & T<s
.\end{cases}\end{equation*}
By the Caccioppoli inequality and the bound \eqref{B:eqn:S:lusin:p}, if $j-m+1\geq k+m$, then $\mathcal{A}_2^-(\abs{t}^{j+1-2m}\partial_\dmn^{j-m+1}
	\s^{L^*} \arr g)\in L^{p'}(\R^n)$ with $L^{p'}$ norm at most $C\doublebar{\arr g}_{L^{p'}(\R^n)}$.

By assumption, $\mathcal{A}_2 (t\,{\nabla^m v})\in L^p(\R^n)$. Because $\Omega(s)$ is bounded, we have that
\begin{equation*}\mathcal{A}_2 \bigl(t\,\Omega(t)\,\nabla^m v\bigr)(x) \leq 
C\mathcal{A}_2 (t\,{\nabla^m v})(x).\end{equation*}
Furthermore, if $\mathcal{A}_2 (t\,{\nabla^m v})(x)<\infty$ (true for almost every $x\in\R^n$) then 
\begin{equation*}\mathcal{A}_2 \bigl(t\,\Omega(t)\,\nabla^m v\bigr)(x) \to 0 \quad\text{ as $\tau \to\infty$ or $T\to 0^+$.}\end{equation*}
By the dominated convergence theorem, this means that 
\begin{equation*}\doublebar[big]{\mathcal{A}_2 \bigl(t\,\Omega(t)\,\nabla^m v\bigr)}_{L^p(\R^n)} \to 0 \quad\text{ as $\tau \to\infty$ or $T\to 0^+$.}\end{equation*}
Thus, for any sequence of positive numbers $t_j$ that converge to either zero or infinity, the sequence $\{\nabla^{m-1} v(\,\cdot\,,t_j)\}_{j=1}^\infty$ is a Cauchy sequence in $L^p(\R^n)$, and so the limits $\arr p=\lim_{t\to \infty}\nabla^{m-1} v(\,\cdot\,,t)$ and $\arr f=\lim_{t\to \infty}\nabla^{m-1} v(\,\cdot\,,t)-\arr p$ exist. Furthermore, $\doublebar{\nabla^{m-1} v(\,\cdot\,,t)-\arr p}_{L^p(\R^n)}$ is bounded, uniformly in~$t$.

Similarly, the limits $\arr p'=\lim_{t\to \infty}\nabla^{m} w(\,\cdot\,,t)$ and $\arr f'=\lim_{t\to \infty}\nabla^{m} w(\,\cdot\,,t)-\arr p'$ exist. Furthermore, $\doublebar{\nabla^{m} w(\,\cdot\,,t)-\arr p'}_{L^p(\R^n)}$ is bounded, uniformly in~$t$.

It remains only to produce statements about the limits $\arr p$, $\arr p'$ at~$\infty$. 

If $p<p_0^+$, then by formula~\eqref{B:eqn:Dirichlet:decay}, $\nabla^m v(\,\cdot\,,t)\to 0$ in $L^p(\R^n)$ as $t\to \infty$, and so $\nabla_\pureH \arr p=0$ and so $\arr p$ is a constant array. But $\arr p$ is constant if and only if $\arr p=\nabla^{m-1} P$ for some polynomial $P$ of degree at most $m-1$, as desired.

If $p_0^+\leq p<p_1^+$, we will need a more complicated argument. By Lemma~\ref{B:lem:slices}, if $\Delta(x,c\tau)=\{y\in\R^n:\abs{x-y}<c\tau\}$ is a disk in $\R^n$ (and not in $\R^\dmn$) then
\begin{equation*}
\fint_{\Delta(x,\tau/2)} \abs{\nabla^m v(y,\tau)}^2\,dy
\leq
C \tau^{-1} \mathcal{A}_2(\1_{\tau/2<t<3\tau/2}t\nabla^m v)(x).
\end{equation*}
Notice that if $p=2$ then the left-hand side is simply $\doublebar{\nabla^m v(\,\cdot\,,\tau)}_{L^2(\R^n)}^2$.

If $\arr\varphi$ is an array of test functions, then
\begin{align*}
\int_{\R^n}\abs{\langle \arr\varphi,\nabla^m v(\,\cdot\,,\tau)\rangle}
&=
	C\int_{\R^n}\fint_{\Delta(x,\tau/2)} \abs{\langle \arr \varphi(y), \nabla^m v(y,\tau)\rangle}\,dy\, dx
\\&\leq 
	\frac{C}{\tau}\int_{\R^n}
	\biggl(\fint_{\Delta(x,\tau/2)} \abs{ \arr \varphi}^2\biggr)^{1/2}\mathcal{A}_2(\1_{\tau/2<t<3\tau/2}t\nabla^m v)(x) \, dx.
\end{align*}
Now, if $\varphi$ is smooth and compactly supported, then the function
\begin{equation*}f(x)=\biggl(\fint_{\Delta(x,\tau/2)} \abs{ \arr \varphi}^2\biggr)^{1/2}\end{equation*}
is also bounded and compactly supported, and in particular lies in $L^{p'}(\R^n)$. Thus, if $1\leq k\leq n$ and $\abs\gamma=m-1$, then $\abs{\langle \varphi,\partial_k \partial^\gamma v(\,\cdot\,,\tau)\rangle_{\R^n}}\to 0$ as $\tau\to \infty$. Therefore, $\abs{\langle \varphi,\partial_k p_\gamma\rangle_{\R^n}}= 0$; thus, $\arr p$ has a weak derivative that is everywhere zero, and so $\arr p$ must again be a constant.

We may strengthen this bound by using the Caccioppoli inequality: if $\ell\geq 0$ is an integer, then
\begin{multline}\label{B:eqn:local:2:global:p}
\tau^\ell\int_{\R^n}\abs{\langle\arr \varphi(x),\nabla^m\partial_\tau^\ell v(x,\tau)\rangle}\,dx
\\\leq 
	\frac{C}{\tau}\int_{\R^n}
	\biggl(\fint_{\Delta(x,\tau/2)} \abs{ \arr \varphi}^2\biggr)^{1/2}\mathcal{A}_2(\1_{\tau/2<t<3\tau/2}t\nabla^m v)(x) \, dx.
\end{multline}
We will need this bound in Section~\ref{B:sec:Neumann}; we observe that it is valid whenever $Lv=0$ in $\R^\dmn_+$, $1\leq p\leq \infty$ and the right-hand side is finite.

We now turn to $w$ and $\arr p'$.
By a similar argument, $\nabla^{m}\partial_t w(\,\cdot\,,t)\to 0$ and so $\nabla^{m-1}\partial_\perp w$ approaches a constant $\arr p'_1$. There is some polynomial $P_1$ with $\arr p'_1=\nabla^{m-1}\partial_\dmn P_1$ and $\nabla^m \partial_t P_1(x,t)=0$. We are left with $\arr p'_2=\lim_{t\to\infty}\nabla^m_\pureH w(\,\cdot\,,t)$. Since $w(\,\cdot\,,t)$ is locally in $\dot W^2_m(\R^n)$, we have that $\arr p'_2=\nabla_\pureH^m P_2$ for some function $P_2$ defined on $\R^n$. Thus, $\arr p = \nabla^m P$ where $P(x,t)=P_1(x,t)+P_2(x)$, and $\nabla^m \partial_t P(x,t)=0$, as desired.

We next check the claim $LP=0$.
Let $\varphi$ be smooth and compactly supported. Then
\begin{align*}
\langle \nabla^m\varphi,\mat A\nabla^m P\rangle_{\R^\dmn}
&= 
\int_{-\infty}^\infty \langle \nabla^m \varphi(\,\cdot\,,t),\mat A\nabla^m P\rangle_{\R^n}\,dt 
\\&= 
\lim_{s\to \infty}\int_{-\infty}^\infty \langle \nabla^m \varphi(\,\cdot\,,t),\mat A\nabla^m w(\,\cdot\,,s+t)\rangle_{\R^n}\,dt 
\\&= 
\lim_{s\to \infty}\langle \nabla^m \varphi_{-s},\mat A\nabla^m w\rangle_{\R^\dmn_+}
=0
\end{align*}
because $Lw=0$. (Here $\varphi_{-s}(x,t)=\varphi(x,t-s)$; if $\varphi$ is supported in $\R^n\times(-T,T)$ then $\varphi_{-s}$ is supported in $\R^n\times(s-T,s+T)$.) Thus, $LP=0$ as well.

Finally, suppose that $\nabla^m w(\,\cdot\,,t)\in L^p(\R^n)$ for some $t>0$. Then $\nabla^m P(\,\cdot\,,t)\in L^p(\R^n)$ as well. Because $\nabla^{m-1} \partial_\dmn P=\nabla^{m-1}\partial_\dmn P_1$ is constant, it must equal zero if $p<\infty$.
Thus $\nabla^m P=\nabla^m_\pureH P$ is constant in~$t$. 

Let $\mat A_\pureH$ satisfy
$(A_\pureH)_{\alpha\beta}=A_{\alpha\beta}$ if $\alpha_\dmn=\beta_\dmn=0$.
Let $L_\pureH$ be the elliptic operator associated to~$\mat A_\pureH$; the $L_\pureH$ acts on functions defined on $\R^n$ rather than $\R^\dmn$.

Then $L_\pureH P=0$ in $\R^n$ (regarding $P$ as a function of $\R^n$). If $p<p^+_{L_\pureH}$, then by Theorem~\ref{B:thm:Meyers} applied in $\R^n$, there is some $q$ with $p<q<p^+_{L_\pureH}$ and such that
\begin{equation*}\doublebar{\nabla_\pureH^m P}_{L^q(\Delta(0,R))} \leq C R^{n/q-n/p}\doublebar{\nabla_\pureH^m P}_{L^p(\Delta(0,2R))}\end{equation*}
for all $R>0$, where $\Delta(x,r)=\{y\in\R^n:\abs{x-y}<r\}$. Letting $R\to \infty$ we see that $\nabla_\pureH^m P=0$ almost everywhere, as desired.


\section{The Neumann boundary values of a solution}
\label{B:sec:Neumann}

In this section we will prove results pertaining to the Neumann boundary values as defined by formula~\eqref{B:eqn:Neumann:E}, that is, defined in terms of a specific extension operator~$\mathcal{E}$. Specifically, we will prove the following two theorems.

\begin{thm}\label{B:thm:Neumann:1}
Suppose that $L$ is an elliptic operator associated with coefficients $\mat A$ that are $t$-independent in the sense of formula~\eqref{B:eqn:t-independent} and satisfy the ellipticity conditions \eqref{B:eqn:elliptic} and~\eqref{B:eqn:elliptic:bounded}.
Let $v\in \dot W^2_{m,loc}(\R^\dmn_+)$ and suppose that $Lv=0$ in $\R^\dmn_+$.

Suppose that ${\mathcal{A}_2(t\nabla^m v)}\in {L^p(\R^n)}$ for some $1<p<\infty$. 
Further assume that for any $\sigma>0$ we have that $\nabla^m v\in L^2(\R^n\times(\sigma,\infty))$.

Then for all $\varphi$ smooth and compactly supported, we have that
\begin{equation*}{\langle \mat A\nabla^m v(\,\cdot\,,t),\nabla^m \mathcal{E}\varphi(\,\cdot\,,t)\rangle_{\R^n}}\end{equation*}
represents an absolutely convergent integral for any fixed $t>0$ and is continuous in~$t$.

Furthermore,
\begin{multline*}
\sup_{0<\varepsilon<T}
\abs[bigg]{
\int_\varepsilon^T
\langle \mat A\nabla^m v(\,\cdot\,,t),\nabla^m \mathcal{E}\varphi(\,\cdot\,,t)\rangle_{\R^n}\,dt 
}
\\\leq 
C \doublebar{\nabla_\pureH \Tr_{m-1}^+\varphi}_{L^{p'}(\R^n)}
\doublebar{\mathcal{A}_2(t\nabla^m v)}_{L^p(\R^n)}
\end{multline*}
and the limit
\begin{equation*}\lim_{\varepsilon\to 0^+} \lim_{T\to \infty} \int_\varepsilon^T
\langle \mat A\nabla^m v(\,\cdot\,,t),\nabla^m \mathcal{E}\varphi(\,\cdot\,,t)\rangle_{\R^n}\,dt 
\end{equation*}
exists,
and so we have the bound
\begin{align*}
\abs{\langle\M_{\mat A}^+v,\Tr_{m-1}\varphi\rangle_{\R^\dmn_+}}
&\leq C \doublebar{\nabla_\pureH \Tr_{m-1}^+\varphi}_{L^{p'}(\R^n)}
\doublebar{\mathcal{A}_2(t\nabla^m v)}_{L^p(\R^n)}
.\end{align*}
\end{thm}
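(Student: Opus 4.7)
The strategy is to reduce the $t$-integral to an application of Lemma~\ref{B:lem:control:lusin}, exploiting the fact that the extension operator $\mathcal{E}\varphi$ is built from the Schwartz-type convolution $\mathcal{Q}_t^m=e^{-(-t^2\Delta_\pureH)^m}$. For the slice claims I would argue directly: for each fixed $t>0$, $\nabla^m\mathcal{E}\varphi(\,\cdot\,,t)$ is Schwartz in~$x$ since each $\varphi_k$ is smooth and compactly supported and $\mathcal{Q}_t^m$ is convolution with a rescaled Schwartz kernel, while Lemma~\ref{B:lem:slices} combined with Theorem~\ref{B:thm:Meyers} places $\nabla^m v(\,\cdot\,,t)$ locally in $L^q$ for any $q<p_0^+$; H\"older's inequality then gives absolute convergence of the slice pairing and continuity in~$t$.

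For the main bound I would integrate by parts $2j$ times in~$t$ for some large integer~$j$, transferring the vertical derivatives onto~$v$. Since $\mat A$ is $t$-independent, each $\partial_t^k v$ is again a solution of $Lu=0$, and by Caccioppoli its local $L^2$ norm decays at infinity. Expanding
\begin{equation*}
\mathcal{E}\varphi(x,t)=\sum_{k=0}^{m-1}\frac{t^k}{k!}\mathcal{Q}_t^m\varphi_k(x),
\end{equation*}
differentiating, and repeatedly using the symbol-level identity that $(t\partial_t)^\ell e^{-(t|\xi|)^{2m}}$ and $(t\partial_\pureH)^\ell e^{-(t|\xi|)^{2m}}$ are again Schwartz multipliers at scale~$t$, every component of $\nabla^m\mathcal{E}\varphi(\,\cdot\,,t)$ can be rewritten as a finite sum of terms of the form $t^N\arr b\,\widetilde{\mathcal{Q}}_t h$ with $N\geq 0$, $\arr b\in L^\infty$, $\widetilde{\mathcal{Q}}_t$ a scale-$t$ Schwartz convolution, and $h$ a single component of $\nabla_\pureH \Tr_{m-1}^+\varphi$ (the key accounting is that each extra horizontal derivative above the $(m-k)$-th applied to $\varphi_k$ is paired with a compensating factor of~$t$, and the remaining vertical derivatives are eliminated by the integration by parts). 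Modulo boundary terms at $t=\varepsilon$ and $t=T$, the time integral thus becomes a finite sum of expressions of the form
\begin{equation*}
\int_\varepsilon^T s^{2j}\omega_j(s)\,\langle\arr\psi_s,\nabla^m\partial_s^{2j}v(\,\cdot\,,s)\rangle_{\R^n}\,ds,\qquad \arr\psi_s=\arr b\,\widetilde{\mathcal{Q}}_s h.
\end{equation*}

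Applying Lemma~\ref{B:lem:control:lusin} with $\omega=\omega_j/s^{2j}$ then reduces the matter, uniformly in $r\in[4/3,4]$, to bounding $\int_{\R^n}\mathcal{A}_2^-(|t|^{j-2m+1}\partial_\dmn^{j-m}\s^{L^*}_\nabla(\arr b\,\widetilde{\mathcal{Q}}_{|t|r}h))(x)\,\mathcal{A}_2(t\,\nabla^m v)(x)\,dx$. Choosing $j$ large enough that $k=j-2m$ falls in the admissible range of Theorem~\ref{B:thm:potentials} (and absorbing the bounded factor~$r$ into the Schwartz kernel), the estimate~\eqref{B:eqn:S:Schwartz:p} controls the first $\mathcal{A}_2^-$ in $L^{p'}$ by $C\|h\|_{L^{p'}}\leq C\|\nabla_\pureH\Tr_{m-1}^+\varphi\|_{L^{p'}}$, and H\"older's inequality with the $L^p$ hypothesis on $\mathcal{A}_2(t\nabla^m v)$ closes the main estimate. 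The existence of the limit as $\varepsilon\to 0^+$ and $T\to\infty$ follows by a Cauchy-in-$(\varepsilon,T)$ argument identical in structure to the one in Section~\ref{B:sec:Dirichlet}: the boundary terms at $t=T$ vanish by the decay of $\partial_t^k v(\,\cdot\,,T)$ in local $L^2$ established via Caccioppoli and \eqref{B:eqn:Dirichlet:decay}, while replacing $t\nabla^m v$ by $\1_{\varepsilon<t<T}\,t\nabla^m v$ in the area integral and invoking dominated convergence yields the required Cauchy property.

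The principal obstacle will be the combinatorial bookkeeping in the integration by parts: verifying that every term produced by differentiating $\mathcal{E}\varphi$ and then transferring vertical derivatives to~$v$ does genuinely factor as a nonnegative power of~$t$ times a scale-$t$ Schwartz convolution applied to a component of $\nabla_\pureH\Tr_{m-1}^+\varphi$, and that no boundary term appears whose control would require the weaker (and unavailable) norm $\|\Tr_{m-1}^+\varphi\|_{L^{p'}}$ rather than $\|\nabla_\pureH\Tr_{m-1}^+\varphi\|_{L^{p'}}$.
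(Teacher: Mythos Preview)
Your overall strategy is the paper's: decompose $\nabla^m\mathcal{E}\varphi$ into Schwartz-type convolutions of components of $\nabla_\pureH\Tr_{m-1}^+\varphi$, integrate by parts in $t$ to accumulate vertical derivatives on~$v$, and close via Lemma~\ref{B:lem:control:lusin} together with the bound~\eqref{B:eqn:S:Schwartz:p}. The slice continuity and the handling of boundary terms at $t=\varepsilon,T$ are also essentially as in the paper (packaged there as Lemma~\ref{B:lem:Neumann:slices}).

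There is, however, a genuine gap in your reduction. Integration by parts in $t$ does \emph{not} produce only terms of the form $\int s^{2j}\omega(s)\langle\arr\psi_s,\nabla^m\partial_s^{2j}v\rangle\,ds$ with the full count of $2j$ vertical derivatives on~$v$. Each step generates a cross-term in which $\partial_t$ lands on the $\mathcal{Q}_t^m$ factor rather than on~$v$; after $2j$ steps the Leibniz expansion contains terms with as few as \emph{zero} vertical derivatives on~$v$, and for those Lemma~\ref{B:lem:control:lusin} (which requires $j\geq m$) is inapplicable. You cannot iterate the integration by parts on such a cross-term to force more derivatives onto~$v$: you simply regenerate the same type of term with one more $\partial_t$ on~$\mathcal{Q}_t^m$. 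The bound~\eqref{B:eqn:S:Schwartz:p} is of no help either, since it requires the convolution kernel to integrate to~$1$.

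The missing ingredient is a separate square-function estimate for \emph{mean-zero} Schwartz convolutions. The paper supplies this as Lemma~\ref{B:lem:semigroup:area}, specifically the bound~\eqref{B:eqn:Q:area}: whenever at least one $\partial_t$ (or one excess horizontal derivative) falls on $\mathcal{Q}_t^m$, the resulting kernel has vanishing integral, and then $\doublebar{\mathcal{A}_2(t^{\ell-j}\partial_\pureH^\gamma\partial_t^{\ell-|\gamma|}\mathcal{Q}_t^m\psi_j)}_{L^{p'}}\leq C\doublebar{\nabla_\pureH^j\psi_j}_{L^{p'}}$. Combined with formula~\eqref{B:eqn:area:2} and Caccioppoli on the $v$ side, this disposes of every cross-term directly, leaving only the single leading term with all $2k$ derivatives on~$v$, to which Lemma~\ref{B:lem:control:lusin} and~\eqref{B:eqn:S:Schwartz:p} then apply. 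The paper organizes this as Lemma~\ref{B:lem:Neumann:Dirichlet} (isolating the leading term) followed by the inductive integration by parts at the end of Section~\ref{B:sec:Neumann}. So your anticipated ``principal obstacle'' is not the factoring of $\arr\psi_s$---that part is fine---but the fact that the cross-terms require a mechanism different from Lemma~\ref{B:lem:control:lusin}.
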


\begin{thm}\label{B:thm:Neumann:2}
Suppose that $L$ is an elliptic operator associated with coefficients $\mat A$ that are $t$-independent in the sense of formula~\eqref{B:eqn:t-independent} and satisfy the ellipticity conditions \eqref{B:eqn:elliptic} and~\eqref{B:eqn:elliptic:bounded}.
Let $w\in \dot W^2_{m,loc}(\R^\dmn_+)$ and suppose that $Lw=0$ in $\R^\dmn_+$.

Suppose that ${\mathcal{A}_2(t\nabla^m\partial_t w)}\in {L^p(\R^n)}$  for some $1<p<\infty$.
Further assume that for any $\sigma>0$ we have that $\nabla^m \partial_\dmn w\in L^2(\R^n\times(\sigma,\infty))$. 
Finally, assume that
\begin{equation*}\sup_{\tau>0}\biggl(\int_{\R^n}  \biggl(\fint_{B((x,\tau),\tau/2)} \abs{\nabla^m w}^2\biggr)^{p/2}\biggr)^{1/p} = C_0<\infty.\end{equation*}

Then for all $\varphi$ smooth and compactly supported in $\R^\dmn$ we have that the bound
\begin{equation*}
\abs{\langle \M_{\mat A}^+ w,\Tr_{m-1}\varphi\rangle_{\R^n}}
\leq C \doublebar{\Tr_{m-1}^+\varphi}_{L^{p'}(\R^n)}(C_0+
\doublebar{\mathcal{A}_2(t\nabla^m \partial_t w)}_{L^p(\R^n)})
\end{equation*}
is valid. Furthermore, we have that
\begin{equation}
\label{B:eqn:w:absolute}
\int_0^\infty \int_{\R^n}\abs{\langle \mat A(x)\nabla^m w(x,t),\nabla^m\mathcal{E}\varphi(x,t)\rangle}\,dx\,dt<\infty \end{equation}
and 
\begin{equation}
\label{B:eqn:Neumann:regular}
\langle \M_{\mat A}^+ w,\Tr_{m-1}\varphi\rangle_{\R^n} = \langle \mat A\nabla^m w,\nabla^m\varphi\rangle_{\R^\dmn_+}
\end{equation}
for any smooth, compactly supported extension of $\Tr_{m-1}\varphi$; that is, the Neumann boundary values may be defined in terms of arbitrary $C^\infty_0$ extensions and not the distinguished extension~$\mathcal{E}\varphi$. 
\end{thm}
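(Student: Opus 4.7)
The argument establishes~\eqref{B:eqn:w:absolute}, \eqref{B:eqn:Neumann:regular}, and the main estimate in turn, following the template of Theorem~\ref{B:thm:Neumann:1} and using Lemma~\ref{B:lem:control:lusin} together with the Schwartz-kernel area integral bound~\eqref{B:eqn:S:Schwartz:p} from Theorem~\ref{B:thm:potentials}.

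For~\eqref{B:eqn:w:absolute}, the $C_0$ hypothesis combined with Lemma~\ref{B:lem:slices} and Theorem~\ref{B:thm:Meyers} yields the uniform slice bound $\doublebar{\nabla^m w(\,\cdot\,,t)}_{L^p(\R^n)}\leq C\,C_0$ for all $t>0$: local $L^2$ averages on balls of scale $t$ are comparable (via Meyers) to local $L^p$ averages, and summing over a grid at scale $t$ gives the uniform $L^p$ estimate. The decomposition $\mathcal{E}\varphi=\sum_{k=0}^{m-1}(k!)^{-1}t^k\mathcal{Q}_t^m\varphi_k$, together with the smoothing and decay properties of $\mathcal{Q}_t^m=e^{-(-t^2\Delta_\pureH)^m}$ and the compact support of each $\varphi_k=\partial_\dmn^k\varphi(\,\cdot\,,0)$, makes $\doublebar{\nabla^m\mathcal{E}\varphi(\,\cdot\,,t)}_{L^{p'}(\R^n)}$ uniformly bounded on bounded intervals of $t$ and rapidly decaying at infinity; H\"older's inequality then yields~\eqref{B:eqn:w:absolute}. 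For~\eqref{B:eqn:Neumann:regular}, any smooth compactly supported extension $\Phi$ of $\Tr_{m-1}\varphi$ has $\Phi-\mathcal{E}\varphi$ with vanishing Dirichlet trace; applying the weak equation $Lw=0$ on a strip $\R^n\times(\epsilon,T)$ (on which $w\in\dot W^2_m$ is available by hypothesis) to a localization of $\Phi-\mathcal{E}\varphi$, and then passing $\epsilon\to0^+$, $T\to\infty$ using~\eqref{B:eqn:w:absolute} to control the boundary terms, yields~\eqref{B:eqn:Neumann:regular}.

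For the main estimate, we integrate by parts in~$t$ sufficiently many times to shift derivatives onto~$w$. Expanding $\nabla^m\mathcal{E}\varphi$ as a finite sum of terms $t^\ell\mathcal{Q}_t^{\mathrm{new}}(\partial_\pureH^\beta\varphi_k)$, where each $\mathcal{Q}_t^{\mathrm{new}}$ is convolution with a Schwartz kernel at scale $t$ (obtained from derivatives of the $\mathcal{Q}_t^m$-kernel), we use the $t$-independence of $\mat A$ to produce integrals of the form $\int s^{2j}\omega(s)\langle\arr\psi_s,\nabla^m\partial_s^{2j}u\rangle\,ds$ suitable for Lemma~\ref{B:lem:control:lusin}, with $u=\partial_t w$ for odd-order $t$-derivatives on $w$ (so that $\mathcal{A}_2^+(\Omega(t)\,t\,\nabla^m u)$ matches our hypothesis directly) and $u=w$ after invoking $Lw=0$ to convert even-order derivatives into odd-order ones. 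In each case $\arr\psi_s$ is of Schwartz convolution type built from $\Tr_{m-1}\varphi$, so the factor $\mathcal{A}_2^-(\abs{t}^{j-2m+1}\partial_\dmn^{j-m}\s^{L^*}_\nabla\arr\psi_{\abs{t}r})$ produced by Lemma~\ref{B:lem:control:lusin} falls within the scope of~\eqref{B:eqn:S:Schwartz:p}, giving an $L^{p'}(\R^n)$ norm bound by $\doublebar{\Tr_{m-1}\varphi}_{L^{p'}(\R^n)}$, while the companion factor is bounded in $L^p(\R^n)$ by $\doublebar{\mathcal{A}_2(t\nabla^m\partial_t w)}_{L^p(\R^n)}$.

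The main obstacle is tracking the boundary terms generated by the repeated integration by parts. At $t\to\infty$ they vanish by the decay of $\nabla^m\mathcal{E}\varphi$ and the uniform slice bound on $\nabla^m w$. At $t\to 0^+$, however, boundary terms involving $\nabla^m w(\,\cdot\,,t)$ paired against lower-order derivatives of $\mathcal{E}\varphi$ do not vanish; they are controlled by the uniform slice estimate $\doublebar{\nabla^m w(\,\cdot\,,t)}_{L^p(\R^n)}\leq C\,C_0$ established earlier, contributing precisely the $C\,C_0\,\doublebar{\Tr_{m-1}\varphi}_{L^{p'}(\R^n)}$ term in the conclusion and thereby explaining the role of the $C_0$ hypothesis. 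The auxiliary hypothesis $\nabla^m\partial_\dmn w\in L^2(\R^n\times(\sigma,\infty))$ ensures that the Green's formula underlying Lemma~\ref{B:lem:control:lusin} applies on every horizontal slice.
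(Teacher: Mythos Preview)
Your overall strategy matches the paper's, but there is a genuine gap in how you handle $\nabla^m w$ (as opposed to $\nabla^m\partial_t w$) for large~$p$. You claim the uniform slice bound $\doublebar{\nabla^m w(\,\cdot\,,t)}_{L^p(\R^n)}\leq C\,C_0$, obtained by using Meyers' estimate (Theorem~\ref{B:thm:Meyers}) to pass from local $L^2$ averages to local $L^p$ averages. This works only for $p<p_0^+$; for $p\geq p_0^+$ Meyers does not apply, and $\nabla^m w(\,\cdot\,,t)$ need not lie in $L^p(\R^n)$ at all. Since the theorem is stated for all $1<p<\infty$, this breaks your proofs of~\eqref{B:eqn:w:absolute} and of the boundary terms at $t\to 0^+$. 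The paper never invokes an $L^p$ slice bound on $\nabla^m w$: instead, pairings with $\nabla^m w(\,\cdot\,,\tau)$ are controlled via the bound~\eqref{B:eqn:local:2:global:p}, which places the test function under a maximal function and the solution under a localized area integral, yielding Lemma~\ref{B:lem:Neumann:slices}. Applied with $u=w$, that lemma gives directly that $\int_{\R^n}\abs{\langle\mat A\nabla^m w(\,\cdot\,,\tau),\nabla^m\mathcal{E}\varphi(\,\cdot\,,\tau)\rangle}$ is bounded by $C\,C_0$ times $\min\bigl(\doublebar{\nabla_\pureH^{j+1}\psi_j}_{L^{p'}},\,\tau^{n/p'-n/r-1}\doublebar{\nabla_\pureH^j\psi_j}_{L^r}\bigr)$ for any $1\leq r\leq p'$, which is integrable in~$\tau$ and also controls all boundary terms from integration by parts.

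A related issue: you propose applying Lemma~\ref{B:lem:control:lusin} with $u=w$ in one branch. But that lemma requires $\nabla^m u\in L^2(\R^n\times(\sigma,\infty))$ (this is where the Green's formula is invoked), and the hypothesis gives this only for $u=\partial_t w$, not for $w$ itself; similarly your claim that $w\in\dot W^2_m$ on strips is not in the hypotheses. The paper's route is to integrate by parts \emph{once} in~$t$ (Lemma~\ref{B:lem:Neumann:regularity}), producing boundary terms controlled as above, so that every remaining bulk term involves $\partial_t w$. The one residual term $\langle\nabla^m_\pureH\partial_t\mathcal{Q}_t^m\psi_j,\mat A\nabla^m w\rangle$ still involving $w$ is handled not by Lemma~\ref{B:lem:control:lusin} but by using $Lw=0$ on the slice to rewrite the purely horizontal pairing as a sum of terms with $\partial_t^{m-\abs\gamma}w$, then bounding via~\eqref{B:eqn:Q:area} and Caccioppoli. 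Only after everything involves $\partial_t w$ does the paper apply Lemma~\ref{B:lem:control:lusin} (with $u=\partial_t w$) and the Schwartz-kernel bound~\eqref{B:eqn:S:Schwartz:p}.
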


Before proving these theorems, we make two remarks; these remarks may assist in applying Theorems~\ref{B:thm:Neumann:1} and~\ref{B:thm:Neumann:2}.

\begin{rmk} \label{B:rmk:uniform}
We comment on the appearance in Theorem~\ref{B:thm:Neumann:2} of the term 
\begin{equation*}\sup_{\tau>0}\biggl(\int_{\R^n} \biggl(\fint_{B((x,\tau),\tau/2)} \abs{\nabla^m w}^2\biggr)^{p/2}\biggr)^{1/p}.\end{equation*}

If $p<p^+_L$, where $p^+_L$ is as in Theorem~\ref{B:thm:Meyers}, then 
\begin{equation*}\sup_{\tau>0}\int_{\R^n} \biggl(\fint_{B(x,\tau),\tau/2} \abs{\nabla^m w}^2\biggr)^{p/2}\,dx\leq C\sup_{\tau>0}\doublebar{\nabla^m w(\,\cdot\,,\tau)}_{L^p(\R^n)}^p
\end{equation*}
and so if $p'$ is such that the condition~\eqref{B:eqn:S:lusin:variant} is valid, then by Theorem~\ref{B:thm:Dirichlet:2} we have that
\begin{equation*}\sup_{\tau>0}\int_{\R^n} \biggl(\fint_{B((x,\tau),\tau/2)} \abs{\nabla^m w}^2\biggr)^{p/2}
\leq C\doublebar{\mathcal{A}_2(t\nabla^m\partial_t w)}_{L^p(\R^n)}^p.\end{equation*}
provided $\doublebar{\nabla^m w(\,\cdot\,,\tau)}_{L^p(\R^n)}<\infty$ for at least one value of $\tau>0$.

As mentioned in the introduction, this term appears in other ways in the theory; for example, if $\widetilde N$ is the modified nontangential maximal function introduced in \cite{KenP93}, then
\begin{equation*}\sup_{\tau>0}\int_{\R^n} \biggl(\fint_{B((x,\tau),\tau/2)} \abs{\nabla^m w}^2\biggr)^{p/2}
\leq C\doublebar{\widetilde N(\nabla^m w)}_{L^p(\R^n)}^p.
\end{equation*}

\end{rmk}

\begin{rmk}\label{B:rmk:Neumann:p<2}
As in Section~\ref{B:sec:Dirichlet}, if $p\leq 2$, then finiteness of  $\doublebar{\mathcal{A}_2(t\nabla^m v)}_{L^p(\R^n)} $ or $\doublebar{\mathcal{A}_2(t\nabla^m \partial_t w)}_{L^p(\R^n)} $ implies that $\nabla^m v\in L^2(\R^n\times(\sigma,\infty))$ or $\nabla^m \partial_\dmn w\in L^2(\R^n\times(\sigma,\infty))$, respectively, for any $\sigma>0$. 

Thus, if $1<p\leq 2$, then $v$ satisfies the conditions of Theorem~\ref{B:thm:Neumann:1} provided only that $\mathcal{A}_2(t\nabla^m v)\in L^p(\R^n)$ and $Lv=0$ in $\R^\dmn_+$.

Similarly, by Remark~\ref{B:rmk:uniform}, if $1<p\leq 2$ then $w-P$ satisfies the conditions of Theorem~\ref{B:thm:Neumann:2} provided $\mathcal{A}_2(t\nabla^m \partial_t w)\in L^p(\R^n)$ and $Lw=0$ in $\R^\dmn_+$, where $P$ is as in Theorem~\ref{B:thm:Dirichlet:2}.
\end{rmk}

We will devote the remainder of this section to a proof of these two theorems. 

We begin with the following estimates on $\mathcal{Q}_t^m$.

\begin{lem}\label{B:lem:semigroup:area}
Let $0\leq j\leq m$ and let $\ell\geq j$ be an integer. Let $\gamma$ be a multiindex with $\gamma_\dmn=0$ and $\abs\gamma\leq \ell$. 

If $1\leq r\leq {p'}\leq \infty$, then
\begin{equation}\label{B:eqn:Q:Lp}
\doublebar{t^{\ell-j} \partial_\pureH^\gamma \partial_t^{\ell-\abs\gamma}\mathcal{Q}_t^m\psi}_{L^{p'}(\R^n)} 
\leq 
C_{p',r} t^{n/{p'}-n/r}\doublebar{\nabla_\pureH^j\psi}_{L^r(\R^n)}
\end{equation}
for any $t>0$ and $\psi\in \dot W^r_j(\R^n)$.

If $1<p'<\infty$, and if
$\ell>\abs\gamma$ or $\ell=\abs\gamma>j$, then 
then
\begin{equation}
\label{B:eqn:Q:area}
\doublebar{\mathcal{A}_2(t^{\ell-j} \partial_\pureH^\gamma \partial_t^{\ell-\abs\gamma}\mathcal{Q}_t^m\psi)}_{L^{p'}(\R^n)} \leq C \doublebar{\nabla_\pureH^j\psi}_{L^{p'}(\R^n)}\end{equation}
for any $\psi\in \dot W^{p'}_j(\R^n)$.

\end{lem}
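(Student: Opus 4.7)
My plan is to work on the Fourier side. Since $\mathcal{Q}_t^m$ has multiplier $g(t|\xi|)$ with $g(s)=e^{-s^{2m}}$ Schwartz, the operator $\mathcal{Q}_t^m$ is convolution against a Schwartz kernel $K_t(y)=t^{-n}K(y/t)$. A direct computation gives that $t^{\ell-j}\partial_\pureH^\gamma\partial_t^{\ell-|\gamma|}\mathcal{Q}_t^m$ has multiplier $t^{-j}N(t\xi)$, where
\[N(\eta)=(i\eta)^\gamma|\eta|^{\ell-|\gamma|}g^{(\ell-|\gamma|)}(|\eta|).\]
Because $g$ is even in $s$, the derivative $g^{(k)}(s)$ has parity $(-1)^k$, so the factor $|\eta|^{\ell-|\gamma|}g^{(\ell-|\gamma|)}(|\eta|)$ is an even function of $\eta$ that can be written as $\tilde Q(|\eta|^2)$ for a Schwartz function $\tilde Q$ on $[0,\infty)$. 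Thus $N$ is Schwartz, and vanishes at $\eta=0$ to order at least $\ell\geq j$.

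The heart of the proof is a Schwartz-preserving decomposition
\[N(\eta)=\sum_{|\alpha|=j}(i\eta)^\alpha\hat S^\alpha(\eta),\qquad \hat S^\alpha\text{ Schwartz},\]
which on the physical side yields
\[t^{\ell-j}\partial_\pureH^\gamma\partial_t^{\ell-|\gamma|}\mathcal{Q}_t^m\psi=\sum_{|\alpha|=j}S^\alpha_t*\partial_\pureH^\alpha\psi,\qquad S^\alpha_t(y)=t^{-n}S^\alpha(y/t).\]
When $|\gamma|\geq j$ I would simply split $\gamma=\alpha+\gamma'$ with $|\alpha|=j$ and take $\hat S^\alpha(\eta)=(i\eta)^{\gamma'}\tilde Q(|\eta|^2)$. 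When $|\gamma|<j$, the key observation is that the radial structure forces the vanishing order of $\tilde Q(|\eta|^2)$ at the origin to be even and at least $2m\lceil(\ell-|\gamma|)/(2m)\rceil\geq j-|\gamma|$; one can then factor $(|\eta|^2)^{\lceil(j-|\gamma|)/2\rceil}$ out of $\tilde Q$ and apply the multinomial identity $(|\eta|^2)^c=\sum_{|\beta|=c}\frac{c!}{\beta!}\eta^{2\beta}$, pairing these monomials with $(i\eta)^\gamma$ to extract exactly $j$ factors of $(i\eta)$ while absorbing at most one leftover $\eta_i$ (in the odd case) into the Schwartz remainder.

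With this representation, \eqref{B:eqn:Q:Lp} follows from Young's convolution inequality: the exponents $1/q=1+1/p'-1/r$ give $\|S^\alpha_t*\partial_\pureH^\alpha\psi\|_{L^{p'}}\leq\|S^\alpha\|_{L^q}\,t^{n/p'-n/r}\|\partial_\pureH^\alpha\psi\|_{L^r}$, and summing over $\alpha$ produces the claim. For \eqref{B:eqn:Q:area}, I need the additional cancellation $\hat S^\alpha(0)=0$, equivalently $\int S^\alpha=0$. In the case $|\gamma|<j$ with $j-|\gamma|$ odd this is automatic from the leftover $\eta_i$ factor; in the remaining cases it follows from the hypothesis ($\ell>|\gamma|$ or $\ell=|\gamma|>j$) together with the identity $g^{(k)}(0)=0$ unless $k$ is a nonnegative multiple of $2m$, using $j-|\gamma|\leq j\leq m<2m$ to conclude that the residual factor $\tilde Q_3(|\eta|^2)$ vanishes at $\eta=0$. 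Given $\hat S^\alpha(0)=0$, the standard continuous Littlewood--Paley estimate for area integrals of a Schwartz mean-zero convolution kernel yields $\|\mathcal{A}_2(S^\alpha_t*\partial_\pureH^\alpha\psi)\|_{L^{p'}}\leq C\|\partial_\pureH^\alpha\psi\|_{L^{p'}}$ for $1<p'<\infty$, and summing over $\alpha$ completes the proof. The principal technical obstacle is the Schwartz-preserving decomposition of $N$: a naive formula such as $\hat S^\alpha(\eta)=\frac{j!}{\alpha!}N(\eta)\eta^\alpha/|\eta|^{2j}$ fails to be smooth at the origin, and it is precisely the radial structure of $\tilde Q(|\eta|^2)$ that makes a decomposition with Schwartz coefficients possible.
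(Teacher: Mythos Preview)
Your proof is correct and arrives at the same key representation as the paper does, namely
\[
t^{\ell-j}\partial_\pureH^\gamma\partial_t^{\ell-|\gamma|}\mathcal{Q}_t^m\psi
=\sum_{|\alpha|=j} S^\alpha_t*\partial_\pureH^\alpha\psi,
\qquad S^\alpha\text{ Schwartz},
\]
from which both \eqref{B:eqn:Q:Lp} (via Young) and \eqref{B:eqn:Q:area} (via Littlewood--Paley, once $\int S^\alpha=0$) follow. The route, however, is genuinely different. You work on the Fourier side: you compute the multiplier $t^{-j}N(t\xi)$ explicitly, observe that the radial factor $|\eta|^{\ell-|\gamma|}g^{(\ell-|\gamma|)}(|\eta|)$ is a Schwartz function of $|\eta|^2$ with vanishing order at least $2m$ at the origin whenever $\ell>|\gamma|$, and then algebraically factor out $j$ powers of $\eta$ using the multinomial expansion of $|\eta|^{2c}$. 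The paper instead exploits the operator identity $\partial_t\mathcal{Q}_t^m=-2m\,t^{2m-1}(-\Delta_\pureH)^m\mathcal{Q}_t^m$ to convert $t$-derivatives directly into horizontal derivatives (each $\partial_t$ producing $2m$ of them), which immediately supplies at least $j$ horizontal derivatives to place on~$\psi$. For the mean-zero condition, the paper verifies $\int S^\alpha=0$ by testing the operator on the monomials $x^\delta$, $|\delta|=j$, and showing it annihilates them; this uses only the elementary fact that $\partial_t^k\mathcal{Q}_t^m\big|_{t=0}=0$ for $1\le k\le 2m-1$. Your verification is equivalent but phrased as a count of vanishing orders on the multiplier side. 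The operator-identity approach is slightly cleaner in that it avoids any discussion of the smoothness of $\tilde Q(|\eta|^2)$ near $\eta=0$ or the parity bookkeeping you need for the decomposition; on the other hand, your Fourier computation makes the structure of the kernel completely explicit and would adapt more readily to semigroups other than $e^{-(-t^2\Delta)^m}$.
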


\begin{proof}
For any Schwartz function $\eta$, let $\eta_t(y)=t^{-n} \eta(y/t)$.
Recall that $\mathcal{Q}_t^m=e^{-(-t^2\Delta_\pureH)^m}$; a straightforward argument using the Fourier transform establishes that $\mathcal{Q}_t^m f(x)=\theta_t*f(x)$ for some Schwartz function~$\theta$. 

Observe that $\partial_t\mathcal{Q}_t^m= -2m\,t^{2m-1}(-\Delta_\pureH)^{m}\mathcal{Q}_t^m$. Thus, there are some constants $C_{\ell,m,\gamma,\zeta}$ such that
\begin{align*}
t^{\ell-j} \partial_\pureH^\gamma \partial_t^{\ell-\abs\gamma}\mathcal{Q}_t^m\psi(x)
&= 
\sum_{\mathllap{2m}\leq \abs\zeta\leq \mathrlap{2m(\ell-\abs\gamma)}} t^{\abs\zeta+\abs\gamma-j} 
C_{\ell,m,\gamma,\zeta}\partial_\pureH^{\zeta+\gamma}\mathcal{Q}_t^m\psi(x)
&&\text{if $\ell>\abs\gamma$}
,\\
t^{\ell-j} \partial_\pureH^\gamma \partial_t^{\ell-\abs\gamma}\mathcal{Q}_t^m\psi(x)
&= 
t^{\abs\gamma-j} \partial_\pureH^\gamma \mathcal{Q}_t^m\psi(x)
&&\text{if $\ell=\abs\gamma$}
.\end{align*}
Notice that the purely horizontal derivatives may be chosen to fall on either $\psi$ or the convolution kernel of~$\mathcal{Q}_t^m$, and, furthermore, if either $\ell>\abs\gamma$ or $\ell=\abs\gamma\geq j$ then there are at least $j$ such derivatives. Thus, we have that
\begin{equation}\label{B:eqn:Q:gradient}
t^{\ell-j}\partial_\pureH^\gamma \partial_t^{\ell-\abs\gamma} \mathcal{Q}^{m}_t \psi(x)
=\sum_{\abs\delta=j}\eta^\delta_t*\partial^\delta\psi(x)
=\arr \eta_t*\nabla_\pureH^j\psi(x)
\end{equation}
for some array of Schwartz functions $\arr \eta$ depending on $\gamma$, $\ell$, $m$ and~$j$. 

Observe that $\doublebar{\eta_t}_{L^s(\R^n)} = C_s t^{n/s-n}$ for some constant~$C_s$. It is well known that, if $1\leq r\leq {p'}\leq\infty$, then 
\begin{equation*}\doublebar{\arr\eta_t*\nabla_\pureH^j\psi}_{L^{p'}(\R^n)}\leq \doublebar{\arr\eta_t}_{L^s(\R^n)}\doublebar{\nabla_\pureH^j\psi}_{L^r(\R^n)},\end{equation*}
where $1/{p'}+1=1/s+1/r$.
Combining these estimates yields the bound \eqref{B:eqn:Q:Lp}.

Let $\rho$ be a Schwartz function that satisfies $\int \rho=0$.
It is a straightforward consequence of vector-valued $Tb$ theory (see, for example, \cite[Chapter~I, Section~6.4]{Ste93}) that \begin{equation*}\doublebar{\mathcal{A}_2(\rho_t*f) }_{L^{p'}(\R^n)} \leq C(p') \doublebar{f}_{L^{p'}(\R^n)}\end{equation*}
for any $1<p'<\infty$.
Thus, to establish the bound~\eqref{B:eqn:Q:area}, it suffices to show that $\arr \eta$ integrates to zero.
To show that $\arr \eta$ integrates to zero, it suffices to show that, if $p_\delta(x)=x^\delta$ for some $\abs\delta=j$, so that $\nabla^j_\pureH p_\delta =\delta!\,\arr e_\delta$, then 
\begin{equation*}t^{\ell-j}\partial_\pureH^\gamma \partial_t^{\ell-\abs\gamma} \mathcal{Q}^{m}_t p_\delta(x)=0.\end{equation*}

But 
\begin{align*}
\mathcal{Q}^{m}_t p_\delta(x)=\theta_t*p_\delta(x)
&=\int (x-ty)^\delta \theta(y)\,dy = 
\sum_{\zeta\leq \delta} \frac{\delta!}{\zeta!(\delta-\zeta)!} \, x^\zeta \, t^{\abs{\delta-\zeta}} \int y^{\delta-\zeta}\,\theta(y)\,dy
\\&=
\sum_{\zeta\leq \delta} C_{\zeta,\delta}\, x^\zeta \,t^{j-\abs{\zeta}} \end{align*}
where the sum is over multiindices $\zeta$ with $\zeta_j\leq \delta_j$ for all $1\leq j\leq \dmn$.

Let $C_{\zeta,\delta}=0$ if $\abs\zeta\leq j$ but $\zeta\not\leq \delta$.

Recall that if $1\leq k\leq 2m-1$, then $\partial_t^k \mathcal{Q}^{m}_t \big\vert_{t=0}=0$. Thus,
\begin{align*}
0=\partial_t^k\mathcal{Q}^{m}_t p_\delta(x)\big\vert_{t=0}
&=
{k!}\sum_{\abs\zeta = j-k} C_{\zeta,\delta}\, x^\zeta 
\end{align*}
for any 
$1\leq k\leq j$.

Now, for more general~$t$, we compute
\begin{align*}
\partial_t^{\ell-\abs\gamma}\mathcal{Q}^{m}_t p_\delta(x)
&=
\sum_{k=\ell-\abs\gamma}^j\partial_t^{\ell-\abs\gamma}t^{k} 
\sum_{\abs\zeta=j-k} C_{\zeta,\delta}\, x^\zeta 
=
\partial_t^{\ell-\abs\gamma}t^{0} \, 
 C_{\delta,\delta}\, x^\delta 
.\end{align*}
This is zero whenever $\ell>\abs\gamma$. If $\ell=\abs\gamma$, then
\begin{align*}
\partial_\pureH^\gamma\partial_t^{\ell-\abs\gamma}\mathcal{Q}^{m}_t p_\delta(x)
&=
 C_{\delta,\delta}\, \partial_\pureH^\gamma x^\delta 
\end{align*}
which is zero if $\abs\gamma>\abs\delta=j$.
\end{proof}

Next, we prove the following lemma; this is the usage of the bound~\eqref{B:eqn:local:2:global:p} most applicable to the present case.
\begin{lem}\label{B:lem:Neumann:slices}
Let $L$ be as in Theorem~\ref{B:thm:Neumann:1}. Suppose that $Lu=0$ in $\R^\dmn_+$. If $\psi$ is smooth and compactly supported, and if $0\leq j\leq m$, $\ell\geq j$ and $k\geq 0$ are integers, then
\begin{multline*}
\int_{\R^n}
\abs{
	\tau^{\ell-j+k+1}\nabla^\ell \mathcal{Q}^{m}_\tau  \psi(x)
	\,\overline{A_{\gamma\beta}(x)\, \nabla^m\partial_\tau^k u(x,\tau)}\,}\,dx
\\\leq
	C \doublebar{\nabla_\pureH^j \psi}_{L^{p'}(\R^n)}	
	\doublebar{\mathcal{A}_2( \1_{\tau/2<t<3\tau/2} t\nabla^m u)}_{L^p(\R^n)}
.\end{multline*}
Furthermore, if $1\leq r \leq p'$, then
\begin{multline*}
\int_{\R^n}
\abs{
	\tau^{\ell-j+k+1}\nabla^{\ell-j}\nabla_{\pureH}^j \mathcal{Q}^{m}_\tau  \psi(x)
	\,\overline{A_{\gamma\beta}(x)\, \nabla^m\partial_\tau^k u(x,\tau)}\,}\,dx
\\\leq
	C \tau^{n/p'-n/r}\doublebar{\nabla_\pureH^j\psi}_{L^r(\R^n)}	
	\doublebar{\mathcal{A}_2( \1_{\tau/2<t<3\tau/2} t\nabla^m u)}_{L^p(\R^n)}
.\end{multline*}
\end{lem}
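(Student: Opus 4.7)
The strategy is to apply \eqref{B:eqn:local:2:global:p} to $v=u$, with the order of $\tau$-differentiation in \eqref{B:eqn:local:2:global:p} set to~$k$, and with test array built componentwise as
\begin{equation*}
\arr\varphi^\xi(x) = \tau^{\ell-j+k+1}\, A_{\gamma\beta}(x)\, \partial^\xi \mathcal{Q}^m_\tau\psi(x)\, \arr e_\beta \quad\text{for each $\abs\xi=\ell$,}
\end{equation*}
so that $\sum_\xi \abs{\langle\arr\varphi^\xi,\nabla^m\partial_\tau^k u(\,\cdot\,,\tau)\rangle}$ dominates the integrand on the left of the desired inequality. The $\tau^k$ prefactor on the left of \eqref{B:eqn:local:2:global:p} absorbs part of the $\tau$-powers in $\arr\varphi^\xi$, the bound $\doublebar{\mat A}_{L^\infty}\leq\Lambda$ disposes of $A_{\gamma\beta}$, and H\"older in the conjugate exponents $(p',p)$ reduces everything to the deterministic estimate
\begin{equation*}
\tau^{\ell-j}\doublebar[bigg]{\biggl(\fint_{\Delta(\,\cdot\,,\tau/2)}\abs{\nabla^\ell\mathcal{Q}^m_\tau\psi}^2\biggr)^{1/2}}_{L^{p'}(\R^n)}\leq C\doublebar{\nabla_\pureH^j\psi}_{L^{p'}(\R^n)},
\end{equation*}
together with the corresponding $L^r$-version (carrying the factor $\tau^{n/p'-n/r}$) needed for the second assertion.

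To handle this, I would decompose $\nabla^\ell$ into purely horizontal and vertical components and invoke the convolution representation
\begin{equation*}
\tau^{\ell-j}\partial_\pureH^\gamma\partial_t^{\ell-\abs\gamma}\mathcal{Q}^m_\tau\psi = \arr\eta^\gamma_\tau * \nabla_\pureH^j\psi
\end{equation*}
already extracted in the proof of Lemma~\ref{B:lem:semigroup:area} (cf.\ formula~\eqref{B:eqn:Q:gradient}) for certain Schwartz arrays~$\arr\eta^\gamma$. Minkowski's inequality for integrals then dominates the local $L^2$ average of this convolution pointwise by $\tilde\eta^\gamma_\tau * \abs{\nabla_\pureH^j\psi}$, with kernel $\tilde\eta^\gamma_\tau(u)=\bigl(\fint_{\abs y<\tau/2}\abs{\arr\eta^\gamma_\tau(u+y)}^2\,dy\bigr)^{1/2}$. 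A change of variables identifies $\tilde\eta^\gamma_\tau(u)=\tau^{-n}\tilde\eta^\gamma_1(u/\tau)$ for a fixed $\tilde\eta^\gamma_1$ that lies in every $L^s(\R^n)$, since $\arr\eta^\gamma$ is Schwartz.

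Young's inequality then closes both assertions. For the first, take $s=1$, so that $\doublebar{\tilde\eta^\gamma_\tau}_{L^1}$ is independent of $\tau$. For the second, I first observe that every entry of $\nabla^{\ell-j}\nabla_\pureH^j\mathcal{Q}^m_\tau\psi$ automatically involves at least $j$ horizontal derivatives, so the same convolution representation still applies; then I take $1/s=1+1/p'-1/r$, and the rescaling yields $\doublebar{\tilde\eta^\gamma_\tau}_{L^s}=C\tau^{n/p'-n/r}$, which is precisely the factor appearing in the statement. I do not anticipate a genuine obstacle here; the chief subtlety is bookkeeping the $\tau$-powers, and the Young/convolution route is essential because it remains valid throughout the full range $1<p'<\infty$, sidestepping the failure of the vector-valued Hardy-Littlewood maximal function on $L^{p'}$ when $p'<2$.
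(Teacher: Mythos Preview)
Your argument is correct; the Minkowski-plus-Young route does exactly what you claim, and the bookkeeping of $\tau$-powers is right. The paper's proof takes a slightly different path: rather than pass through the averaged kernel $\tilde\eta^\gamma_\tau$, it bounds the local $L^2$ average by the supremum and observes that, because of the convolution representation~\eqref{B:eqn:Q:gradient}, $\sup_{\abs{x-y}<\tau/2}\tau^{\ell-j}\abs{\nabla^\ell\mathcal{Q}^m_\tau\psi(y)}\leq C\mathcal{M}(\nabla_\pureH^j\psi)(x)$ pointwise, where $\mathcal{M}$ is the \emph{scalar} Hardy--Littlewood maximal operator. Boundedness of $\mathcal{M}$ on $L^{p'}$ for all $1<p'<\infty$ then closes the first estimate. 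For the second estimate the paper exploits the semigroup identity $\mathcal{Q}^m_\tau=\mathcal{Q}^m_{\tau/2}\mathcal{Q}^m_{\tau/2}$: one factor absorbs $\nabla^{\ell-j}$ and is handled by the maximal function as before, while $\nabla_\pureH^j\mathcal{Q}^m_{\tau/2}\psi$ is estimated in $L^{p'}$ via~\eqref{B:eqn:Q:Lp}, which supplies the $\tau^{n/p'-n/r}$ factor.

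Your closing remark about needing Young to ``sidestep the failure of the vector-valued Hardy--Littlewood maximal function on $L^{p'}$ when $p'<2$'' is off target: the paper never invokes a vector-valued or $L^2$-based maximal operator on $\arr\varphi$ itself; it applies the scalar operator to $\nabla_\pureH^j\psi$, which is fine for every $1<p'<\infty$. So both approaches work across the full range. Your version is arguably a touch more elementary (it avoids the maximal theorem entirely) and handles both assertions by the same mechanism, while the paper's semigroup trick for the second part is a useful device to remember in its own right.
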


This lemma has obvious applications if $u=v$ or $u=\partial_t w$.
We remark that it may also be applied with $u=w$, because
\begin{equation*}\sup_{\tau>0}\frac{1}{\tau}\doublebar{\mathcal{A}_2(t\1_{\tau/2<t<3\tau/2} \nabla^m w)}_{L^p(\R^n)}^p
\leq C\sup_{\tau>0}\int_{\R^n} \biggl(\fint_{B((x,\tau),\tau/2)} \abs{\nabla^m w}^2\biggr)^{p/2}.\end{equation*}

\begin{proof}[Proof of Lemma~\ref{B:lem:Neumann:slices}]
By formula~\eqref{B:eqn:Q:gradient},
\begin{equation*}\sup_{\abs{x-y}<\tau/2} \abs{\tau^{\ell-j}\nabla^\ell \mathcal{Q}_\tau^m\psi(y)} \leq C\mathcal{M}(\nabla_\pureH^j\psi)(x)\end{equation*}
for any $z\in Q$, where $\mathcal{M}$ denotes the Hardy-Littlewood maximal function. Recall that $\mathcal{M}$ is bounded on $L^{p'}(\R^n)$ for any $1<p<\infty$. 
The first result then follows from the bound~\eqref{B:eqn:local:2:global:p}.

Because $\mathcal{Q}_t^m$ is a semigroup and commutes with horizontal derivatives, we have that if $z\in Q$ then
\begin{equation*}\sup_{x\in Q} \abs{\tau^{\ell-j}\nabla^{\ell-j}\nabla_\pureH^j \mathcal{Q}_\tau^m\psi_j(x)} \leq 
C\mathcal{M}(\nabla_\pureH^j\mathcal{Q}_{\tau/2}^m\psi_j)(z)
\end{equation*}
and again by boundedness of $\mathcal{M}$ and the bound~\eqref{B:eqn:local:2:global:p} we have that
\begin{multline*}
\int_{\R^n}
\abs{
	\tau^{\ell-j+k+1}\nabla^{\ell-j}\nabla_\pureH^j  \mathcal{Q}^{m}_\tau  \psi(x)
	\,\overline{A_{\gamma\beta}(x)\, \nabla^m\partial_\tau^k u(x,\tau)}\,}\,dx
\\\leq
	C \doublebar{\nabla_\pureH^j\mathcal{Q}^m_{\tau/2}\psi}_{L^{p'}(\R^n)}	
	\doublebar{\mathcal{A}_2(t \1_{\tau/2<t<3\tau/2} \nabla^m u)}_{L^p(\R^n)}
.\end{multline*}

Now, by the bound~\eqref{B:eqn:Q:Lp}, we have that if $1<r<p'$ then $\doublebar{\nabla_\pureH^j\mathcal{Q}_{\tau/2}^m\psi}_{L^{p'}(\R^n)} 
\leq 
C_{p',r} \tau^{n/{p'}-n/r}\doublebar{\nabla_\pureH^j\psi}_{L^r(\R^n)}
$.
This completes the proof of the second estimate.
\end{proof}

We now prove Theorems~\ref{B:thm:Neumann:1} and~\ref{B:thm:Neumann:2}. We begin with the terms that require different arguments in the two cases; we will conclude this section by bounding a term that arises in both cases.

\begin{lem}
\label{B:lem:Neumann:Dirichlet}
Let $v$ be as in Theorem~\ref{B:thm:Neumann:1}.
Then 
\begin{equation*}{\langle \mat A\nabla^m v(\,\cdot\,,t),\nabla^m \mathcal{E}\varphi(\,\cdot\,,t)\rangle_{\R^n}}\end{equation*}
represents an absolutely convergent integral and is continuous in~$t$.

Furthermore, let $\psi_j(x)=\varphi_{m-j}(x)=\partial_\dmn^{m-j}\varphi(x,0)$, so \begin{equation*}\frac{1}{C}\doublebar{\nabla_\pureH \Tr_{m-1}^+\varphi}_{L^{p'}(\R^n)}\leq
\sum_{j=1}^m\doublebar{\nabla_\pureH^j\psi_j}_{L^{p'}(\R^n)}\leq C \doublebar{\nabla_\pureH \Tr_{m-1}^+\varphi}_{L^{p'}(\R^n)}.\end{equation*}
Suppose that $\doublebar{\mathcal{A}_2(t\,\nabla^m v)}_{L^p(\R^n)}<\infty$ for some $1<p<\infty$.
Then 
\begin{multline*}\langle \mat A\nabla^m v(\,\cdot\,,t), \nabla^m \mathcal{E}\varphi(\,\cdot\,,t)\rangle_{\R^n} 
\\=
	OK(t,\varphi,v)+
\sum_{j=1}^{m}\sum_{\abs\beta=m}
	\sum_{\substack{\abs\gamma=j\\\gamma_\dmn=0}}
	\frac{1}{(m-j)!}
	\int_{\R^n}
	\partial_\pureH^\gamma \mathcal{Q}^{m}_t \psi_{j}(x)\,\overline{A_{\gamma\beta}(x)\,\partial^\beta v(x,t)}\,dx
\end{multline*}
where $A_{\gamma\beta}=A_{\tilde \gamma \beta}$ for $\tilde \gamma=\gamma+(m-\abs\gamma)\vec e_\dmn$,
for some term $OK(t,\varphi,v)$ that satisfies the bound
\begin{equation*}
\int_0^\infty \abs{OK(t,\varphi,v)}\,dt\leq
C \doublebar{\nabla_\pureH \Tr_{m-1}^+\varphi}_{L^{p'}(\R^n)} \doublebar{\mathcal{A}_2(t\,\nabla^m v)}_{L^p(\R^n)}.\end{equation*}
\end{lem}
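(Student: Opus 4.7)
My plan is to expand $\nabla^m \mathcal{E}\varphi(\,\cdot\,,t)$ explicitly with the Leibniz rule and then isolate the main contribution one $j$ at a time. Writing $\psi_j = \varphi_{m-j}$ we have $\mathcal{E}\varphi(x,t) = \sum_{j=1}^m \frac{1}{(m-j)!}\, t^{m-j}\mathcal{Q}_t^m\psi_j(x)$. For any multiindex $\alpha$ with $\abs\alpha=m$ decompose $\alpha = \gamma + k\vec e_\dmn$ with $\gamma$ purely horizontal and $\abs\gamma = m-k$; the Leibniz rule gives
\begin{equation*}
\partial^\alpha\bigl[t^{m-j}\mathcal{Q}_t^m\psi_j(x)\bigr]
=\sum_{i=0}^{\min(k,m-j)}\binom{k}{i}\frac{(m-j)!}{(m-j-i)!}\, t^{m-j-i}\,\partial_\pureH^\gamma\partial_t^{k-i}\mathcal{Q}_t^m\psi_j(x).
\end{equation*}
The only summand in which neither a residual power of $t$ nor a residual $t$-derivative on $\mathcal{Q}_t^m$ remains is $k=i=m-j$, producing $(m-j)!\,\partial_\pureH^\gamma\mathcal{Q}_t^m\psi_j$ with $\abs\gamma = j$; after cancelling the prefactor $1/(m-j)!$ this is precisely the main term in the statement. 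Every other summand has $r:=m-j-i$ and $s:=k-i$ with $r+s\geq 1$, and they are collected into $OK(t,\varphi,v)$.

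For each such $OK$ summand I use identity~\eqref{B:eqn:Q:gradient}: taking $\ell = m-i$ we have $\ell - j = r$ and $\ell - \abs\gamma = s$, so
\begin{equation*}
t^r\,\partial_\pureH^\gamma\partial_t^s\mathcal{Q}_t^m\psi_j(x)
=\arr\eta_t * \nabla_\pureH^j\psi_j(x)
\end{equation*}
for some array $\arr\eta$ of Schwartz functions. Because $r+s\geq 1$, the hypothesis ``$\ell>\abs\gamma$ or $\ell=\abs\gamma>j$'' of Lemma~\ref{B:lem:semigroup:area} is met: $\ell>\abs\gamma$ when $s\geq 1$, and $\ell=\abs\gamma = m-i > j$ when $s=0$, $r\geq 1$. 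Consequently $\arr\eta$ has mean zero and the area integral estimate~\eqref{B:eqn:Q:area} yields
\begin{equation*}
\doublebar{\mathcal{A}_2(t^r\partial_\pureH^\gamma \partial_t^s \mathcal{Q}_t^m\psi_j)}_{L^{p'}(\R^n)}
\leq C\doublebar{\nabla_\pureH^j \psi_j}_{L^{p'}(\R^n)}.
\end{equation*}

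To bound $\int_0^\infty |OK(t,\varphi,v)|\,dt$, I apply the standard Cauchy--Schwarz area integral pairing: writing $1 = c_n^{-1}t^{-n}\int_{\R^n}\1_{\abs{x-y}<t}\,dx$ and applying Cauchy--Schwarz in the inner cone integral and Hölder in $x$ yields, for any measurable $H_1,H_2$,
\begin{equation*}
\int_0^\infty\int_{\R^n}\abs{H_1(y,t)\,H_2(y,t)}\,dy\,dt
\leq C\doublebar{\mathcal{A}_2 H_1}_{L^{p'}(\R^n)}\doublebar{\mathcal{A}_2(tH_2)}_{L^p(\R^n)}.
\end{equation*}
With $H_1 = t^r\partial_\pureH^\gamma\partial_t^s\mathcal{Q}_t^m\psi_j$ and $H_2 = A_{\alpha\beta}\,\partial^\beta v$, the previous paragraph and the $L^\infty$ bound on $\mat A$ give each $OK$ summand a bound by $C\doublebar{\nabla_\pureH^j\psi_j}_{L^{p'}}\doublebar{\mathcal{A}_2(t\nabla^m v)}_{L^p}$. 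Summing the finite collection of summands over $\alpha$ and $j$ and invoking the equivalence $\sum_j \doublebar{\nabla_\pureH^j\psi_j}_{L^{p'}} \approx \doublebar{\nabla_\pureH\Tr_{m-1}^+\varphi}_{L^{p'}}$ yields the claimed $L^1$-in-$t$ bound.

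Finally, absolute convergence of the inner product at fixed $t>0$ follows by applying Lemma~\ref{B:lem:Neumann:slices} termwise: each summand is controlled by $C_t\doublebar{\nabla_\pureH^j\psi_j}_{L^{p'}}\doublebar{\mathcal{A}_2(\1_{t/2<\tau<3t/2}\tau\nabla^m v)}_{L^p}$, finite by assumption. Continuity in $t$ follows from the smoothness of $t\mapsto \mathcal{Q}_t^m\psi_j$ (convolution with a Schwartz kernel depending smoothly on $t$), the local-in-$t$ $L^2$ continuity of $\nabla^m v(\,\cdot\,,t)$ supplied by Lemma~\ref{B:lem:slices} using the hypothesis $\nabla^m v\in L^2(\R^n\times(\sigma,\infty))$, and the dominated convergence theorem with the pointwise bound just derived. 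I expect the main obstacle to be the combinatorial bookkeeping of the Leibniz expansion together with verifying, case by case, that every non-main summand satisfies $r+s\geq 1$ so that the mean-zero hypothesis of Lemma~\ref{B:lem:semigroup:area} is available; once that is in place, the area integral pairing makes the estimate routine.
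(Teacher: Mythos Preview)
Your proposal is correct and follows essentially the same route as the paper: both expand $\nabla^m\mathcal{E}\varphi$ via the Leibniz rule (your index $i$ corresponds to the paper's $\ell=m-i$), isolate the same main term $\ell=\abs\gamma=j$, and control every other summand by pairing the area-integral estimate~\eqref{B:eqn:Q:area} against $\mathcal{A}_2(t\nabla^m v)$ through the identity you call the ``Cauchy--Schwarz area integral pairing'' (the paper's formula~\eqref{B:eqn:area:2}). The only stylistic difference is in the continuity step: the paper bounds the $t$-derivative of the integrand directly via Lemma~\ref{B:lem:Neumann:slices}, whereas you argue via $L^2$-continuity of the two factors; both are valid, though your reference to the dominated convergence theorem is not really needed once you have $L^2$-continuity of $\nabla^m v(\,\cdot\,,t)$ and $L^2$-continuity of $\nabla^m\mathcal{E}\varphi(\,\cdot\,,t)$.
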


\begin{proof}
Observe that by the definition \eqref{B:eqn:Neumann:extension} of~$\mathcal{E}$,
\begin{multline*}
\langle \mat A\nabla^m v(\,\cdot\,,t), \nabla^m \mathcal{E}\varphi(\,\cdot\,,t)\rangle_{\R^n} 
= 
	\sum_{j=1}^{m}\sum_{\abs\beta=m}
	\sum_{\substack{\abs\gamma\leq m\\\gamma_\dmn=0}}
	\frac{1}{(m-j)!}
	\\\times
	\int_{\R^n}\partial_\pureH^\gamma \partial_t^{m-\abs\gamma} (t^{m-j}  \mathcal{Q}^{m}_t  \psi_j(x))\,\overline{A_{\gamma\beta}(x)\,\partial^\beta v(x,t)}\,dx
.\end{multline*}
By the product rule,
\begin{multline*}
\langle \mat A\nabla^m v(\,\cdot\,,t), \nabla^m \mathcal{E}\varphi(\,\cdot\,,t)\rangle_{\R^n} 
= 
	\sum_{j=1}^{m}\sum_{\abs\beta=m}
	\sum_{\substack{\abs\gamma\leq m\\\gamma_\dmn=0}}
	\sum_{\ell=\max(j,\abs\gamma)}^m
	\frac{(m-\abs\gamma)!}{(\ell-\abs\gamma)!(m-\ell)!^2} 
	\\\times
	\int_{\R^n}
	t^{\ell-j} \partial_t^{\ell-\abs\gamma}\partial_\pureH^\gamma \mathcal{Q}^{m}_t  \psi_j(x)\,\overline{A_{\gamma\beta}(x)\,\partial^\beta v(x,t)}\,dx
.\end{multline*}
By Lemma~\ref{B:lem:Neumann:slices}, the integral is absolutely convergent and has absolute value at most 
\begin{equation*}Ct^{-1} \doublebar{\nabla_\pureH^j \psi_j}_{L^{p'}(\R^n)}	
	\doublebar{\mathcal{A}_2(t  \nabla^m u)}_{L^p(\R^n)}.\end{equation*}
Furthermore,
\begin{multline*}\int_{\R^n}\abs[bigg]{\frac{d}{dt}\biggl(
	t^{\ell-j} \partial_t^{\ell-\abs\gamma}\partial_\pureH^\gamma \mathcal{Q}^{m}_t  \psi_j(x)\,\overline{A_{\gamma\beta}(x)\,\partial^\beta v(x,t)}\biggr)}\,dx
\\\leq 
Ct^{-2} \doublebar{\nabla_\pureH^j \psi_j}_{L^{p'}(\R^n)}	
	\doublebar{\mathcal{A}_2(t  \nabla^m u)}_{L^p(\R^n)}
\end{multline*}
and so the integral over $\R^n$ is continuous (and in fact differentiable) in~$t$.

By formula~\eqref{B:eqn:area:0}, H\"older's inequality and the definition~\eqref{B:eqn:lusin} of $\mathcal{A}_2^a$, if $a>0$ and if $F$ and $G$ are nonnegative functions then
\begin{equation}
\label{B:eqn:area:2}
\int_{\R^n}\int_0^\infty F(x,t)\,G(x,t)\,dt\,dx 
= 
\frac{C_n}{a^n}
\int_{\R^n}
\mathcal{A}_2^a(F)(x)
\,
\mathcal{A}_2^a(t \,G)(x)
\,dx.\end{equation}
Thus,
\begin{multline*}
\int_{\R^n}\int_0^\infty 
	\abs[big]{t^{\ell-j}\,
	\partial_\pureH^\gamma \partial_t^{\ell-\abs\gamma} \mathcal{Q}^{m}_t \psi_{j}(x)\,\overline{A_{\gamma\beta}(x)\,\partial^\beta v(x,t)}}\,dt\,dx
\\\begin{aligned}
&\leq
	C
	\int_{\R^n} 
	\mathcal{A}_2(t^{\ell-j} \partial_\pureH^\gamma \partial_t^{\ell-\abs\gamma} \mathcal{Q}^{m}_t \psi_{j})(y)
	\,
	\mathcal{A}_2(t\,\partial^\beta v)(y)
	\,dy
.\end{aligned}\end{multline*}
By the bound~\eqref{B:eqn:Q:area}, if $\ell>\abs\gamma$ or $\ell=\abs\gamma>j$, then \begin{equation*}\doublebar{\mathcal{A}_2(t^{\ell-j} \partial_\pureH^\gamma \partial_t^{\ell-\abs\gamma} \mathcal{Q}^{m}_t \psi_{j})}_{L^{p'}(\R^n)} \leq \doublebar{\nabla_\pureH^j \psi_j}_{L^{p'}(\R^n)}.\end{equation*}
Thus, we need only consider the $\abs\gamma=j=\ell$ term; in other words,
\begin{multline*}
\langle \mat A\nabla^m v(\,\cdot\,,t), \nabla^m \mathcal{E}\varphi(\,\cdot\,,t)\rangle_{\R^n} 
\\= 
	OK(t)+\sum_{j=1}^{m}\sum_{\abs\beta=m}
	\sum_{\substack{\abs\gamma=j\\\gamma_\dmn=0}}
	\frac{1}{(m-j)!}
	\int_{\R^n}
	\partial_\pureH^\gamma \mathcal{Q}_t^m \psi_j(x) \,\overline{A_{\gamma\beta}(x)\,\partial^\beta v(x,t)}\,dx
.\end{multline*}
where the term $OK$ satisfies
\begin{equation*}
\int_0^\infty \abs{OK(t)}\,dt\leq C \doublebar{\nabla_\pureH \Tr_{m-1}^+\varphi}_{L^{p'}(\R^n)}\doublebar{\mathcal{A}_2(t\,\nabla^m v)}_{L^p(\R^n)}.\end{equation*}
This completes the proof.
\end{proof}

\begin{lem}
\label{B:lem:Neumann:regularity}
The bounds \eqref{B:eqn:w:absolute} and \eqref{B:eqn:Neumann:regular} are valid.

Furthermore, let $\psi_j(x)=\varphi_{m-j-1}(x)=\partial_\dmn^{m-j-1}\varphi(x,0)$, so \begin{equation*}\frac{1}{C}\doublebar{\Tr_{m-1}^+\varphi}_{L^{p'}(\R^n)} \leq\sum_{j=0}^{m-1}\doublebar{\nabla_\pureH^j\psi_j}_{L^{p'}(\R^n)}\leq C \doublebar{\Tr_{m-1}^+\varphi}_{L^{p'}(\R^n)}.\end{equation*}
Then for any $0<\varepsilon<T$ we have that
\begin{multline*}\int_\varepsilon^T \langle \mat A\nabla^m w(\,\cdot\,,t), \nabla^m \mathcal{E}\varphi(\,\cdot\,,t)\rangle_{\R^n}\,dt
-
OK_{\varepsilon,T}(w,\varphi)
\\ =	-\sum_{j=0}^{m-1}\sum_{\abs\beta=m}
	\sum_{\substack{\abs\gamma=j\\\gamma_\dmn=0}}
	\int_\varepsilon^T \int_{\R^n}
	\partial_\pureH^\gamma \mathcal{Q}^{m}_t  \psi_j(x)
	\,\overline{A_{\gamma\beta}(x)\,\partial^\beta \partial_t w(x,t)}\,dx\,dt
\end{multline*}
for some term $OK_{\varepsilon,T}(w,\varphi)$ that satisfies the bound
\begin{align*}
\abs{OK_{\varepsilon,T}(w,\varphi)}
&\leq
C \doublebar{\Tr_{m-1}^+\varphi}_{L^{p'}(\R^n)} \doublebar{\mathcal{A}_2(t\,\nabla^m \partial_t w)}_{L^p(\R^n)}
\\&\quad+C \doublebar{\Tr_{m-1}^+\varphi}_{L^{p'}(\R^n)} \sup_{\tau>0} \doublebar{\mathcal{A}_2(\1_{\tau/2<t<3\tau/2} \nabla^m w)}_{L^p(\R^n)}
.\end{align*}
\end{lem}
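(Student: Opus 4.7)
The plan is to adapt the proof of Lemma~\ref{B:lem:Neumann:Dirichlet}, adding an integration by parts in $t$ that shifts a $\partial_t$ derivative off $\nabla^m\mathcal{E}\varphi$ and onto $w$, granting access to the hypothesis on $\mathcal{A}_2(t\nabla^m\partial_t w)$ in place of the missing $\mathcal{A}_2(t\nabla^m w)$ bound. This new IBP step is forced because, with the indexing $\psi_j=\varphi_{m-j-1}$, the product-rule expansion of $\partial^\alpha\mathcal{E}\varphi$ produces terms $c_{\ell,\gamma,j}\,t^{\ell-j-1}\partial_\pureH^\gamma\partial_t^{\ell-\abs\gamma}\mathcal{Q}_t^m\psi_j$ carrying one fewer power of $t$ than the analogous terms in Lemma~\ref{B:lem:Neumann:Dirichlet}, so that Lemma~\ref{B:lem:semigroup:area}~\eqref{B:eqn:Q:area} no longer controls the associated Lusin norms when paired directly with $\nabla^m w$.

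I would carry out the expansion term by term and, for each term with $\ell>\abs\gamma$, apply the identity
\[
t^{\ell-j-1}\partial_\pureH^\gamma\partial_t^{\ell-\abs\gamma}\mathcal{Q}_t^m\psi_j
=\partial_t\bigl[t^{\ell-j-1}\partial_\pureH^\gamma\partial_t^{\ell-\abs\gamma-1}\mathcal{Q}_t^m\psi_j\bigr]
-(\ell-j-1)\,t^{\ell-j-2}\partial_\pureH^\gamma\partial_t^{\ell-\abs\gamma-1}\mathcal{Q}_t^m\psi_j
\]
under the $t$-integral against $\partial^\beta w$. The $\partial_t[\,\cdot\,]$ factor, integrated by parts against $\partial^\beta w$, yields (i) a boundary contribution at $t\in\{\varepsilon,T\}$, (ii) a middle integral pairing $t^{\ell-j-1}\partial_\pureH^\gamma\partial_t^{\ell-\abs\gamma-1}\mathcal{Q}_t^m\psi_j$ with $\partial^\beta\partial_t w$, and (iii) a lower-order integral of the same form with $\ell$ replaced by $\ell-1$, on which I iterate. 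For the $\ell=\abs\gamma$ terms (no $\partial_t$ on $\mathcal{Q}_t^m$ to shift), I use the auxiliary IBP $t^a=\partial_t(t^{a+1}/(a+1))$, which generates additional case-A terms that feed back into the main procedure. The middle terms (ii) are absorbed into $OK_{\varepsilon,T}$ via the area-integral duality~\eqref{B:eqn:area:2} together with Lemma~\ref{B:lem:semigroup:area}~\eqref{B:eqn:Q:area} and the assumed bound on $\mathcal{A}_2(t\nabla^m\partial_t w)$; the boundary contributions (i) are controlled uniformly in $\varepsilon,T$ by Lemma~\ref{B:lem:Neumann:slices} applied with $u=w$, invoking the remark after that lemma that converts the uniform slice hypothesis on $\nabla^m w$ into a uniform bound on $\tau^{-1}\doublebar{\mathcal{A}_2(\1_{\tau/2<t<3\tau/2}\,t\nabla^m w)}_{L^p(\R^n)}$. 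This is the source of the $C_0$ term in the stated $OK_{\varepsilon,T}$ estimate.

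Once the iteration terminates, for $\abs\gamma=j$ the only surviving non-$OK$ residual is the middle term produced at $\ell=j+1$, namely $-\int\partial_\pureH^\gamma\mathcal{Q}_t^m\psi_j\,\overline{A_{\gamma\beta}\partial^\beta\partial_t w}\,dx\,dt$; careful sign-tracking shows that the contributions from the various starting values $\ell=j+r$ ($r=1,\dots,m-j$) combine to give a target coefficient of
\[
\sum_{r=1}^{m-j}(-1)^r\binom{m-j}{r}=-1
\]
by the binomial theorem, matching the stated formula. The absolute integrability~\eqref{B:eqn:w:absolute} follows from summing the resulting term-by-term estimates, with the $t\to 0^+$ behavior handled by the slice bounds and the $t\to\infty$ tail by the rapid decay of $\nabla^m\mathcal{E}\varphi$ coming from compact support of $\varphi$. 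Finally, \eqref{B:eqn:Neumann:regular} follows because any other smooth compactly supported extension $\Psi$ of $\Tr_{m-1}\varphi$ differs from $\mathcal{E}\varphi$ by a function of zero boundary trace that can be approximated in $\dot W^2_m(\R^\dmn_+)$ by functions compactly supported in $\R^\dmn_+$, on which the integrand vanishes by $Lw=0$. The main obstacles are the coefficient bookkeeping through the iterated IBP, the delicate handling of the $\ell=\abs\gamma>j$ terms without circular reasoning, and securing the uniform-in-$(\varepsilon,T)$ estimate on the boundary contributions.
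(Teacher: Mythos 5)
Your overall strategy is on the right track: you correctly identify that, with $\psi_j=\varphi_{m-j-1}$, the expansion of $\nabla^m\mathcal{E}\varphi$ loses one power of $t$ relative to Lemma~\ref{B:lem:Neumann:Dirichlet}, so an extra integration by parts in $t$ is needed to access the hypothesis on $\mathcal{A}_2(t\nabla^m\partial_t w)$. Your treatment of the ``case~A'' terms ($\ell>\abs\gamma$), and of the boundary contributions via Lemma~\ref{B:lem:Neumann:slices} and the uniform slice bound, also matches the paper in spirit. However, your treatment of the $\ell=\abs\gamma$ terms is circular. Write $T_{\abs\gamma}=\int t^{\abs\gamma-j-1}\partial_\pureH^\gamma\mathcal{Q}_t^m\psi_j\cdot\overline{A_{\gamma\beta}\partial^\beta w}$ and $T_{\abs\gamma+1}=\int t^{\abs\gamma-j}\partial_\pureH^\gamma\partial_t\mathcal{Q}_t^m\psi_j\cdot\overline{A_{\gamma\beta}\partial^\beta w}$. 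Your auxiliary IBP yields $T_{\abs\gamma}=B'-\frac{1}{\abs\gamma-j}(T_{\abs\gamma+1}+M')$, and ``feeding back'' $T_{\abs\gamma+1}$ into your case-A machinery with $\ell=\abs\gamma+1$ (so the identity's coefficient is $\ell-j-1=\abs\gamma-j$) yields $T_{\abs\gamma+1}=B-M-(\abs\gamma-j)T_{\abs\gamma}$. Substituting, the factors $\abs\gamma-j$ and $1/(\abs\gamma-j)$ cancel exactly, the two $T_{\abs\gamma}$ terms coincide, and one is left with the trivial identity $0=B'-\frac{1}{\abs\gamma-j}(B-M+M')$; nothing about $T_{\abs\gamma}$ has been determined. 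This is not a bookkeeping detail but a genuine gap, and it affects exactly the terms that matter most, namely the purely horizontal $\alpha_\dmn=0$ contributions (where $\abs\gamma=m>j$ always).

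The missing ingredient is the one step in the paper's proof that you never invoke: after the auxiliary IBP in $t$, the troublesome term
$\int\frac{t^{m-j}}{(m-j)!}\langle\nabla^m_\pureH\partial_t\mathcal{Q}_t^m\psi_j,\overline{\mat A\nabla^m w}\rangle_{\R^n}\,dt$
is handled by an integration by parts in the \emph{spatial} variables together with $Lw=0$, which rewrites $\langle\nabla^m_\pureH\Phi,\overline{\mat A\nabla^m w}\rangle_{\R^n}$ as a sum $\sum_{\gamma}\pm\langle\partial_\pureH^\gamma\Phi,\overline{A_{\alpha\beta}\partial^\beta\partial_t^{m-\abs\gamma}w}\rangle_{\R^n}$. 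After this rewriting, Caccioppoli converts $\mathcal{A}_2(t^{m-\abs\gamma}\nabla^m\partial_t^{m-\abs\gamma}w)$ into $\mathcal{A}_2(t\nabla^m\partial_t w)$ and everything closes. No purely $t$-variable manipulation can replace this step: the expression $\int t^{m-j}\partial_t\nabla^m_\pureH\mathcal{Q}_t^m\psi_j\cdot\nabla^m w$ simply has too few powers of $t$ (relative to the number of $t$-derivatives on $\mathcal{Q}_t^m$) for \eqref{B:eqn:Q:area} to apply when paired with $\nabla^m w$ instead of $\nabla^m\partial_t w$. For the $\alpha_\dmn>0$ terms the paper sidesteps the issue entirely by performing the $t$-IBP (moving $\partial_t$ from $\partial^\alpha$ onto $w$) \emph{before} the product-rule expansion, so that no $t^{\abs\gamma-j-1}\partial_\pureH^\gamma\mathcal{Q}_t^m\psi_j\cdot\nabla^m w$ term ever arises. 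Finally, two smaller imprecisions: the $t\to\infty$ tail in \eqref{B:eqn:w:absolute} does not come from ``rapid decay of $\nabla^m\mathcal{E}\varphi$ from compact support'' (it does not decay pointwise) but from the $L^{p'}$--$L^r$ off-diagonal gain in \eqref{B:eqn:Q:Lp} with $r<p'$; and for \eqref{B:eqn:Neumann:regular}, one cannot simply approximate $\Psi-\mathcal{E}\varphi$ in $\dot W^2_m(\R^\dmn_+)$ since $w\notin\dot W^2_m(\R^\dmn_+)$; the paper instead compares $\Psi$ with a compactly supported truncation $\eta_R\mathcal{E}\varphi$ and uses the absolute convergence \eqref{B:eqn:w:absolute} to pass to the limit $R\to\infty$.
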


\begin{proof}
We begin with the bound \eqref{B:eqn:w:absolute}.
Observe that
\begin{align*}\nabla^m\mathcal{E}\varphi(x,t) &=\sum_{j=0}^{m-1}\nabla^m\biggl(\frac{t^{m-j-1}\mathcal{Q}^m_t \psi_j(x)}{(m-j-1)!} \biggr)
=
\sum_{j=0}^{m-1}
\sum_{\ell=j+1}^{m} 
C_{m,j,\ell} t^{\ell-j-1}\nabla^\ell\mathcal{Q}^m_t \psi_j(x)
.\end{align*}
By Lemma~\ref{B:lem:Neumann:slices} and the following remarks, if $1\leq r\leq p'$ then
\begin{multline*}
\int_{\R^n}\abs[big]{\langle \mat A\nabla^m w(x,t), \nabla^m \mathcal{E}\varphi(x,t)\rangle}\,dx
\\\leq
	C 
	\smash{\sum_{j=0}^{m-1}}\min\bigl(\doublebar{\nabla_\pureH^{j+1} \psi_j}_{L^{p'}(\R^n)}, t^{n/p'-n/r-1}\doublebar{\nabla_{\pureH}^j\psi_j}_{L^r(\R^n)}	\bigr)
	\\\times
	\sup_{\tau>0}\biggl(\int_{\R^n} \biggl(\fint_{B((x,\tau),\tau/2)} \abs{\nabla^m w}^2\biggr)^{p/2}\biggr)^{1/p}
.\end{multline*}
By assumption the term on the last line is finite, and so if $\varphi$ and thus $\psi_j$ is smooth and compactly supported, the bound \eqref{B:eqn:w:absolute} is valid. As an immediate corollary, $\langle \Tr_{m-1}^+\varphi,\M_{\mat A}^+ w\rangle_{\R^n}
=\langle \nabla^m \mathcal{E}\varphi,\mat A\nabla^m w\rangle_{\R^\dmn_+}$ for all $\varphi\in C^\infty_0(\R^\dmn)$.

We now turn to formula~\eqref{B:eqn:Neumann:regular}.
We seek to show that if $\varphi\in C^\infty_0(\R^\dmn)$, then
\begin{equation*}
\langle \nabla^m \varphi,\mat A\nabla^m w\rangle_{\R^\dmn_+}
=\langle \Tr_{m-1}^+\varphi,\M_{\mat A}^+ w\rangle_{\R^n}
=\langle \nabla^m \mathcal{E}\varphi,\mat A\nabla^m w\rangle_{\R^\dmn_+}
.\end{equation*}
Let $\eta_R(t)=\eta(t/R)$, where $\eta$ is smooth, supported in $B(0,2)$ and equal to $1$ in $B(0,1)$. An argument using the bound  \eqref{B:eqn:w:absolute} shows that as $R\to\infty$,
\begin{equation*}\langle \nabla^m (\eta_R \mathcal{E}\varphi),\mat A\nabla^m w\rangle_{\R^\dmn_+}\to \langle \nabla^m \mathcal{E}\varphi,\mat A\nabla^m w\rangle_{\R^\dmn_+}
=\langle \Tr_{m-1}^+\varphi,\M_{\mat A}^+ w\rangle_{\R^n}.
\end{equation*}
But by Lemma~\ref{B:lem:slices}, $\nabla^m w\in L^1_{loc}(\R^\dmn_+)$, and so if $\varphi$ is compactly supported, then by the weak formulation~\eqref{B:eqn:L} of $Lw=0$ we have that $\langle \nabla^m \varphi,\mat A\nabla^m w\rangle_{\R^\dmn_+}$ depends only on $\Tr_{m-1}^+\varphi$. Thus, if $\varphi$ is compactly supported then 
\begin{equation*}\langle \nabla^m \varphi,\mat A\nabla^m w\rangle_{\R^\dmn_+}=\langle \nabla^m (\eta_R \mathcal{E}\varphi),\mat A\nabla^m w\rangle_{\R^\dmn_+}\end{equation*}
for all $R$ large enough, and so formula~\eqref{B:eqn:Neumann:regular} is valid.

Finally, we come to the formula involving $\psi_j$.
Now, observe that
\begin{multline*}
\int_\varepsilon^T \langle \mat A\nabla^m u(\,\cdot\,,t), \nabla^m \mathcal{E}\varphi(\,\cdot\,,t)\rangle_{\R^n}\,dt
\\= 
	\sum_{j=0}^{m-1}\sum_{\substack{\abs\alpha=m \\ \abs\beta=m}}
	\int_\varepsilon^T\int_{\R^n}\partial^\alpha \biggl(\frac{t^{m-j-1}}{(m-j-1)!} \mathcal{Q}^m_t \psi_j(x)\biggr) \,\overline{A_{\alpha\beta}(x)\,\partial^\beta w(x,t)}\,dx\,dt
.\end{multline*}
We wish to bound terms on the right-hand side.

We begin with terms for which  $\alpha_\dmn>0$. Let $\alpha=\gamma+\vec e_\dmn$. We have that
\begin{multline*}
	\int_\varepsilon^T\int_{\R^n}\partial^\alpha \bigl({t^{m-j-1}} \mathcal{Q}^m_t \psi_j(x)\bigr) \,\overline{A_{\alpha\beta}(x)\,\partial^\beta w(x,t)}\,dx\,dt
\\\begin{aligned}
&= 
	\int_\varepsilon^T
	\int_{\R^n}\partial_t\partial^\gamma (t^{m-j-1} \mathcal{Q}^m_t \psi_j(x))\,\overline{A_{\gamma\beta}\,\partial^\beta w(x,t)}\,dx\,dt
.\end{aligned}\end{multline*}
Integrating by parts in $t$, we see that
\begin{multline*}
	\int_\varepsilon^T\int_{\R^n}\partial^\alpha \bigl({t^{m-j-1}} \mathcal{Q}^m_t \psi_j(x)\bigr) \,\overline{A_{\alpha\beta}(x)\,\partial^\beta w(x,t)}\,dx\,dt
\\\begin{aligned}
&= 
	-
	\int_\varepsilon^T
	\int_{\R^n}\partial^\gamma  (t^{m-j-1} \mathcal{Q}^m_t \psi_j(x))\,\overline{A_{\gamma\beta}\,\partial^\beta \partial_tw(x,t)}\,dx\,dt
	\\&\qquad
	+\int_{\R^n}\partial^\gamma  (T^{m-j-1} \mathcal{Q}^m_T \psi_j(x))\,\overline{A_{\gamma\beta}\,\partial^\beta w(x,T)}\,dx
	\\&\qquad
	-\int_{\R^n}\partial^\gamma (\varepsilon^{m-j-1} \mathcal{Q}^m_\varepsilon \psi_j(x))\,\overline{A_{\gamma\beta}\,\partial^\beta w(x,\varepsilon)}\,dx
.\end{aligned}\end{multline*}
By Lemma~\ref{B:lem:Neumann:slices}, the second and third terms have norm at most
\begin{multline*}
C\doublebar{\nabla_\pureH^j\psi_j}_{L^{p'}(\R^n)} \frac{1}{T}\doublebar{\mathcal{A}_2(t \1_{T/2<t<2T} \nabla^m w)}_{L^p(\R^n)}
\\+C\doublebar{\nabla_\pureH^j\psi_j}_{L^{p'}(\R^n)} \frac{1}{\varepsilon}\doublebar{\mathcal{A}_2(t \1_{\varepsilon/2<t<3\varepsilon/2} \nabla^m w)}_{L^p(\R^n)}
\end{multline*}
and thus satisfy our desired bounds.

We turn to the first term.
Applying the product rule, we have that
\begin{multline*}
\int_{\varepsilon}^T\int_{\R^n} \partial_\pureH^\gamma \partial_t^{m-\abs\gamma-1} (t^{m-j-1} \mathcal{Q}^m_t \psi_j(x))\,\overline{A_{\gamma\beta}(x)\,\partial^\beta \partial_t w(x,t)}\,dx\,dt
\\\begin{aligned}
&= 
	\sum_{\mathclap{\ell=\max(\abs\gamma,j)}}^{m-1}
	C_{m,j,\abs\gamma,\ell}
	\int_{\varepsilon}^T\int_{\R^n}
	 t^{\ell-j}
	\partial_t^{\ell-\abs\gamma}  \partial_\pureH^\gamma \mathcal{Q}^m_t \psi_j(x)
	\,\overline{A_{\gamma\beta}(x)\,\partial^\beta \partial_t w(x,t)}\,dx\,dt
.\end{aligned}\end{multline*}
We remark that if $\ell=j=\abs\gamma$, then $C_{m,j,\abs\gamma,\ell}=(m-j-1)!$. Recall that these terms are the terms that appear explicitly in the statement of this lemma, and so we will bound them later in this section.

If $\ell>j$ or $\ell>\gamma$, then by the bounds \eqref{B:eqn:area:2} and~\eqref{B:eqn:Q:area},
\begin{multline*}\int_0^\infty \int_{\R^n}
	 \abs[big]{t^{\ell-j}
	\partial_t^{\ell-\abs\gamma}  \partial_\pureH^\gamma \mathcal{Q}^m_t \psi_j(x)
	\,\overline{A_{\gamma\beta}(x)\,\partial^\beta \partial_t w(x,t)}}\,dx\,dt
\\\begin{aligned}
&\leq
C\int_{\R^n}\mathcal{A}_2(
	 t^{\ell-j}
	\partial_t^{\ell-\abs\gamma}  \partial_\pureH^\gamma \mathcal{Q}^m_t \psi_j)\,
	\mathcal{A}_2(t\,\partial^\beta \partial_t w)\,dx
\\&\leq
	C\doublebar{\nabla_\pureH^j \psi_j}_{L^{p'}(\R^n)}\doublebar{\mathcal{A}_2(t\,\nabla^m \partial_t w)}_{L^p(\R^n)}
\end{aligned}\end{multline*}
as desired.

We now consider the terms with $\alpha_\dmn=0$; we may write these terms as
\begin{multline*}
\sum_{\abs\beta=m}
	\sum_{\substack{\abs\alpha=m\\\alpha_\dmn=0}}
	\int_{\varepsilon}^T\int_{\R^n} \frac{t^{m-j-1}}{(m-j-1)!}\, \partial_\pureH^\alpha  \mathcal{Q}^m_t \psi_j(x)\,\overline{A_{\alpha\beta}(x)\,\partial^\beta w(x,t)}\,dx\,dt
\\=
\sum_{j=0}^{m-1} \int_\varepsilon^T
\frac{t^{m-j-1}}{(m-j-1)!}
\langle \nabla^m_{\pureH}\mathcal{Q}^m_t \psi_j(x),\overline{\mat A\nabla^m w(\,\cdot\,,t)}\rangle_{\R^n}\,dt.\end{multline*}
We again integrate by parts in~$t$ and see that
\begin{multline*}
\int_\varepsilon^T
\frac{t^{m-j-1}}{(m-j-1)!}
\langle \nabla^m_{\pureH}\mathcal{Q}^m_t \psi_j(x),\overline{\mat A\nabla^m w(\,\cdot\,,t)}\rangle_{\R^n}\,dt
\\\begin{aligned}
&=
	-\int_\varepsilon^T
	\frac{t^{m-j}}{(m-j)!}
	\frac{\partial}{\partial t}\langle \nabla^m_{\pureH}\mathcal{Q}^m_t \psi_j(x),\overline{\mat A\nabla^m w(\,\cdot\,,t)}\rangle_{\R^n}\,dt
	\\&\qquad 
	+\frac{T^{m-j}}{(m-j)!}
	\langle \nabla^m_{\pureH}\mathcal{Q}^m_T \psi_j(x),\overline{\mat A\nabla^m w(\,\cdot\,,T)}\rangle_{\R^n}
	\\&\qquad 
	-\frac{\varepsilon^{m-j}}{(m-j)!}
	\langle \nabla^m_{\pureH}\mathcal{Q}^m_\varepsilon \psi_j(x),\overline{\mat A\nabla^m w(\,\cdot\,,\varepsilon)}\rangle_{\R^n}
\end{aligned}\end{multline*}
We may bound the last two terms as before. If $0\leq j\leq m-1$,  then
\begin{multline*}
-\int_\varepsilon^T
	\frac{t^{m-j}}{(m-j)!}
	\frac{\partial}{\partial t}\langle \nabla^m_{\pureH}\mathcal{Q}^m_t \psi_j(x),\overline{\mat A\nabla^m w(\,\cdot\,,t)}\rangle_{\R^n}\,dt
\\\begin{aligned}
&=
	-\int_\varepsilon^T
	\frac{t^{m-j}}{(m-j)!}
	\langle \nabla^m_{\pureH}\partial_t\mathcal{Q}^m_t \psi_j(x),\overline{\mat A\nabla^m w(\,\cdot\,,t)}\rangle_{\R^n}\,dt
	\\&\qquad
	-\int_\varepsilon^T
	\frac{t^{m-j}}{(m-j)!}
	\langle \nabla^m_{\pureH}\mathcal{Q}^m_t \psi_j(x),\overline{\mat A\nabla^m \partial_t w(\,\cdot\,,t)}\rangle_{\R^n}\,dt
.\end{aligned}\end{multline*}
As before, the second term may be controlled by the bounds~\eqref{B:eqn:area:2} and~\eqref{B:eqn:Q:area}.
To control the first term, we integrate by parts in~$x$ and use  the fact that $Lw=0$. Then
\begin{multline*}
\langle \nabla^m_{\pureH}\partial_t\mathcal{Q}^m_t \psi_j(x),\overline{\mat A\nabla^m w(\,\cdot\,,t)}\rangle_{\R^n}
\\\begin{aligned}
&=
	\sum_{\substack{\abs\alpha=m\\\alpha_\dmn=0}}
	\sum_{\abs\beta=m}
	\langle \partial^\alpha\partial_t\mathcal{Q}^m_t \psi_j(x),\overline{ A_{\alpha\beta}\partial^\beta w(\,\cdot\,,t)}\rangle_{\R^n}
\\&=
	\sum_{\substack{\abs\gamma\leq m-1\\\gamma_\dmn=0}}
	\sum_{\abs\beta=m}(-1)^{m+\abs\gamma+1}
	\langle \partial_\pureH^\gamma\partial_t\mathcal{Q}^m_t \psi_j(x),\overline{ A_{\alpha\beta}\partial^\beta\partial_t^{m-\abs\gamma} w(\,\cdot\,,t)}\rangle_{\R^n}
.\end{aligned}\end{multline*}
Thus, by formula~\eqref{B:eqn:area:2}, 
{\multlinegap=0pt\begin{multline*}
\abs[bigg]{\int_\varepsilon^T
	\frac{t^{m-j}}{(m-j)!}
	\langle \nabla^m_{\pureH}\partial_t\mathcal{Q}^m_t \psi_j(x),\overline{\mat A\nabla^m w(\,\cdot\,,t)}\rangle_{\R^n}\,dt}
\\\leq
C\sum_{\substack{\abs\gamma\leq m-1\\\gamma_\dmn=0}} \doublebar{\mathcal{A}_2^{1/2}(t^{\abs\gamma-j+1} \partial_\pureH^\gamma \partial_t \mathcal{Q}^{m}_t  \psi_j)}_{L^{p'}(\R^n)} \doublebar{\mathcal{A}_2^{1/2}(t^{m-\abs\gamma}\nabla^m\partial_t^{m-\abs\gamma}w)}_{L^{p'}(\R^n)}.\end{multline*}}%
By the Caccioppoli inequality, the second term is at most $C\doublebar{\mathcal{A}_2(t\,\nabla^m \partial_t w)}_{L^{p'}(\R^n)}$. 
By the bound~\eqref{B:eqn:Q:area}, the first term is at most $C\doublebar{\nabla_\pureH^j \psi_j}_{L^{p'}(\R^n)}$, as desired.

Assembling our estimates, we see that
{\multlinegap=0pt\begin{multline*}
\int_\varepsilon^T \langle \mat A\nabla^m u(\,\cdot\,,t), \nabla^m \mathcal{E}\varphi(\,\cdot\,,t)\rangle_{\R^n}\,dt
\\\begin{aligned}
&= 
	-\sum_{j=0}^{m-1}\sum_{\abs\beta=m}
	\sum_{\substack{\abs\gamma=j\\\gamma_\dmn=0}}
	\frac{1}{(m-j-1)!}
	\int_{\R^n}\int_0^\infty 
	\partial_\pureH^\gamma \mathcal{Q}^{m}_t  \psi_j(x)
	\,\overline{A_{\gamma\beta}(x)\,\partial^\beta \partial_t w(x,t)}\,dt\,dx
	\\&\qquad+OK_{\varepsilon,T}(w,\varphi)
\end{aligned}\end{multline*}}%
as desired.
\end{proof}

To complete the proof of Theorems~\ref{B:thm:Neumann:1} and~\ref{B:thm:Neumann:2}, we must bound terms of the form
\begin{equation*}
	\int_\varepsilon^T
	\sum_{\abs\beta=m}
	\sum_{\substack{\abs\gamma=j\\\gamma_\dmn=0}}
	\int_{\R^n}
	\partial_\pureH^\gamma \mathcal{Q}^{m}_t  \psi_j(x)
	\,\overline{A_{\gamma\beta}(x)\,\partial^\beta u(x,t)}\,dx\,dt.\end{equation*}
for $0\leq j\leq m$, where $u=v$ or $u=\partial_t w$.

Choose some $j$ with $0\leq j\leq m$.
As usual, we integrate by parts in~$t$. If $\ell\geq 0$ is an integer, then
\begin{multline*}
\int_\varepsilon^T  \int_{\R^n}
	\partial_\pureH^\gamma \mathcal{Q}^{m}_t  \psi_j(x)
	\,\overline{A_{\gamma\beta}(x)\,\partial^\beta \partial_t^\ell u(x,t)}\,t^\ell\,dx\,dt
\\\begin{aligned}
&=
	-\frac{1}{\ell+1}\int_\varepsilon^T  \int_{\R^n}
	\partial_\pureH^\gamma \mathcal{Q}^{m}_t  \psi_j(x)
	\,\overline{A_{\gamma\beta}(x)\,\partial^\beta\partial_t^{\ell+1} u(x,t)}\,t^{\ell+1}\,dx\,dt
	\\&\qquad
	-\frac{1}{\ell+1}\int_\varepsilon^T  \int_{\R^n}
	\partial_\pureH^\gamma \partial_t\mathcal{Q}^{m}_t  \psi_j(x)
	\,\overline{A_{\gamma\beta}(x)\,\partial^\beta\partial_t^{\ell} u(x,t)}\,t^{\ell+1}\,dx\,dt
	\\&\qquad 
	+
	\frac{1}{\ell+1}\int_{\R^n}
	\partial_\pureH^\gamma \mathcal{Q}^{m}_T  \psi_j(x)
	\,\overline{A_{\gamma\beta}(x)\,\partial^\beta\partial_T^{\ell} u(x,T)}\,T^{\ell+1}\,dx
	\\&\qquad 
	-
	\frac{1}{\ell+1}\int_{\R^n}
	\partial_\pureH^\gamma \mathcal{Q}^{m}_\varepsilon  \psi_j(x)
	\,\overline{A_{\gamma\beta}(x)\, \partial^\beta\partial_\varepsilon^{\ell} u(x,\varepsilon)}\,\varepsilon^{\ell+1}\,dx
.\end{aligned}\end{multline*}
The second integral may be controlled by the bounds~\eqref{B:eqn:area:2} and~\eqref{B:eqn:Q:area} as usual. By Lemma~\ref{B:lem:Neumann:slices}, the last integral has norm at most
\begin{equation*}C\doublebar{\nabla_\pureH^j\psi_j}_{L^{p'}(\R^n)} \doublebar{\mathcal{A}_2(\1_{\varepsilon/2<t<3\varepsilon/2}t\nabla^m u)}_{L^p(\R^n)}\end{equation*}
so is uniformly bounded and approaches zero as $\varepsilon\to 0$. Similarly, the third integral is uniformly bounded and approaches zero as $T\to \infty$.

By induction,
\begin{multline*}
\int_{\R^n}\int_0^\infty 
	\partial_\pureH^\gamma \mathcal{Q}^{m}_t \psi_{j}(x)\,\overline{A_{\gamma\beta}(x)\,\partial^\beta u(x,t)}\,dt\,dx
\\\begin{aligned}
&=
	\frac{1}{(2k)!}\int_{\R^n}\int_0^\infty 
	\partial_\pureH^\gamma \mathcal{Q}^{m}_t \psi_{j}(x)\,\overline{A_{\gamma\beta}(x)\,\partial^\beta \partial_t^{2k} u(x,t)}\,t^{2k}\,dt\,dx
	+OK
\end{aligned}\end{multline*}
for any integer $k\geq 0$, where the term $OK$ is uniformly bounded and approaches a limit as $\varepsilon\to 0^+$ and $T\to\infty$. We have that
\begin{multline*}
	\smash{\sum_{\abs\beta=m}
	\sum_{\substack{\abs\gamma=j\\\gamma_\dmn=0}}} \int_{\R^n}\int_0^\infty 
	\partial_\pureH^\gamma \mathcal{Q}^{m}_t \psi_{j}(x)\,\overline{A_{\gamma\beta}(x)\,\partial^\beta \partial_t^{2k} u(x,t)}\,t^{2k}\,dt\,dx
\\\begin{aligned}
&=
	\int_0^\infty
	\langle \nabla^m \partial_t^{2k} u(\,\cdot\,,t),
	\mat A^*_{mj}\nabla_\pureH^j \mathcal{Q}^{m}_{t} \psi_{j}\rangle_{\R^n}
	t^{2k}\,dt
\end{aligned}\end{multline*}
where $\mat A^*_{mj}$ is the matrix that satisfies
\begin{equation*}(\mat A^*_{mj} \arr \psi)_\beta=\sum_{\substack{\abs\gamma=j\\\gamma_\dmn=0}} A^*_{\beta\gamma} \psi_\gamma \qquad\text{for any $\abs\beta=m$}.\end{equation*}
By Lemma~\ref{B:lem:control:lusin}, if $k$ is large enough then
\begin{multline*}
\int_0^\infty
	t^{2k} \abs{\langle \mat A^*_{mj}\nabla_\pureH^j \mathcal{Q}^{m}_{t} \psi_{j} ,\nabla^{m}\partial_t^{2k} u(\,\cdot\,,t)\rangle_{\R^n}}\,dt
\\\leq
C_{k}
\int_{4/3}^{4}
	\int_{\R^n}
	\mathcal{A}_2^-(\abs{t}^{k-2m+1}
		\partial_\dmn^{k-m} \s^{L^*}_\nabla(\mat A^*_{mj}\nabla_\pureH^j \mathcal{Q}^{m}_{\abs{t}r} \psi_{j}) )(x)
	\,\mathcal{A}_2^+(t\,{\nabla^m u})(x)
	\,dx
	\,dr 
.\end{multline*}
Define
\begin{equation*}R_t^r \arr\psi(z)
=
t^{k-2m+1}{ \partial_\dmn^{k-m}\s^{L^*}_\nabla (\mat A^*_{mj}\mathcal{Q}^m_{tr} \arr \psi)(z,-t) }.
\end{equation*}
Observe that $\mathcal{P}_t=\mathcal{Q}_{tr}$ is also an approximate identity with a Schwartz kernel.
By the bound \eqref{B:eqn:S:Schwartz:p}, for any fixed $r$ with $2/3<r<8$ and any $p'$ with $1<p'<\infty$ we have 
$L^{p'}$ boundedness of $\psi\mapsto\mathcal{A}_2(R_t^r\arr\psi)$. 
Thus,
\begin{multline*}
\abs[bigg]{\int_0^\infty
	t^{2k}\langle \mat A^*_{mj}\nabla_\pureH^j \mathcal{Q}^{m}_{t} \psi_{j},\nabla^{m}\partial_\dmn^{2k} u(\,\cdot\,,t)\rangle_{\R^n}\,dt}
\\\leq 
	C\doublebar{\nabla_\pureH^j\psi_j}_{L^{p'}(\R^n)}
	\doublebar{\mathcal{A}_2(t\,\nabla^m u)}_{L^p(\R^n)}
\end{multline*}
as desired.


\newcommand{\etalchar}[1]{$^{#1}$}
\providecommand{\bysame}{\leavevmode\hbox to3em{\hrulefill}\thinspace}
\providecommand{\MR}{\relax\ifhmode\unskip\space\fi MR }
\providecommand{\MRhref}[2]{%
  \href{http://www.ams.org/mathscinet-getitem?mr=#1}{#2}
}
\providecommand{\href}[2]{#2}

\end{document}